\newcommand{\R}{{\mathbb R}}
\newcommand{\RR}{{\mathbb R}}
\newcommand{\EX}{{\mathbb E}}
\newcommand{\EE}{{\mathbb E}}
  \newcommand{\PX}{{\mathbb P}}
\newcommand{\PP}{{\mathbb P}}
\newcommand{\s}{\sigma}
\newcommand{\ts}{\tilde\sigma}
\newcommand{\e}{\varepsilon}
\newcommand{\ro}{\varrho}
\newcommand{\tro}{\tilde\varrho}
\renewcommand{\a}{\alpha}
\renewcommand{\b}{\beta}
\newcommand{\om}{\omega}
\newcommand{\Om}{\Omega}
\newcommand{\de}{\delta}
\newcommand{\la}{\lambda}
\newcommand{\hf}{\frac{1}{2}}
\newcommand{\cF}{{\cal F}}
\renewcommand{\cF}{\mathcal F}
\newcommand{\cL}{{\mathcal L}}
\newcommand{\cA}{{\mathcal A}}
\newcommand{\cB}{{\mathcal B}}
\newcommand{\cE}{{\mathcal E}}
\newcommand{\cI}{{\mathcal I}}
\newcommand{\cN}{{\mathcal N}}
\newcommand{\cR}{{\mathcal R}}
\newcommand{\HH}{{\mathcal H}}
\newcommand{\tW}{{\widetilde{W}}}
\numberwithin{equation}{section}
\newtheorem{theorem}{Theorem}[section]
\newtheorem{lemma}[theorem]{Lemma}
\newtheorem{remark}[theorem]{Remark}
\newtheorem{prop}[theorem]{Proposition}
\begin{document}

\title[ Stochastic 2D hydrodynamical systems]
{Stochastic 2D hydrodynamical  systems: \\ Wong-Zakai approximation
and Support theorem}

\author[I. Chueshov and A. Millet]
{Igor Chueshov and Annie Millet  }

\address[I.~Chueshov]
{Department of  Mechanics and Mathematics\\
Kharkov National University\\
4~Svobody Square\\
61077, Kharkov, Ukraine } 
\email[I. Chueshov]{chueshov@univer.kharkov.ua}

\address[A.~Millet]
{ SAMM,  EA 4543  Universit\'{e} Paris 1,
 Centre Pierre Mend\`{e}s France,
90 rue de Tolbiac, F- 75634 Paris Cedex 13, France {\it and}
Laboratoire  PMA (UMR 7599)
   } \email[A.
~Millet]{amillet@univ-paris1.fr {\it and} annie.millet@upmc.fr}

\subjclass[2010]{Primary 60H15; Secondary  60H30, 76D06, 76M35. }

\keywords{Hydrodynamical models,   MHD, B\'{e}nard convection, shell
models of turbulence, stochastic PDEs, Wong--Zakai approximation,
support theorem}

\begin{abstract}
We deal with a class of abstract nonlinear stochastic models
with multiplicative noise, which
covers many 2D hydrodynamical models including the  2D Navier-Stokes
equations, 2D MHD  models and 2D magnetic B\'{e}nard problems  as well as
some shell models of turbulence.
 Our main result  describes
the support of the  distribution of  solutions. Both inclusions are
proved by means of a general Wong--Zakai type result
 of convergence in probability for nonlinear stochastic PDEs driven
by a Hilbert-valued Brownian motion and some adapted finite dimensional
approximation of this process.
\end{abstract}

\maketitle

\section{Introduction}\label{s1}
Our goal in this paper is to continue the unified  investigation of
statistical properties of some stochastic   2D hydrodynamical models
which  started in our previous paper \cite{ch-mi}. The model
introduced there  covers a wide class of mathematical coupled models
from 2D fluid dynamics. This class includes the  2D Navier-Stokes
equations and also  some other classes of two dimensional
hydrodynamical models  such as the  magneto-hydrodynamic  equation,
the Boussinesq model for the  B\'{e}nard convection and the 2D
magnetic B\'{e}nard problem. We also cover the case of  regular
higher dimensional problems such as the 3D Leray $\alpha$-model for
the Navier-Stokes equations  and some shell models
 of turbulence. For details we refer to \cite[Sect.2.1]{ch-mi}.
\par
 Our unified
approach is based on an abstract stochastic evolution equation in
some Hilbert space of the form
\begin{equation} \label{abstr-1}
\partial_t u  + A u + B(u, u) +
R(u)=  \Xi(u)\, \dot{W},\quad u|_{t=0}=\xi,
\end{equation}
where $ \dot{W}$ is a multiplicative  noise white in
time with spatial correlation. The hypotheses concerning
the linear operator $A$, the bilinear mapping $B$
and the  operators $R$ and $\Xi$   are stated  below.
These hypotheses guarantee unique solvability of
problem \eqref{abstr-1}.
\par
 For general abstract stochastic
evolution equations in infinite dimensional spaces we  refer to
\cite{PZ92}. However the hypotheses in \cite{PZ92} do not cover our
hydrodynamical type model. We also note that  stochastic Navier-Stokes
equations were studied by many authors (see, e.g.,
\cite{CG94,FG95,MS02,VKF} and the references therein). In
\cite{ch-mi} we prove existence,   uniqueness and provide a priori
estimates
  for a  weak (variational) solution
to the  abstract problem of the form \eqref{abstr-1},
where the forcing term may also include a stochastic
control term with a multiplicative coefficient.
In all the concrete hydrodynamical examples mentioned
above, the diffusion coefficient may contain a small multiple of the
gradient of the solution. This result contains the
corresponding existence and uniqueness theorems and a priori bounds
for 2D Navier-Stokes equations,
the Boussinesq model of  the  B\'{e}nard convection, and
also for the GOY and Sabra shell models  of turbulence.
 Theorem~2.4 \cite{ch-mi} generalizes the existence result
for Boussinesq or  MHD equations  given in \cite{Ferrario} or \cite{BaDP}  to the case of
multiplicative noise (see also \cite{DM}) and also covers  new situations such as the 2D
magnetic B\'{e}nard problem   or  the 3D Leray $\alpha$-model.
Our main result  in \cite{ch-mi} is
a Wentzell-Freidlin type large
deviation principle (LDP) in an appropriate Polish space $X$
for stochastic equations of the form
\eqref{abstr-1} with   $\Xi^\e:=\sqrt{\e}\s$   as $\e \to 0$, which describes the exponential
rate
of convergence  of the  solution $u:=u^\e$ to the deterministic solution  $u^0$.
One of the key arguments is a time increment control which provides the
weak convergence needed in order to prove the large deviations principle.
 We refer to  \cite{ch-mi} for detailed discussion  and references.
\par
Another classical problem is that of approximation of solutions in terms of a simpler
model, where the stochastic integral is changed into  a "deterministic" one, replacing
the noise by  a random element of its reproducing kernel Hilbert space, such as a finite dimensional
space approximation of its piecewise linear interpolation on a time grid.  This is the celebrated
Wong-Zakai approximation of the solution and once more the lack of continuity of the
solution as a function of the noise has to be dealt with. This requires to make a drift correction
coming from the fact that  the It\^o integral is replaced by a
Stratonovich one. For finite-dimensional diffusion processes, 
this kind of
approximation  is well-known (see, e.g., \cite{IW},
\cite{protter}, \cite{SV},   \cite{WZ} and also the survey
\cite{twardo} and the references therein).
There is a substantial number of publications devoted to
 Wong--Zakai  approximations of infinite-dimensional
stochastic equations. 
For instance, in 
  \cite{gyongy}, and  \cite{gyongy-1}  I.~Gy\"ongy 
established   Wong-Zakai approximations of linear  parabolic evolution equations
satisfying a coercivity and stochastic parabolicity condition 
and subject to a random finite-dimensional
perturbation driven by a continuous martingale; 
some applications to filtering and
some stochastic dynamo models are given. Z.~Brzezniak, M.~Capi\'nsky and F.~Flandoli \cite{brzecapinfland}
 studied a similar
problem for a linear parabolic equation subject 
to an perturbation driven by an infinite dimensional
Gaussian noise. 
In \cite{brzefland} Z.~Brzezniak and F.~Flandoli and in \cite{gyongy-2} I. Gy\"ongy and A.~Shmatkov 
obtained  some more refined convergence,
either a.s. or with some rate of convergence.
  Let us also mention  the reference \cite{ChVui2004}  by I.~Chueshov and P.~Vuillermot 
which deals with  semilinear non-autonomous parabolic PDE systems 
perturbed by multiplicative noise and considers some  applications
to invariance of deterministic sets    
with respect to the corresponding evolutions
and the reference \cite{tessitorezab} by G.~Tessitore and J.~Zabczyk 
which studies Wong-Zakai type approximations of mild solutions 
to abstract semilinear  parabolic type  equations. 
Similar Wong Zakai approximations were proven in H\"older spaces 
 by A.~Millet and M.~Sanz-Sol\'e
in   \cite{M-SS1} 
for a semi-linear stochastic hyperbolic equation and 
 by V.~Bally, A.~Millet and M.~Sanz-Sol\'e  in \cite{BMSS}
for  the  one-dimensional heat equation with a multiplicative stochastic perturbation driven
by a space-time white noise. Similarly, in  \cite{CW-M}   C.~Cardon-Weber and A.~Millet  proved 
similar Wong-Zakai  approximation  results    in various topologies for
 the stochastic one-dimensional Burgers equation 
with a multiplicative perturbation driven by space-time white noise. In references
\cite{gyongy}, \cite{gyongy-1}, \cite{M-SS1}, \cite{BMSS} and \cite{CW-M}, 
these approximations were the main step to characterize the support of the distribution
of these stochastic evolution equations. 
\par 
Except for \cite{CW-M} which studies a toy model of turbulence and has a truly non-linear
feature, all the above papers require linear or Lipschitz assumptions on the coefficients 
which do not cover non-linear models such as the 
Navier-Stokes equations or general hydrodynamical models.
Some result on the  Wong-Zakai approximation for the  2D Navier--Stokes system
is proved by W.~Grecksch and B.~Schmalfuss  in \cite{GreSch95}, but for a  linear finite dimensional noise, 
which is a particular case of the framework used in this paper.
We also mention the paper of K.~Twardowska 
\cite{Twa-padova} which claims  the convergence
of Wong-Zakai  approximations for 2D Navier--Stokes system
with a rather general diffusion part.  However, the argument 
used in \cite{Twa-padova} is incomplete and we were not able 
to fill the gaps.
\par
 In the same spirit, a slightly more general  approximation result,
using adapted linear interpolation of the Gaussian noise,
 provides  a  description of the  support  for the abstract system
\eqref{abstr-1} and thus covers a wide class of hydrodynamical
models. This is the well-known  Stroock-Varadhan characterization of the
support of the distribution of the solution in the Polish space $X$
where it lives.
Note that the approach  used   in the present paper   
is different from the original one of D.W.~Stroock and S.R.S.~Varadhan \cite{SV},  and is 
 similar to that introduced in
\cite{M-SS1} and \cite{M-SS2}. Indeed, only one result of
convergence in probability provides both inclusions needed to
describe the support of the distribution. See also V.~Mackevi\v{c}ius \cite{Ma} as well as S.~Aida, S.~Kusuoka and
 D.~Stroock \cite{AKS}, where related results were obtained for
diffusion processes using a non-adapted linear time interpolation of
the noise. The technique proposed in \cite{Ma} was used in \cite{gyongy} and \cite{gyongy-1}
to characterize the support of the solution of stochastic quasi-linear parabolic evolution equations
in (weighted) Sobolev spaces. 
 The references \cite{BMSS},
\cite{CW-M} and the paper by T.~Nakayama  \cite{Nak}
 establish  a support theorem for the one-dimensional heat or Burgers
 equation,  and  for general mild solutions to  semi-linear abstract
parabolic equations  along the same line.  However, unlike these  references in a parabolic setting,
the argument used in this paper does not rely on the Green  
 function 
 associated with the second order differential operator   and deals with a nonlinear physical model.
 As in \cite{ch-mi}, the control equation is needed with some  control
defined in terms of both an element of the Reproducing Kernel
Hilbert Space of the driving Brownian motion
and an adapted linear finite-dimensional approximation of this Brownian.
For this class of control equations we first establish a  Wong-Zakai type approximation
theorem (see Theorem~\ref{th:WZ-appr}), which is the main step of our proof and, as we believe,
has an independent interest.
Note that all previous works were using intensively a time H\"older regularity of the solution of
either the diffusion or the evolution equation. Such a time regularity is out of reach
for the Navier-Stokes equations and the general hydrodynamical models we cover.
\par
A key ingredient of the proof of the main convergence  theorem
 is some "time integrated"  time increment which can be  obtained    with a better
speed of convergence to zero
than that needed in \cite{ch-mi}.
 To our best knowledge,   there is only one publication   related
to the support characterization  of solutions to 
2D hydrodynamical models.  
The short note \cite{Twa-gyor},  which states a characterization of the support
for 2D Navier-Stokes equations  with the Dirichlet boundary conditions,
 does not provide a detailed  proof  and refers to
\cite{Twa-padova} where the  argument is
incomplete.
\medskip 
\par The paper is
organized as follows. In Section \ref{s2} we recall the mathematical
model introduced in \cite{ch-mi}. In this section we also formulate
our abstract hypotheses. Our main results are stated
 in Section \ref{sec:support} under some additional integrability
property on the solution. We first  formulate the Wong-Zakai type
approximation Theorem~\ref{th:WZ-appr}, which is the main  tool to characterize
the support of the distribution of the solution to the stochastic
hydrodynamical equations. This characterization is given in Theorem~\ref{th:support}  and we show how
the support characterization can be deduced. In
Section~\ref{sect:prelim} we provide some preliminary step where the noise is truncated.
Section  \ref{pr-conv} contains the proof of  Theorem~\ref{th:WZ-appr}.
It heavily depends on the time increment
speed of convergence,  which is proved in Section \ref{time-in}. In
the appendix (see Section~\ref{appendix}) we discuss with details
the way our result can be applied  to different classes of
hydrodynamical models and give conditions which  ensure that the
solution fulfills
 the extra integrability assumption we have imposed 
 (see \eqref{crit-bound}).  

\section{Description of the model} \label{s2}

\subsection{Deterministic analog}
Let $(H, |.|)$ denote a separable Hilbert space, $A$ be an (unbounded) self-adjoint
positive linear operator on $H$. Set $V=Dom(A^{\frac{1}{2}})$.
For $v\in V$  set   $\|v\|= |A^{\frac{1}{2}} v|$.
Let $V'$ denote the dual of $V$ (with respect to the
inner product $(.,.)$ of $H$).
Thus we have the Gelfand  triple $V\subset H\subset V'$.
Let $\langle u,v\rangle $ denote the
duality between $u\in V$ and $v\in V'$
such that  $\langle u,v\rangle =(u,v)$ for $u\in V$, $v\in H$,
 and let $B : V\times V \to
V'$ be a  mapping satisfying the condition (\textbf{B})
given below.
\par
The goal of this paper is to study stochastic perturbations of the following
abstract model in $H$
\begin{equation} \label{u0}
\partial_t u(t)  +   A u(t) + B\big(u(t),u(t) \big) + R u(t) =  f,
\end{equation}
where   $R$ is a  continuous  operator in $H$.
We assume that the mapping $B : V\times V \to
V'$  satisfies the following  antisymmetry and bound conditions:
\medskip\par
\noindent \textbf{Condition (B):}{\it
\begin{enumerate}
  \item  $B : V\times V \to V'$ is a  bilinear continuous  mapping.
  \item For $u_i\in V$, $i=1,2,3$,
\begin{equation} \label{as}
 \langle B(u_1, u_2)\, ,\, u_3\rangle  = - \,\langle  B(u_1, u_3)\, ,\, u_2\rangle.
\end{equation}
  \item
 There exists a Banach (interpolation) space
${\mathcal H} $ possessing the properties\\
 (i) $V\subset
{\mathcal H}\subset H;$\\
 (ii) there exists a constant $a_0>0$ such that
\begin{equation} \label{interpol}
\|v\|_\HH^2 \leq a_0 |v|\, \|v\|\quad \mbox{for any $v\in V$};
\end{equation}
(iii)
for every $\eta >0$ there exists $C_\eta >0$
 such that
\begin{align} \label{boundB}
| \langle B(u_1, u_2)\,, \, u_3\rangle | &\leq \eta\,  \|u_3\|^2 + C_\eta \, \|u_1\|_\HH^2 \,
 \|u_2\|_\HH^2, \quad for \;  u_i\in V, \; i=1,2,3.
\end{align}
\end{enumerate}
}
Note (see \cite[Remark 2.1]{ch-mi}) that the upper estimate  in \eqref{boundB}
can also be  written in the following two equivalent forms:
\par
{\em (iii-a)} there exist positive constants $C_1$ and $C_2$ such that
\begin{align} \label{boundB-eq1}
| \langle B(u_1, u_2)\,, \, u_3\rangle | &\leq C_1  \|u_3\|^2 + C_2 \, \|u_1\|_\HH^2 \,
 \|u_2\|_\HH^2, \quad \mbox{\rm for }\; u_i\in V,\; i=1,2,3;
\end{align}
\par
{\em (iii-b)} there exists a constant $C>0$ such that  for $u_i\in V,\; i=1,2,3$ we have:
\begin{equation}\label{preB}
| \langle B(u_1, u_2)\,, \, u_3\rangle | = | \langle B(u_1, u_3)\,, \, u_2\rangle |
 \leq C  \, \|u_1\|_\HH \, \|u_2\| \,
 \|u_3\|_\HH .
\end{equation}
\par
For $u\in V$ set   $B(u):=B(u,u)$; with this notation,
relations \eqref{as}, \eqref{interpol} and
\eqref{preB} yield  for every $\eta >0$ the existence of $C_\eta >0$
such that for $u_1, u_2\in V$,
\begin{equation} \label{boundB1}
| \langle B(u_1)\, , \, u_2\rangle | \leq \eta\,  \|u_1\|^2 + C_\eta \, |u_1|^2 \,
 \|u_2\|_\HH^4.
\end{equation}
Relations \eqref{as} and   \eqref{boundB1}   imply that for any $\eta>0$ there exists $C_\eta>0$
such that
\begin{equation}
|\langle B(u_1)-B(u_2)\, ,\, u_1-u_2\rangle| =
 |\langle B(u_1-u_2), u_2\rangle | \leq \eta \|u_1-u_2\|^2 +
C_\eta\,  |u_1 - u_2|^2\, \|u_2\|_\HH^4 \label{diffB1}
\end{equation}
for all $u_1,u_2\in V$. As it was explained in \cite{ch-mi}
the main motivation for  condition ({\bf B}) is that it  covers a wide class
of 2D hydrodynamical models including
 Navier-Stokes equations,
magneto-hydrodynamic  equations,
 Boussinesq model for the  B\'{e}nard convection,
 magnetic B\'{e}nard problem,
3D Leray $\alpha$-model for Navier-Stokes equations,
Shell models of turbulence (GOY, Sabra, and dyadic  models).

\subsection{Noise}
 We will consider a
stochastic   external random force   $f$
in   equation  \eqref{u0},  driven by a Wiener process $W$
and whose intensity may depend on the solution $u$. More precisely,
let $Q$ be a linear
positive  operator in the Hilbert space $H$ which belongs to the trace class,
and hence is  compact. Let $H_0 = Q^{\frac12} H$. Then $H_0$ is a
Hilbert space with the scalar product
$$
(\phi, \psi)_0 = (Q^{-\frac12}\phi, Q^{-\frac12}\psi),\; \forall
\phi, \psi \in H_0,
$$
together with the induced norm $|\cdot|_0=\sqrt{(\cdot,
\cdot)_0}$. The embedding $i: H_0 \to  H$ is Hilbert-Schmidt and
hence compact, and moreover, $i \; i^* =Q$.
Let $L_Q\equiv L_Q(H_0,H) $ denote the space of linear operators $S:H_0\mapsto H$ such that
$SQ^{\frac12}$ is a Hilbert-Schmidt operator  from $H$ to $H$. The norm on the space $L_Q$ is
  defined by  $|S|_{L_Q}^2 =tr (SQS^*)$,  where $S^*$ is the adjoint operator of
$S$. The $L_Q$-norm can be also written in the form:
\begin{equation}\label{LQ-norm}
 |S|_{L_Q}^2=tr ([SQ^{1/2}][SQ^{1/2}]^*)=\sum_{k\geq 1} |SQ^{1/2}\psi_k|^2=
 \sum_{k\geq 1} |[SQ^{1/2}]^*\psi_k|^2
\end{equation}
for any orthonormal basis  $\{\psi_k\}$ in $H$.
\par
Let   $W(t)$ be a   Wiener process  defined   on a
probability space $(\Om, \cF,  \PX)$, taking values in $H$
and with covariance operator $Q$. This means that $W$ is Gaussian, has independent
time increments and that for $s,t\geq 0$, $f,g\in H$,
\[
\EE  (W(s),f)=0\quad\mbox{and}\quad
\EE  (W(s),f) (W(t),g) = \big(s\wedge t)\, (Qf,g).
\]
We also have the representation
\begin{equation}\label{W-n}
W(t)=\lim_{n\to\infty} W_n(t)\;\mbox{ in }\; L^2(\Om; H)\; \mbox{ with }
W_n(t)=\sum_{1\leq j\leq n} q^{1/2}_j \beta_j(t) e_j,
\end{equation}
where  $\beta_j$ are  standard (scalar) mutually independent Wiener processes,
$\{ e_j\}$ is an  orthonormal basis in $H$ consisting of eigen-elements of $Q$, with
$Qe_j=q_je_j$. For details concerning this Wiener process
we refer  to \cite{PZ92}, for instance.
Let $(\cF_t, t\geq 0)$ denote the Brownian filtration, that is the smallest
right-continuous complete filtration with respect to which $(W(t), t\geq 0)$ is adapted.
\par
We now define  some  adapted approximations of the processes $\beta_j$
and $W$.
For all integers  $n\ge 1$ and $k=0,1,\ldots,2^n$,  set $t_k=kT2^{-n}$ and
define step functions $\underline{s}_n, s_n, \bar{s}_n  : [0,T]\to [0,T]$
by the formulas
\begin{equation}\label{s-funct}
\underline{s}_n=t_k, \; s_n=t_{k-1}\vee 0, \; \bar{s}_n=t_{k+1}
\quad\mbox{for}\quad s\in [t_k,t_{k+1}[.
\end{equation}
It is clear that  $ s_n<\underline{s}_n<\bar{s}_n$. Now we set
$\dot{\tilde{\beta}}_j^n(s)=
T^{-1}2^n\left(\beta_j( \underline{s}_n)-\beta_j(s_n) \right)$,
for every $s\in [0,T]$, and thus  we obtain
an adapted approximation for $\dot{\beta}_j(s)$  given by the formula
\begin{equation}\label{beta-appr}
\dot{\tilde{\beta}}_j^n(s)=T^{-1}2^n\left[\beta_j(t_k)-\beta_j(t_{k-1}\vee 0) \right],
\quad\mbox{for}\quad s\in [t_k,t_{k+1}[.
\end{equation}
Clearly $\dot{\tilde{\beta}}_j^n(s)\equiv 0$ for $s\in [0,t_{1}[$
and $\dot{\tilde{\beta}}_j^n(s)=T^{-1}2^n\beta_j(t_1)$ for $s\in [t_1,t_{2}[$.
We also let
\begin{equation}\label{W-appr}
\dot{\widetilde{W}}^n(s)= \sum_{1\leq j\leq n}\dot{\tilde{\beta}}_j^n(s)q_j^{1/2}e_j
\end{equation}
denote the  corresponding finite-dimensional adapted approximation of $\dot{W}$.
\begin{lemma}\label{le:grwth-noise}
There exists an absolute constant  $\alpha_0>0$ such that for every
$\alpha > \a_0/\sqrt{T}$ and $t\in [0,T]$ we have as $n\to \infty$
\[ 
\lim_{n\to\infty} \PP \Bigg(  \sup_{1\leq j\le n}\sup_{s\le t}\left|\dot{\tilde{\beta}}_j^n(s)\right|
 \ge \alpha n^{1/2} 2^{n/2}\Bigg) =  
\lim_{n\to\infty} \PP \Bigg(  \sup_{s\le t}\Big|\dot{\widetilde{W}}^n(s)\Big|_{H_0} \ge
\alpha n 2^{\frac{n}{2}} \Bigg) = 0.
\] 
\end{lemma}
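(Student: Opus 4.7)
The plan is to reduce both estimates to a single Gaussian tail bound combined with a union bound. First I would rewrite each $\dot{\tilde\beta}_j^n(s)$ in terms of a standard normal. Since $\beta_j(t_k)-\beta_j(t_{k-1}\vee 0)$ is centered Gaussian of variance at most $T2^{-n}$, for $s\in[t_k,t_{k+1})$ we may write
\[
\dot{\tilde\beta}_j^n(s)= T^{-1/2} 2^{n/2}\, Z^n_{k,j},
\]
where the random variables $\{Z^n_{k,j}:j\ge 1, \, 1\le k\le 2^n\}$ are i.i.d.\ $\mathcal N(0,1)$ (with the convention $Z^n_{0,j}=0$). Since $s\le t$ forces the relevant index $k$ to satisfy $k\le \lceil tT^{-1}2^n\rceil \le 2^n$, the supremum in the first probability involves at most $N_n := n\cdot 2^n$ standard Gaussian variables.

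For the first statement I would apply the union bound together with the elementary estimate $\PP(|Z|\ge x)\le 2e^{-x^2/2}$, which gives
\[
\PP\Bigl(\sup_{1\le j\le n}\sup_{s\le t}|\dot{\tilde\beta}_j^n(s)|\ge \alpha n^{1/2} 2^{n/2}\Bigr)
\le 2 N_n \exp\Bigl(-\frac{\alpha^2 Tn}{2}\Bigr)
\le 2n \exp\!\Bigl(n\Bigl[\log 2 -\frac{\alpha^2 T}{2}\Bigr]\Bigr).
\]
This tends to $0$ as $n\to\infty$ precisely when $\alpha^2 T/2>\log 2$, that is, when $\alpha>\alpha_0/\sqrt{T}$ with the universal constant $\alpha_0:=\sqrt{2\log 2}$.

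For the second statement I would exploit the fact that $\{q_j^{1/2}e_j\}$ is an orthonormal basis of $H_0$ (indeed $Q^{-1/2}(q_j^{1/2}e_j)=e_j$, which is orthonormal in $H$). Hence
\[
|\dot{\widetilde W}^n(s)|_{H_0}^2=\sum_{j=1}^n |\dot{\tilde\beta}_j^n(s)|^2 \le n\cdot \sup_{1\le j\le n}|\dot{\tilde\beta}_j^n(s)|^2,
\]
so $|\dot{\widetilde W}^n(s)|_{H_0}\le n^{1/2}\sup_{j\le n}|\dot{\tilde\beta}_j^n(s)|$. Consequently the event in the second probability is contained in the event in the first probability (with the same $\alpha$), and the desired convergence to $0$ follows from the first part.

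There is no real obstacle here; the only delicate point is keeping track of constants in the union bound, since one needs the Gaussian decay $\exp(-\alpha^2 Tn/2)$ to strictly dominate the combinatorial factor $2^n$ coming from the number of dyadic subintervals of $[0,T]$ (the factor $n$ from the sum over $j$ is negligible), which is what forces the threshold $\alpha_0=\sqrt{2\log 2}$.
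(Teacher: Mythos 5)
Your proof is correct and follows essentially the same route as the paper's: a union bound over the $n\cdot 2^n$ i.i.d.\ standard Gaussian increments combined with the tail estimate $\PP(|Z|\ge x)\le 2e^{-x^2/2}$, yielding the same threshold $\alpha_0=\sqrt{2\ln 2}$, and the second limit deduced from the first via $|\dot{\widetilde W}^n(s)|_{H_0}^2\le n\sup_{j\le n}|\dot{\tilde\beta}_j^n(s)|^2$. The only cosmetic difference is that the paper bounds the Gaussian tail by inserting a factor $z/x$ under the integral rather than quoting the Chernoff-type bound directly.
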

\begin{proof}
One can see that
\[
\tilde{\Om}_n(t) =\Big\{  \sup_{1\leq j\le n}\sup_{s\le t}\left|\dot{\tilde{\beta}}_j^n(s)\right|
 \ge
\alpha n^{1/2} 2^{n/2}\Big\}\subset\bigcup_{1\leq j\leq n}\; \bigcup_{0\leq k<2^n}
\left\{|\gamma_j^k|\ge \alpha T^{1/2} n^{1/2}\right\},
\]
where $\gamma_j^k =T^{-1/2} 2^{n/2}[\beta_j(t_{k+1})-\beta_j(t_{k})]$
are independent standard normal Gaussian random  variables.
Therefore, if
$\alpha>\sqrt{T^{-1}2\ln 2}$ we have
\begin{eqnarray*}
 \PP  \big(\tilde{\Om}_n(t)\big)
 &  \leq &  n2^n  P( \vert \gamma_1^0 \vert \geq \alpha \sqrt{T n})
 =\frac{n2^{n+1}}{\sqrt{2\pi}}\int_{\alpha \sqrt{Tn}}^\infty
e^{-z^2/2}dz
\\
& \leq &
\frac{n^{1/2}2^{n+1}}{\alpha \sqrt{2\pi T}}\int_{\alpha \sqrt{Tn}}^\infty
ze^{-z^2/2}dz =
\frac{2 n^{1/2}}{\alpha \sqrt{2\pi T}}\exp\Big[n
\Big(-{\alpha^2T}/{2}+\ln{2}\Big)\Big].
\end{eqnarray*}
This proves the first convergence result. 
The second one follows immediately from the estimate
\[
 \sup_{s\le t}\Big|\dot{\widetilde{W}}^n(s)\Big|^2_{H_0}
= \sup_{s\le t}\sum_{1\leq j\leq n}\left|\dot{\tilde{\beta}}_j^n(s)\right|^2
\le n  \sup_{1\leq j\leq n} \sup_{s\le t}\left|\dot{\tilde{\beta}}_j^n(s)\right|^2.
\]
\end{proof}
In the sequel, we will localize the processes using the following set:
\begin{equation}\label{om-n-t}
\Om_n(t)=\Bigg\{ \sup_{j\le n}\sup_{s\le t}
\Big|\dot{\tilde{\beta}}_j^n(s)\Big|
 \le \alpha n^{1/2} 2^{n/2}\Bigg\}\cap
\left\{ \sup_{s\le t}\Big|\dot{\widetilde{W}}^n(s)\Big|_{H_0}
 \le
\alpha n 2^{n/2}\right\}.
\end{equation}
It is clear that $\Om_n(t)\subset \Om_n(s)$ for $t>s$ and $\Om_n(t)\in\cF_t$.
Furthermore,  Lemma~\ref{le:grwth-noise} implies that  $\PP\left( \Om_n(T)^c\right) \to 0$
as $n\to\infty$.
\par
For any $n\geq 1$, we introduce the following localized processes $\dot {\tilde{\beta}}_j^n$ and $\dot{\tilde{W}}^n$:
\begin{equation}\label{bw-cut}
    \dot{\beta}_j^n(t)=\dot{\tilde{\beta}}_j^n(t)1_{\Om_n(t)},\;j\le n, \quad
  \dot{W}^n(t)=\dot{\tW}^n(t)1_{\Om_n(t)}.
\end{equation}
For all integers $n$ and  $j=1, \cdots, n$,
 $(\dot{\beta}_j^n(t), 0\leq t\leq T)$ (resp. $(\dot{W}^n(t), 0\leq t\leq T)$)  are $(\cF_t)$-adapted
$\R$ (resp.\ $H_0$) valued processes.

\subsection{Diffusion coefficients}
We need below two diffusion coefficients $\s$ and $\ts$ which
map $H$ into $L_Q(H_0, H)$. They are
assumed to satisfy the following growth and Lipschitz conditions:
\medskip\par
\noindent \textbf{Condition (S):} {\it
The maps   $\s,\ts $ belong to $ {\mathcal C}\big(H; L_Q(H_0, H)\big)$ and satisfy:
\begin{enumerate}
\item
 There exist non-negative  constants $K_i$ and $L$
such that for every  $u,v\in H$:
\begin{eqnarray}\label{s-bnd}
|\s(u)|^2_{L_Q}+|\ts(u)|^2_{L_Q} & \leq &K_0+ K_1 |u|^2, \\
\label{s-lip}
|\s(u)-\s(v)|^2_{L_Q}+|\ts(u)-\ts(v)|^2_{L_Q}
&\leq& L |u-v|^2 .
\end{eqnarray}
\item Moreover, for every $N>0$,
\begin{equation}\label{sn-conv}
\lim_{n\to\infty}\sup_{|u|\le N} |\ts(u)-\ts(u)\circ\Pi_n|_{L_Q}
=0,
\end{equation}
where $\Pi_n : H_0\to H_0$ denote the projector defined by
$\Pi_n u =\sum_{k=1}^n \big( u \, ,\, e_k\big) \, e_k$, where
 $\{e_k, k\geq 1\}$ is the orthonormal basis of $H$  made by
 eigen-elements of the covariance operator
 $Q$ and used in  \eqref{W-n}.
\end{enumerate}
{\bf Condition (DS):}
For every integer $j\geq 1$ let  $\s_j,\ts_j\, :\; H \mapsto H$
be defined  by
\begin{equation}\label{sj-def}
\s_j(u) =q_j^{1/2}\s(u)e_j\, , \quad \ts_j(u) =q_j^{1/2}\ts(u)e_j\, ,\quad \forall
u\in H.
\end{equation}
We  assume that the maps   $\ts_j$
are twice Fr\'echet differentiable and satisfy
\begin{enumerate}
\item For every integer $N\geq 1$ there exist positive constants $C_i(N)$, $i=1,2,3$ such that:
\begin{align}\label{Ds-bnd}
& C_1(N):=\sup_{j}\sup_{|u|\le N}
|D\ts_j(u)|_{L(H,H)}  <\infty, \\   
\label{D2s-bnd}
& C_2(N):=\sup_{j}\sup_{|u|\le N}
|D^2\ts_j(u)|_{L(H\times H,H)}  <\infty , \\
\label{Ds*-bnd}
&\sup_{j}\sup_{|u|\le N}
\|[D\ts_j(u)]^*v\|  \leq C_3(N) \|v\|\quad\mbox{for every}\quad v\in V. 
\end{align}
\item For every integer $n\geq 1$,  let  
 the functions $\varrho_n$,  $\tro_n : H\mapsto H $ be defined by
\begin{equation}\label{rn-def}
\varrho_n(u) = \sum_{1\leq j\leq n} D\tilde{\s}_j(u)\, \s_j(u)\, , \quad
\tro_n(u) = \sum_{1\leq j\leq n} D\ts_j(u)\ts_j(u), \quad \forall  u \in H,
\end{equation}
where $\s_j$ and $\ts_j$ are  given by \eqref{sj-def}.
For every integer $N\geq 1$ there exist positive
 constants   $\bar{K}_N, \bar{C}_N$
such that:
\begin{eqnarray}\label{rn-1}
\sup_{|u|\le N}\sup_{n}\left\{ |\varrho_n(u)| +|\tro_n(u)|\right\}
&\le& {\bar K}_N, \\
\label{rn-2}
\sup_{|u|, |v|\le N}\sup_{n}\left\{ |\varrho_n(u)-\ro_n(v)| +|\tro_n(u)-\tro_n(v)|\right\}
&\le &{\bar C}_N |u-v|.
\end{eqnarray}
\item Furthermore, there  exist  mappings $\ro$,   $\tro : H\mapsto H $  such that every integer $N\geq 1$
\begin{equation}\label{rn-3}
\lim_{n\to\infty}\sup_{|u|\le N}\left\{ |\varrho_n(u)-\ro(u)|
+|\tro_n(u)-\tro(u)|\right\} =0 .
\end{equation}
\end{enumerate}
}

\begin{remark}\label{re:as-s}
 {\rm
As  a simple (non-trivial) example of diffusion coefficient $\s$ and $\ts$
satisfying Conditions  {\bf (S)}  and {\bf (DS)}, we can consider the case when
  $\ts(u)$ is proportional to $\s(u)$, i.e. $\ts(u)=c_0\s(u)$
  for some constant $c_0$ and $\s(u)$ is an  affine function of $u$ of the form:
\[
\s(u)f=\sum_{j \geq 1} f_j \s_j(u)\quad\mbox{for}\quad
f=\sum_{j\geq 1} f_j \sqrt{q_j}e_j\in H_0,
\]
where $\s_j(u)=g_j+S_ju$, $j=1,2,\ldots$. Here  $g_j\in H$  satisfy
$\sum_{j\geq 1} |g_j|^2<\infty$  and $S_j: H\mapsto H$
are linear operators  such that $S_j^*: V\mapsto V$ and
$ \sum_{j\geq 1} |S_j|_{L(H,H)}^2 +
\sup_{j\geq 1} |S^*_j|_{L(V,V)}^2<\infty.
$
For instance, in the case $H=L_2(D)$ and $V=H^1(D)$, where $D$ is a bounded domain in
$\RR^d$, our framework includes diffusion terms of the form
\[
\s(u)dW(t)=\sum_{1\leq j\leq N}( g_j(x)+\phi_j(x) u(x))d\beta_j(t),
\]
where $g_j\in L_2(D)$ and $\phi_j\in C^1(\bar{D})$, $j=1,2,\ldots,N$
are arbitrary functions. In this situation,  another possibility to satisfy
Conditions {\bf (S)} and {\bf (DS)} is
\[
\s(u)dW(t)=\sum_{1\leq j\leq N} s_j([\cR_ju](x))d\beta_j(t),
\]
where $s_j:\RR\mapsto\RR$ are ${\mathcal C}^2$-functions such that
$s_j'$ and $s_j''$ are bounded,  and $[\cR_ju](x)=\int_Dr_j(x,y)u(y)dy$
with sufficiently smooth kernels $r_j(x,y)$.
}
\end{remark}
\medskip\par
In order to define the sequence of processes $u^n$ converging to $u$ in the Wong-Zakai approximation, we
need a control term, that is a coefficient $G$ of the process acting on an element of the RKHS of $W$. We impose that
$G$ and $R$ satisfy the following:\\
\noindent \textbf{Condition (GR):} {\it
Let   $G\, :\, H\mapsto L (H_0, H)$  and  $R\, :\, H\mapsto  H$ satisfy the following growth and Lipschitz conditions:
\begin{eqnarray}\label{G-bnd-lip}
|G(u)|^2_{L (H_0, H)} &\leq& K_0+ K_1 |u|^2,
\quad
|G(u)-G(v)|^2_{L (H_0, H)}
\leq L |u-v|^2\, ,\\
\label{R-bnd-lip}
|R(u)| &\leq& R_0(1+|u|),
\quad
|R(u)-R(v)| \leq R_1 |u-v|,
\end{eqnarray}
for some nonnegative constants $K_i$, $R_i$, $i=0,1$,  $L$ and for every  $u,v\in H$
(we can assume that $K_i$ and $L$ are the same constants as in \eqref{s-bnd} and
\eqref{s-lip}).
}
\subsection{Basic problem}

Let $ X: = {\mathcal C}\big([0, T]; H\big) \cap L^2\big(0, T;V\big) $
denote the Banach space endowed  with the norm defined by
\begin{equation} \label{norm}
 \|u\|_X = \Big\{\sup_{0\leq s\leq T}|u(s)|^2+ \int_0^T \|
u(s)\|^2 ds\Big\}^\frac12 .
\end{equation}
The class  $\mathcal{A}$ of  admissible random shifts  is the
   set of $H_0-$valued
$(\cF_t)-$predictable stochastic processes $h$ such that
$\int_0^T |h(s)|^2_0 ds < \infty, \; $ a.s.
For any $M>0$, let
\begin{equation} S_M=\Big\{h \in L^2(0, T; H_0): \int_0^T \!\! |h(s)|^2_0 ds \leq M\Big\},
\;  
\label{AM}
 \mathcal{A}_M=\{h\in \mathcal{A}: h(\om) \in
 S_M, \; a.s.\}.
\end{equation}

Assume that $h\in\cA_M$ and $\xi \in H$ is $\cF_0$-measurable random element such that
$\EX |\xi|^4 < \infty$.
Then under the conditions {\bf (B)}, {\bf (GR)}, \eqref{s-bnd} and \eqref{s-lip} in {\bf (S)},
 Theorem~2.4~\cite{ch-mi} implies that there exists a  unique  $(\cF_t)$-predictable
solution $u\in X$ to the stochastic problem:
\begin{eqnarray}\label{main-eq}
u(t)&  = & \xi - \int_0^t\left[Au(s)+  B(u(s))+ R(u(s))\right]ds  \\
&&  +  \int_0^t \big(\s+\ts\big)(u(s)) dW(s) +
  \int_0^t G(u(s)) h(s) ds,\;\; \mbox{ a.s. for  all $t \in [0,T]$.}
\nonumber
\end{eqnarray}
This solution  is  weak in the PDE sense and  strong in the probabilistic meaning.
Moreover, for this solution there exists a constant
 $C:=C(K_i,L,R_i,T,M)$
 such that for $h\in {\mathcal A}_M$,
\begin{equation} \label{eq3.1}
 \EX\Big( \sup_{0\leq t\leq T}
 |u(t)|^4
+ \int_0^T \|u(t)\|^2\, dt+\int_0^T \|u(t)\|_\HH^4\, dt \Big) \leq C\, \big( 1+\EX|\xi|^4\big).
\end{equation}
\begin{remark}\label{re:serrin} 
{\rm In the case of 2D Navier--Stokes  equations in a domain $D\subset\R^2$, 
we can choose  $\HH$ as the space of divergent free 2D vector fields from 
$[ L^4(D)]^2$  (see \cite{ch-mi}). Therefore,    the finiteness
of the integral  $\int_0^T \|u(t)\|_\HH^4 dt$ stated in (\ref{eq3.1})
is a Serrin's type condition. In the case of deterministic Navier--Stokes
equations this condition implies additional regularity of weak solutions
(see, e.g., \cite{Constantin}). For instance, they become strong solutions
for an  appropriate choice of the initial data. However we do not know whether 
similar regularity properties can be established for our abstract model
without additional requirements concerning the diffusion part of the equation.  
}
\end{remark}

\subsection{Approximate problem}
We also consider the  evolution equation on the time interval $[0,T]$:
\begin{eqnarray}\label{apprx-eq}
u^n(t)&  = & \xi - \int_0^t\left[Au^n(s)+  B(u^n(s))+ R(u^n(s))\right]ds
+  \int_0^t \s((u^n(s)) dW(s)
\\
&&
+  \int_0^t \ts(u^n(s)) \dot{\widetilde{W}}^n(s)ds
- \int_0^t \big(\ro+\hf\tro\big) (u^n(s))ds
 +  \int_0^t G(u^n(s)) h(s) ds,\;\; \mbox{ a.s.},
\nonumber
\end{eqnarray}
 where $\dot{\widetilde{W}}^n(t)$ is defined in \eqref{W-appr}.
Let again $h\in {\mathcal A}_M$, $\xi$ be $\cF_0$ measurable such that $\EX |\xi|^4<+\infty$.
\par
First,  since
 $(\dot{\widetilde{W}}^n_t, t\in [0,T])$ is $H_0$-valued
and $({\mathcal F}_t)$-adapted, we check that the following infinite
dimensional version of the Benes criterion holds: for some $\delta>0$
we have that $\sup_{0\leq s\leq T} \EX\big(\big(\exp(\delta
|\dot{\widetilde{W}}^n(s)|_0^2\big)\big)<+\infty$. This is a
straightforward consequence of the inequality for some standard
Gaussian random variable $Z$:
\begin{align*} \sup_{0\leq s\leq T}
\EX\big(\exp\big(\delta |\dot{\widetilde{W}}^n(s)|_0^2\big)\big)& \leq
\prod_{1\leq j\leq n} \sup_{0\leq s\leq T}
\EX\Big(\exp(\delta T^{-2}2^{2n} |\beta_j(\underline{s}_n)-\beta_j(s)|^2\big)\Big)\\
& \leq
 \Big(\EX \big(\exp(\delta T^{-1} \, 2^{n} \, |Z|^2)\big)\Big)^n<+\infty
\end{align*}
for $\delta>0$ small enough. Therefore, for any constant $\gamma$, the measure with density
$L^\gamma_T=\exp\Big(\gamma \int_0^T \dot{\widetilde{W}}^n(s) dW(s)-\frac{\gamma^2}{2}
\int_0^T |\dot{\widetilde{W}}^n(s)|^2_0\, ds\Big)$ with respect to
${\mathbb P}$ is a probability ${\mathbb Q}_\gamma<< {\mathbb P}$, such
that the process
\begin{equation} \label{Wgamma}
 \big( W_\gamma(s):= W(s)-  \gamma {\widetilde{W}}^n(s), 0\leq s\leq T\big)\; \mbox{\rm
 is a } {\mathbb Q}_\gamma \; \mbox{\rm  Brownian motion}
\end{equation}
 with values in $H$, and the same covariance operator $Q$.
Then  Theorem~3.1~\cite{ch-mi} shows that
\begin{eqnarray*} u^n(t)&  = & \xi - \int_0^t\left[Au^n(s)+  B(u^n(s))+ R(u^n(s))\right]ds
+  \int_0^t \s((u^n(s)) dW_\gamma(s)
\\
&&
- \int_0^t \big(\ro+\hf\tro\big) (u^n(s))ds
 +  \int_0^t G(u^n(s)) h(s) ds,\;\; \mbox{ a.s.},
\end{eqnarray*}
has a unique $(\cF_t)$-predictable solution $u^n\in X$ which satisfies  \eqref{eq3.1} where ${\mathbb P}$
is replaced by ${\mathbb Q}_\gamma$
Therefore, $u^n\in X$ is the unique solution to  problem \eqref{apprx-eq} when $\tilde{\sigma}= - \gamma \sigma$, and
$u^n$ also satisfies  \eqref{eq3.1} with expected values under the given probability ${\mathbb P}$,
  but the  constant $C$ in the right hand side  depends on the constants
$K_i$, $L$, $R_i$, $T$, $M$, and
in addition,  it may also depend on $n$.
\par
To keep the convergence result as general as possible,  
 in the sequel we only  suppose that $\tilde{\sigma}$ satisfies
Conditions {\bf (DS)} and  that problem  \eqref{apprx-eq} is well-posed in $X$.

\section{Main results}\label{sec:support}
In this section we first state Wong--Zakai approximation results
(see Theorem \ref{th:WZ-appr}) and then show in Theorem
\ref{th:support} how the description support can be derived from
Theorem \ref{th:WZ-appr}.
\par
More precisely,  let $u$ and $u^n$ be solutions to \eqref{main-eq}
and \eqref{apprx-eq} respectively. Our first main result  proves
that the $X$ norm of the difference $u^n-u$ converges to 0 in
probability. This Wong--Zakai type result is the key point of the
support characterization stated in Theorem \ref{th:support} below.
\begin{theorem}\label{th:WZ-appr}
Let conditions {\bf (B)}, {\bf (S)}, {\bf (DS)} and {\bf (GR)} hold,
 $h\in {\mathcal A}_M$ for some $M>0$
and $\xi$ be $\cF_0$ measurable such that $\EX |\xi|^4<+\infty$. Let $u$ be the  solution to
\eqref{main-eq} such  that:\\
  (i) $t\mapsto \|u(t)\|_\HH$ is continuous on $[0,T]$ almost surely,\\
 (ii) there exists $q>0$ such that
for any constant $C>0$ we have
\begin{equation} \label{crit-bound}
\EX \Big( \sup_{[0,\tau_C]}\| u(t)\|^q_\HH \Big)<\infty,
\end{equation}
where
 $\tau_C:=\inf\{t : \sup_{s\le t}|u(s)|^2 + \int_0^t \|u(s)\|^2 ds \geq C\}\wedge T$
 is a stopping time.
\smallskip\par
 Suppose that for every $n\geq 1$  problem \eqref{apprx-eq} is well posed and let $u^n$ denote its solution.
  Then for every $\la>0$ we have
\begin{equation} \label{WZ-cnvrg}
\lim_{n\to\infty}\PP \Big( \sup_{t\in [0,T]}|u(t)-u^n(t)|^2 +
\int_0^T \|u(s)-u^n(s)\|^2 ds   \geq \la\Big)=0 .
\end{equation}
\end{theorem}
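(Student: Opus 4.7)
My plan is to localize, apply It\^o's formula to $|u(t)-u^n(t)|^2$, control every resulting term by an expression of the form $\int_0^t \phi(s)\,|u-u^n|^2\,ds+\eta_n(t)$ with $\phi$ integrable and $\eta_n\to 0$ in probability, and close via a stochastic Gronwall argument. The obvious stopping time to combine with $\tau_C$ is
\[
\tau_C^n:=\inf\Big\{t:\sup_{s\le t}|u^n(s)|^2+\int_0^t\|u^n(s)\|^2\, ds\ge C\Big\}\wedge T,
\]
and I would work on the event $\Om_n(T)\cap\{\tau_C\wedge\tau_C^n=T\}$, on which the noise truncation \eqref{bw-cut} is inactive and both solutions lie in a bounded subset of $X$. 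Since Lemma~\ref{le:grwth-noise} yields $\PP(\Om_n(T)^c)\to 0$ and the uniform bound \eqref{eq3.1} makes $\PP(\tau_C<T)$ and $\PP(\tau_C^n<T)$ small uniformly in $n$ for $C$ large, it suffices to prove the convergence on this event and then send first $n\to\infty$ and then $C\to\infty$.

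\textbf{Routine terms in the It\^o expansion.} The deterministic dissipation produces $-2\int_0^t\|u-u^n\|^2\,ds$, supplying the $V$-coercivity. The nonlinearity is rewritten via the antisymmetry \eqref{as} as $2\langle B(u-u^n,u^n),u-u^n\rangle$ and estimated by \eqref{diffB1}; the resulting factor $\|u^n\|_\HH^4$ is integrable on $[0,\tau_C^n]$ thanks to the interpolation \eqref{interpol}. The $R$- and $Gh$-contributions are Lipschitz by \eqref{R-bnd-lip}--\eqref{G-bnd-lip} and give terms of the form $C\int_0^t(1+|h(s)|_0^2)|u-u^n|^2\,ds$. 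The genuine It\^o noise $\int_0^{\cdot}[\s(u)-\s(u^n)]dW(s)$ is handled by the BDG inequality and \eqref{s-lip}. Each of these contributions already has the form sought in the first paragraph.

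\textbf{Main obstacle: the Stratonovich-type correction.} Everything non-trivial concentrates in
\[
J_n(t):=\int_0^t\ts(u)dW(s)-\int_0^t\ts(u^n)\dot{\widetilde W}^n(s)ds+\int_0^t\big(\ro+\hf\tro\big)(u^n)ds.
\]
Splitting $J_n=\int_0^t[\ts(u)-\ts(u^n)]dW+K_n(t)$ reduces the analysis to $K_n$, which depends only on $u^n$. Using \eqref{sn-conv} I would first project $W$ onto $\Pi_m H$ for large $m$ at the price of a vanishing error uniform in $n$, and treat each scalar noise component separately. On each dyadic interval $[t_k,t_{k+1})$, I would then expand $\ts_j(u^n(s))$ at $u^n(s_n)$ to second order using \eqref{Ds-bnd}--\eqref{D2s-bnd} and substitute for $u^n(s)-u^n(s_n)$ its semimartingale decomposition coming from \eqref{apprx-eq}. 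Multiplying by the adapted increment $\dot{\tilde\b}_j^n(s)=T^{-1}2^n[\b_j(\underline{s}_n)-\b_j(s_n)]$ and summing over $k$, the cross-term between the $\s\,dW$ piece of $u^n$ and $\dot{\tilde\b}_j^n$ produces a Riemann-sum approximation of $\ro_n(u^n)$ (matching the definition \eqref{rn-def}), while the cross-term between the $\ts\,\dot{\widetilde W}^n ds$ piece of $u^n$ and $\dot{\tilde\b}_j^n$ generates $\hf\tro_n(u^n)$ (the factor $\hf$ reflecting the integration of $(s-t_k)$ on the current dyadic interval). By \eqref{rn-3}, these Riemann sums converge to $\ro(u^n)$ and $\hf\tro(u^n)$, which is exactly the drift correction subtracted in \eqref{apprx-eq}. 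The hard part will be estimating the remainders: the second-order Taylor remainder brings in $\sup_{[0,\tau_C]}\|u(\cdot)\|_\HH$ (through the $B$-contribution to the increment of $u^n$, compared with that of $u$), which is exactly where the continuity of $\|u(\cdot)\|_\HH$ and the Serrin-type hypothesis \eqref{crit-bound} enter; the martingale and discretization remainders must be estimated with a ``time-integrated time-increment'' bound strictly sharper than the $L^2$-modulus of continuity, in order to beat the $n^{1/2}2^{n/2}$ blow-up allowed by the truncation level in Lemma~\ref{le:grwth-noise}. Producing this sharper increment estimate is the technical core of the paper and is the content of Section~\ref{time-in}.

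\textbf{Conclusion via Gronwall.} Gathering everything on $\Om_n(T)\cap\{\tau_C\wedge\tau_C^n=T\}$ yields
\[
|u(t)-u^n(t)|^2+\int_0^t\|u-u^n\|^2\, ds \le \int_0^t\Phi_C(s)\,|u-u^n|^2\, ds+\eta_n(t),
\]
with $\int_0^T\Phi_C(s)\,ds<\infty$ a.s.\ and $\eta_n\to 0$ in probability. A stochastic Gronwall lemma then gives $\|u-u^n\|_X\to 0$ in probability on the localizing event, and \eqref{WZ-cnvrg} follows by sending $n\to\infty$ and then $C\to\infty$.
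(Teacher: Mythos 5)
Your outline follows the same broad route as the paper (localize, It\^o on $|u-u^n|^2$, isolate the Stratonovich correction, close by Gronwall), and your heuristic for where $\ro_n$ and $\hf\tro_n$ come from is accurate. But as a proof it has two genuine gaps.

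First, the localization. You propose to work on $\{\tau_C\wedge\tau_C^n=T\}$ and claim that $\PP(\tau_C^n<T)$ is small \emph{uniformly in $n$} by \eqref{eq3.1}. That estimate is not available for $u^n$: as noted at the end of Section~\ref{s2}, the a priori bound \eqref{eq3.1} for the solution of \eqref{apprx-eq} is obtained via a Girsanov change of measure and the resulting constant may depend on $n$ (the density involves $\dot{\widetilde W}^n$, whose $H_0$-norm is only controlled by $n2^{n/2}$). So your scheme of ``send $n\to\infty$, then $C\to\infty$'' does not close. The paper circumvents this by never bounding $u^n$ directly: it introduces the stopping time $\tau_n^{(2)}$ at which $\sup_s|u-u^n|^2+\int\|u-u^n\|^2$ first reaches $\la\le 1$, so that on $[0,\tau_n]$ the bound \eqref{bnd-n1} for $u^n$ is inherited from the one for $u$, and Lemma~\ref{le:stop-t} converts the estimate up to $\tau_n$ into \eqref{WZ-cnvrg}. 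You need this (or an equivalent device) for the argument to start.

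Second, and more seriously, the technical core is asserted rather than proved. You correctly identify that one must (a) establish a time-integrated increment estimate strictly better than the $2^{-n/2}$ rate, for \emph{both} $u$ and $u^n$ (Propositions~\ref{pr:increm} and \ref{pr:increm-n}, rates $2^{-3n/4}$ and $n^{3/2}2^{-3n/4}$, obtained by a bootstrap on the martingale term), and (b) show that the Riemann sums $\sum_j D\ts_j(u^n(s_n))\s_j(u^n(s_n))[\cdots]$ reproduce $\ro_n+\hf\tro_n$ up to errors that vanish after multiplication by the $n^{3/2}2^{n/2}$ truncation level. Step (b) is not a routine law-of-large-numbers remark: it requires the discrete-martingale decompositions of Lemmas~\ref{S6} and \ref{S7} (separating the diagonal terms $[\beta_j(t_i)-\beta_j(t_{i-1})]^2\,2^nT^{-1}-1$ from the off-diagonal products $[\beta_j][\beta_l]$, $l\ne j$, and from the overshoot past $\tau_n$), each handled with Doob/BDG and the orthogonality of the increments. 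Writing ``this is the content of Section~\ref{time-in}'' defers exactly the part of the argument where all previous Wong--Zakai proofs relied on H\"older regularity in time, which is unavailable here; without carrying out (a) and (b) the proposal is an accurate table of contents for the proof, not a proof.
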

The convergence in  \eqref{WZ-cnvrg} allows to deduce both
inclusions characterizing the support of the distribution of  the
solution  $U$  to the following stochastic perturbation of the
evolution equation \eqref{u0}:
\begin{equation} \label{u-supp}
d U(t)  + \big[  A U(t) + B\big(U(t) \big) + R (U(t)) \big]\, dt
=  \Xi(U(t))\, dW(t), \quad U(0)=\xi\in H,
\end{equation}
where $\Xi\in {\mathcal C}\big(H; L_Q(H_0, H)\big)$ is such that
conditions  {\bf (S)} and {\bf (DS)} hold with $\ts\equiv\s\equiv \Xi$.
Thus problem \eqref{u-supp} is a special case of problem \eqref{main-eq}.
\par
Let $\phi\in L^2(0,T; H_0)$ and
$\ro_\Xi\equiv \ro $ be defined by \eqref{rn-def} and \eqref{rn-3} with $\ts=\s=\Xi$.
We also consider
the following (deterministic) nonlinear PDE
\begin{equation}  \label{vh-supp}
\partial_t v_\phi(t)  +   A v_\phi(t) + B\big(v_\phi(t) \big) +  R (v_\phi(t)) +
\frac{1}{2} \ro_\Xi (v_\phi(t))
 =  \Xi(v_\phi(t))\phi(t), \quad v_\phi(0)=\xi\in H.
\end{equation}
If $B(u)$ satisfies condition {\bf (B)} and $R:H\mapsto H$
possesses property \eqref{R-bnd-lip} we can use
Theorem 3.1 in \cite{ch-mi} to obtain the existence (and uniqueness)
of the  solution $v_\phi$ to \eqref{vh-supp} in the space
$X = {\mathcal C}\big([0, T]; H\big) \cap L^2\big(0, T;V\big)$.
Let
\[
\cL= \left\{ v_\phi\, : \; \phi\in L^2(0,T; H_0)\right\}\subset X.
\]
Our second main result  is the following  consequence of the
approximation Theorem~\ref{th:WZ-appr}.
\begin{theorem}\label{th:support}
Let conditions {\bf (B)} and {\bf (S)}, {\bf (DS)} with
$\ts\equiv\s\equiv \Xi$ be in force. Assume that $R:H\mapsto H$
satisfies \eqref{R-bnd-lip}. Let $(U(t), t\in [0,T])$ denote the  solution to the
stochastic evolution  equation
 \eqref{u-supp} with deterministic initial data $\xi\in H$.
Suppose that
 conditions (i) and (ii) of Theorem~\ref{th:WZ-appr} hold for this solution $U$.
Then
$
{\rm supp}\, U(\cdot)= \bar{\cL},
$
where $\bar{\cL}$ is the closure of $\cL$ in $X$ and ${\rm supp}\, U(\cdot)$
denotes the support of the distribution $\PX\circ U^{-1}$, i.e.,  the support
of the Borel measure on $X$ defined by
$\mu(\cB)=\PX\left\{\om\, :\; U(\cdot)\in \cB\right\}$
for any Borel subset $\cB$ of  $X$.
\end{theorem}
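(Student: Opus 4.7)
The plan is to deduce both inclusions in ${\rm supp}\, U(\cdot)=\bar{\cL}$ from Theorem~\ref{th:WZ-appr} applied with two distinct choices of parameters.

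For ${\rm supp}\, U(\cdot)\subset\bar{\cL}$, take $\s\equiv 0$, $\ts\equiv \Xi$, $G\equiv 0$, $h\equiv 0$ in Theorem~\ref{th:WZ-appr}. Then \eqref{main-eq} reduces to \eqref{u-supp}, so $u=U$; and since $\rho_n\equiv 0$ and $\tro_n\to\ro_\Xi$, equation \eqref{apprx-eq} coincides with the skeleton equation \eqref{vh-supp} driven by the control $\dot{\widetilde W}^n$, so $u^n(\om)=v_{\dot{\widetilde W}^n(\om)}\in\cL$ for a.e.~$\om$. The convergence \eqref{WZ-cnvrg} and extraction of an almost surely convergent subsequence give $U(\om)\in\bar{\cL}$ almost surely, which proves the first inclusion.

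For the reverse inclusion $\bar{\cL}\subset{\rm supp}\, U(\cdot)$, fix $v_\phi\in\cL$ and $\e>0$; the aim is $\PP(\|U-v_\phi\|_X<\e)>0$. Apply Theorem~\ref{th:WZ-appr} with $\s\equiv \Xi$, $\ts\equiv -\Xi$, $G\equiv \Xi$, $h\equiv \phi$, and with $R$ replaced by $R+\hf\ro_\Xi$; since Condition~(DS), cf.~\eqref{rn-1}--\eqref{rn-3}, gives $\ro_\Xi$ only locally bounded and Lipschitz, this modification is understood via a smooth cutoff $\ro_\Xi\chi_N$ supported in $\{|u|\le N+1\}$, combined with the a priori bound \eqref{crit-bound} and the stopping time $\tau_C$, so that all limits $N,C\to\infty$ can be taken at the end. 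Since $\s+\ts\equiv 0$, equation \eqref{main-eq} becomes the deterministic problem \eqref{vh-supp} and hence $u=v_\phi$. A direct calculation with $\rho_n=-\ro_\Xi$, $\tro_n=\ro_\Xi$ shows that the $\pm\hf\ro_\Xi$ contributions cancel in \eqref{apprx-eq}, which simplifies to
\[
u^n(t)=\xi-\int_0^t\big[Au^n+B(u^n)+R(u^n)\big]\,ds+\int_0^t\Xi(u^n)\,d\hat W_n(s),\qquad \hat W_n:=W+\int_0^\cdot\bigl(\phi-\dot{\widetilde W}^n\bigr)\,ds.
\]
The infinite-dimensional Benes-type verification sketched just after \eqref{Wgamma} shows that for each fixed $n$,
\[
L_n:=\exp\Big(-\int_0^T\bigl(\phi-\dot{\widetilde W}^n\bigr)\,dW-\hf\int_0^T\bigl|\phi-\dot{\widetilde W}^n\bigr|_0^2\,ds\Big)
\]
is a true $\PP$-martingale with $\EE L_n=1$. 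Setting $d\mathbb Q_n/d\PP:=L_n$, Girsanov's theorem makes $\hat W_n$ a $\mathbb Q_n$-Brownian motion with covariance $Q$, and pathwise uniqueness for \eqref{u-supp} gives that the distribution of $u^n$ under $\mathbb Q_n$ coincides with that of $U$ under $\PP$.

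Combining these facts concludes the proof. Theorem~\ref{th:WZ-appr} gives $\PP(\|u^n-v_\phi\|_X<\e)\to 1$, so this event has positive $\PP$-probability for every sufficiently large $n$; since $L_n>0$ almost surely, $\mathbb Q_n\sim\PP$ and the same event has positive $\mathbb Q_n$-probability. The law identity then yields
\[
\PP(\|U-v_\phi\|_X<\e)=\mathbb Q_n(\|u^n-v_\phi\|_X<\e)>0,
\]
which proves $v_\phi\in{\rm supp}\, U(\cdot)$. The main obstacle is the justification of the replacement $R\mapsto R+\hf\ro_\Xi$ inside Theorem~\ref{th:WZ-appr}, since only a local form of \eqref{R-bnd-lip} is available for $R+\hf\ro_\Xi$ under Condition~(DS); the cutoff/stopping-time scheme indicated above reduces the problem to the globally Lipschitz setting, after which the argument is applied at each truncation level and the limits are taken. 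The Benes-type verification for $L_n$ is a routine adaptation of the computation preceding \eqref{Wgamma}, since $\phi$ is deterministic with finite $L^2(0,T;H_0)$-norm.
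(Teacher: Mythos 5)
Your argument is correct and follows the same Millet--Sanz-Sol\'e strategy as the paper: the inclusion ${\rm supp}\,U(\cdot)\subset\bar{\cL}$ is obtained exactly as in the paper (Theorem~\ref{th:WZ-appr} with $\s=0$, $\ts=\Xi$, $G=0$, so that $u=U$ and $u^n=v_{\dot{\widetilde W}^n}\in\cL$), and the reverse inclusion via a Girsanov shift $W\mapsto W-\widetilde W^n+\int_0^\cdot\phi\,ds$ together with uniqueness in law. Where you genuinely diverge --- and in fact are more careful than the paper's own write-up --- is in the bookkeeping of the It\^o--Stratonovich correction for the second inclusion. The paper invokes Theorem~\ref{th:WZ-appr} with $\ts=-\s$, $G=\s=\Xi$ and the \emph{original} $R$; with that choice \eqref{main-eq} reads $\partial_t u+Au+B(u)+R(u)=\Xi(u)\phi$, which is \emph{not} \eqref{vh-supp}, and the residual correction $-(\ro+\hf\tro)=+\hf\ro_\Xi$ in \eqref{apprx-eq} prevents $u^n$ from coinciding with the solution of \eqref{u-supp} driven by the shifted Brownian motion. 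Your replacement $R\mapsto R+\hf\ro_\Xi$ repairs both mismatches at once and is what the paper implicitly relies on; your observation that this replacement is not directly admissible (since \eqref{rn-1}--\eqref{rn-3} give $\ro_\Xi$ only locally bounded and locally Lipschitz, whereas \eqref{R-bnd-lip} is global) is also correct.

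One point in your localization sketch needs to be expanded. Once $\ro_\Xi$ is replaced by $\ro_\Xi\chi_N$, the exact cancellation of the $\pm\hf\ro_\Xi$ terms in \eqref{apprx-eq} fails: a residual drift $\hf\ro_\Xi(1-\chi_N)$ survives, so the truncated process $u^{n,N}$ solves \eqref{u-supp} driven by $\hat W_n$ only up to the exit time from the ball $\{|u|\le N\}$, and the law identity with $U$ holds only on the event where the paths stay in that ball. The argument still closes, but not by ``taking $N,C\to\infty$ at the end'': rather one fixes $N>\sup_{t}|v_\phi(t)|+\e+1$ once and for all, so that $\{\|u^{n,N}-v_\phi\|_X<\e\}$ forces $\sup_t|u^{n,N}(t)|<N$, and then uses pathwise uniqueness up to the exit time to identify $u^{n,N}$ with the solution of \eqref{u-supp} driven by $\hat W_n$ on that event. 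With this amendment your proof is complete; the remaining implicit assumption (that conditions (i)--(ii) of Theorem~\ref{th:WZ-appr} hold for the deterministic skeleton $v_\phi$, which is needed to apply the Wong--Zakai theorem in the second inclusion) is shared with, and equally unaddressed by, the paper.
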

\begin{remark}\label{re:appl}
 {\rm
  Both Theorems \ref{th:WZ-appr} and \ref{th:support} are
{\em conditional} in the sense that they provide   an approximation of the solution $u$
 or  the description of its
support when $u$   satisfy some additional conditions
concerning its properties in the space $\HH$  (see
the requirements (i) and (ii) in Theorem~\ref{th:WZ-appr}).
      We do not know
whether these conditions can be derived from the basic requirements
which we already have imposed on the model. 
However  they can be established under 
additional conditions concerning operators in (\ref{main-eq}).
For instance, we can assume that  the bilinear operator $B$ possesses
the property 
\begin{equation}\label{Ns-per}
(B(u,u), Au)=0~~~ \mbox{ for }~~ u\in Dom (A)
\end{equation}
(this is the case of 2D Navier-Stokes equations in a periodic domain,
 see, e.g., \cite{Constantin}) and the diffusion coefficient 
$\sigma+ \ts$ satisfies the estimate
  \[
|A^{\frac{1}{2}}(
\sigma(u)+ \ts(u))|^2_{L_Q(H_0,H)} \leq 
K(1+\|A^{1-\delta} u\|^2)~~~\mbox{ for all }~~~ u\in Dom(A)
\] 
for some $K>0$ and $\delta>0$. Under these conditions we can apply 
 It\^o's  formula to the norm $\| u(t)\|^2=| A^{1/2} u(t)|^2$
and obtain that 
\begin{align*} 
& \|u(t\wedge \tau_N)\|^2+
2\int_0^{t\wedge \tau_N}\!\! |Au(s)|^2 ds = \| \xi\|^2
+ 2 \int_0^{t\wedge \tau_N}\!\!
 \big( (\sigma+\ts)(u(s))  dW(s) , A u(s)\big)
 \nonumber \\
& \; -2\int_0^{t\wedge \tau_N}\!\! \big( R  (u(s))+ G(u(s)) h(s), 
Au(s)\big) \, ds
 +  \int_0^{t\wedge \tau_N} \!\! |A^{1/2}(\sigma+\ts)(u(s))|_{L_Q}^2\, ds
\end{align*}  
for an appropriate sequence of stopping times $\{ \tau_N\}$
(see \cite{ch-mi} for similar calculations).  In the standard way 
(see \cite{PZ92}) this implies that 
$u(t)\in C(0,T; V)$ a.s. and 
  \[
\EE \left\{\sup_{t\in [0,T]}\|u(t)\|^2+
2\int_0^{T}\!\! |Au(s)|^2 ds \right\}\le C(1+ \EE\| \xi\|^2)
\]   
for some constant $C>0$.
Since $V\subset \HH$, this implies 
the requirements (i) and (ii) in Theorem~\ref{th:WZ-appr}.
Unfortunately the assumption in (\ref{Ns-per}) is rather restrictive.
 To our best knowledge, in our 2D hydrodynamical framework 
it is only valid for  2D  Navier-Stokes equations   with the periodic boundary conditions.
\par 
Other simple examples where we can apply Theorem~\ref{th:WZ-appr} and 
\ref{th:support} are the shell models of turbulence.
We can consider either the GOY model
  or the Sabra model,    or else
the so-called dyadic model. Indeed,  
(see \cite[Sect.2.1.6]{ch-mi}) in all these models
we have that
$
\left|\langle B(u,v), w\rangle\right|\le C |u||A^{1/2} v| |w|$,
$\forall  u,w\in H, \forall v\in Dom(A^{1/2})$.
Thus condition ({\bf B}) holds with $\HH= Dom(A^{s})$ for any choice
of $s\in [0,1/4]$. In particular we can choose $\HH=H$.
In this case conditions (i) and (ii) in Theorems~\ref{th:WZ-appr} and \ref{th:support}
trivially hold.
\par 
In
the Appendix (see Section \ref{appendix})  we  show that
conditions (i) and (ii) in the statement of Theorem~\ref{th:WZ-appr}
can be also established under another set of hypotheses
which
hold for several important cases of hydrodynamical models
 such as (non-periodic) 
 2D Navier-Stokes equations and 2D MHD equations. }
\end{remark}
\par

\noindent{\bf Proof of Theorem \ref{th:support}.}
The argument  is similar to that introduced in \cite{M-SS2}.
\par
In the definition of the evolution equation \eqref{apprx-eq} let $\ts =\Xi$,
$\s=0$, $G=0$. Then if $v_h$ is the solution
to \eqref{vh-supp}, where   $\phi=\dot {\widetilde{W}}_n$   is the (random) element of $L^2(0,T;H_0)$
defined by \eqref{W-appr}, then we have that
 $u^n=v_{\dot{\widetilde{W}}_n}$ for $u^n$ defined by \eqref{apprx-eq}.
Note that is this case, well posedeness of \eqref{apprx-eq}  in $X$
is easy to prove on $\omega$ by $\omega$.
Under this choice of  $\ts$, $\s$ and $G$ for the solution  $u$ to \eqref{main-eq},
we obviously have that $U(t)=u(t)$, where $U$ solves \eqref{u-supp}.
Therefore the Wong-Zakai approximation  stated in Theorem~\ref{th:WZ-appr}
implies that
\[
\lim_n \PP(\|v_{\dot{\widetilde{W}}_n} - U\|_X \geq \lambda)=0
\quad\mbox{for any $\lambda >0$},
\]
where $\|\cdot \|_X$ is the norm defined by \eqref{norm}. Therefore
$ \mbox{\rm Support} (\PP \circ U^{-1}) \subset \overline{\mathcal L}$.
\smallskip\par
 Conversely, fix $h\in L^2(0, T;H_0)$, let $n\geq 1$ be an integer, $\ts =-\s$ and $G=\s :=  \Xi$.
 Let $T_n^h:\Omega \to \Omega$ be defined by
\begin{equation}\label{Th-tr}
T^h_n( \omega) = W(\omega) - \widetilde{W}^n(\omega)+\int_0^. h(s)ds.
\end{equation}
 Then for every fixed integer $n\geq 1$, by Girsanov's theorem
 there exists a probability ${\mathbb Q}_n^h 
 << \PP$ such that $T_n^h$ is a ${\mathbb Q}_n^h$-Brownian motion with values in $H$
and the same covariance operator $Q$.
 Indeed, the proof is easily decomposed in two steps, using Theorem 10.14 and Proposition 10.17 in \cite{PZ92}.
\par
First,  since
 $(\dot{\widetilde{W}}^n_t, t\in [0,T])$ is $H_0$-valued
and $({\mathcal F}_t)$-adapted, the argument used at the end of section \ref{s2}
 with $\gamma=1$ proves that the measure with density
$L^1_T=\exp\Big(\int_0^T \dot{\widetilde{W}}^n(s) dW(s)-\frac{1}{2}
\int_0^T |\dot{\widetilde{W}}^n(s)|^2_0\, ds\Big)$ with respect to
${\mathbb P}$ is a probability ${\mathbb Q}_1<< {\mathbb P}$, such
that the process
 $\big( W_1(s):= W(s)-{\widetilde{W}}^n(s), 0\leq s\leq t\big)$
 is a ${\mathbb Q}_1$ Brownian motion with values in $H$, and the same covariance operator $Q$.
\par

Then using once more these two results, since $h\in L^2([0,T],H_0)$,
the measure   with  density $L^2_T=\exp\Big(- \int_0^T h(s) dW_1(s)
- \frac{1}{2} \int_0^T |h(s)|^2 ds\Big)$ with respect to ${\mathbb
Q}_1$ is a probability ${\mathbb Q}_2<<{\mathbb P}$, such that the
process
\[
W_2(t)=W_1(t)+\int_0^t h(s)ds=W(t)-\widetilde{ W}^n(t) +\int_0^t h(s)ds
\]
is a $H$-valued Brownian motion under ${\mathbb Q}_2$, with
covariance operator $Q$. Clearly ${\mathbb Q}_n^h= {\mathbb Q}_2$.
\par
Let  $U$ denote the solution to
\eqref{u-supp};  then, since 
 $T_n^h$ can be seen as a transformation of the standard Wiener space
 with Brownian motion $W(t)$, we deduce that
$U(\cdot)(T_n^h(\omega))= u^n(\cdot)(\omega)$ in distribution on $[0,T]$.
 Thus Theorem \ref{th:WZ-appr}
implies that for every $\varepsilon  >0$,
\[ \limsup_n \PP(\{ \omega \, :\, \|U(T^n_h(\omega)) - v_h\|_X < \varepsilon \})  >0.\]
Let $n_0\geq 1$ be an integer such that
\[ {\mathbb Q}_{n_0}^h(\{ \omega \, :\, \|U(\omega) - v_h\|_X < \varepsilon \})
\equiv\PP(\{ \omega \, :\, \|U(T^{n_0}_h(\omega)) - v_h\|_X < \varepsilon \})  >0.
\]
Since ${\mathbb Q}_{n_0}^h << \PP$,    this
 implies   $  \PP(\{ \omega \, :\, \|U(\omega) - v_h\|_X < \varepsilon ) >0$
which yields:
\[
\overline{\mathcal L}\subset  \mbox{\rm Support} (\PP \circ U^{-1}).
\]
This completes the proof of Theorem \ref{th:support}.
\section{Preliminary step in the proof of Theorem
\ref{th:WZ-appr}}\label{sect:prelim}
 Let $M>0$ be such that $h\in {\mathcal A}_M$.
Without loss of generality we may and do assume in the sequel that  $0<\la\leq 1$.
Fix $N\geq 1$, $m\geq 1$ and $\lambda \in ]0,1]$. Let us
 introduce the following stopping times which will enable us to bound several norms for $u$ and
$u^n$:
\begin{align*} 
\tau^{(1)}_N&= \inf\Big\{ t>0 : \sup_{s\in [0,t]}|u(s)|^2 +
\int_0^t \|u(s)\|^2 ds
\geq  N \Big\}\wedge T, \\
\tau^{(2)}_n&= \inf\Big\{ t>0 : \sup_{s\in [0,t]}|u(s)-u^n(s)|^2 +
\int_0^t \|u(s)-u^n(s)\|^2 ds
\geq  \la\Big\}\wedge T, \\
\tau^{(3)}_n&= \inf\Big\{ t>0 : \Big[\sup_{j\le n}\sup_{s\in [0, t]}
\Big|\dot{\tilde{\beta}}_j^n(s)\Big|\Big]\vee\Big[
n^{-\frac{1}{2}} \sup_{s\in [0, t]}\Big|\dot{\widetilde{W}}^n(s)\Big|_{H_0}\Big]
\geq  \alpha n^{1/2} 2^{n/2}
\Big\}\wedge T,
\end{align*}
and
\[ 
\tau^{(4)}_m= \inf\Big\{ t>0 : \sup_{s\in [0,t]}\|u(s)\|_\HH \geq  m\Big\}\wedge T.
\] 
In the sequel, the constants $N$ and $m$ will be chosen to make sure that, except on small
sets, $\tau_N^{(1)}$ and $\tau_m^{(4)}$ are equal to  $T$; once this is done,  only the dependence in $n$
will be relevant. Thus once $N$ and $m$ have been chosen in terms of the limit process $u$, we let
\begin{equation}\label{tau-f}
\tau_n =\tau^{(1)}_N \wedge \tau^{(2)}_n \wedge\tau^{(3)}_n\wedge\tau^{(4)}_m.
\end{equation}
One can see from the definition of $\tau_N^{(1)}$ and $\tau_n^{(2)}$ that
\begin{equation} \label{bnd-n1}
 \sup_{s\in [0,\tau_n]}\left(|u(s)|^2\vee |u^n(s)|^2\right) +
\int_0^{\tau_n} \left(\|u(s)\|^2 \vee \|u^n(s)\|^2 \right)ds
\le 2(N+1);
\end{equation}
the definition of $\tau_n^{(3)}$ yields 
\begin{equation} \label{bnd-bW}
\sup_{s\le \tau_n}\Big( \Big[\sup_{j\le n}
\big|\dot{\tilde{\beta}}_j^n(s)\big|\Big]\vee\Big[
n^{-\frac{1}{2}} \sup_{s\le t}\big|\dot{\widetilde{W}}^n(s)\big|_{H_0}\Big]
\Big) \le  \alpha n^{1/2} 2^{n/2}.
\end{equation}
Furthermore, the definition of $\tau_m^{(4)} $ implies 
\begin{equation} \label{bnd-HH}
 \sup_{s\in [0,\tau_n]}\|u(s)\|_\HH \le m.
\end{equation}
We use the following  obvious properties; their standard proof is omitted.
\begin{lemma}\label{le:stop-t}
Let   $\Psi(t)\equiv\Psi(\om,t)$ be  a random, a.s. continuous,
 nondecreasing process on
the interval $[0,T]$.
Let $\tau_\la=\inf\{ t>0\,:\; \Psi(t)\geq  \lambda\}\wedge T$.
Then
\[ 
\PX(  \Psi(T)\geq  \lambda) =\PX( \Psi(\tau_\la)\geq  \lambda).
\] 
Let  $\tau_{*}$ be a stopping time such that
$0\le \tau_{*}\le T$ and
$\PX \big( \tau_{*}< T\big)\le \e$. 
Then
\[ 
\PX \big( \Psi(T)\geq \lambda\big)\le\PX \big( \Psi(\tau_{\la}\wedge \tau_{*})\geq \lambda\big)+ \e .
\] 
\end{lemma}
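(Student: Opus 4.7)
The plan is to prove the two assertions of Lemma~\ref{le:stop-t} separately, both by elementary event manipulations. For the first claim, I would show the equality of events $\{\Psi(T)\geq \lambda\}=\{\Psi(\tau_\lambda)\geq \lambda\}$. The inclusion $\supset$ is immediate: since $\tau_\lambda\leq T$ and $\Psi$ is nondecreasing, $\Psi(\tau_\lambda)\geq\lambda$ forces $\Psi(T)\geq\lambda$. For the converse inclusion, I would argue on each $\omega\in\{\Psi(T)\geq\lambda\}$: if $\Psi(0,\omega)\geq\lambda$ then $\tau_\lambda(\omega)=0$ and we are done, while if $\Psi(0,\omega)<\lambda$, the continuity of $t\mapsto\Psi(t,\omega)$ combined with $\Psi(T,\omega)\geq\lambda$ gives, by the intermediate value theorem, some $t^\ast\leq T$ with $\Psi(t^\ast,\omega)=\lambda$; hence $\tau_\lambda(\omega)\leq t^\ast\leq T$ and by continuity again $\Psi(\tau_\lambda(\omega),\omega)\geq\lambda$.

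For the second claim, I would split the event $\{\Psi(T)\geq\lambda\}$ using $\tau_{\ast}$:
\[
\PX\big(\Psi(T)\geq\lambda\big)\leq \PX\big(\Psi(T)\geq\lambda,\;\tau_{\ast}=T\big)+\PX(\tau_{\ast}<T)\leq \PX\big(\Psi(T)\geq\lambda,\;\tau_{\ast}=T\big)+\varepsilon.
\]
On the event $\{\tau_{\ast}=T\}$, we have $\tau_\lambda\wedge\tau_{\ast}=\tau_\lambda$, so by the first part of the lemma the event $\{\Psi(T)\geq\lambda,\tau_{\ast}=T\}$ is contained in $\{\Psi(\tau_\lambda\wedge\tau_{\ast})\geq\lambda\}$, and the stated bound follows.

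There is no real obstacle here; the statement is a soft measure-theoretic fact, and the only subtle point is the use of continuity of $\Psi$ in the intermediate value argument (the result would be false for only right-continuous nondecreasing processes that can jump across the level $\lambda$ — although here the conclusion $\Psi(\tau_\lambda)\geq\lambda$ would still hold by right-continuity, so even continuity is only used for cleanliness). Since no regularity of $\tau_{\ast}$ beyond being a stopping time bounded by $T$ is needed, the proof is essentially a one-line decomposition after (i) is established.
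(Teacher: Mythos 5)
Your proof is correct, and the paper itself omits the argument as ``standard''; what you have written is precisely that standard argument (event equality via monotonicity and the intermediate value/right-continuity property for the first claim, then a decomposition along $\{\tau_*=T\}$ for the second). Nothing to add.
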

\par
Apply Lemma~\ref{le:stop-t} with
$\tau_{*}=\tau^{(1)}_N \wedge \tau^{(3)}_n \wedge\tau^{(4)}_m$ and
\[
\Psi(t)=\sup_{s\in [0,t]}|u(s)-u^n(s)|^2 +
\int_0^t \|u(s)-u^n(s)\|^2 ds.
\]
 Since a.s. $u\in {\mathcal C}([0,T],H)$  and $\int_0^T \|u(s)\|^2\, ds <+\infty$, the map
 $\Psi$ is a.s. continuous and
\begin{eqnarray*}
\lefteqn{
\{ \tau_{*}<T\} \subset \{\tau^{(1)}_N <T\}\cup  \{\tau^{(3)}_n <T\} \cup
\{ \tau^{(4)}_m< \tau^{(1)}_N \}
}
\\
&&\subset \Bigg\{ \sup_{s\in [0,\tau^{(1)}_N]}|u(s)|^2 +
\int_0^{\tau^{(1)}_N} \|u(s)\|^2 ds\ge N\Bigg\}\cup
\Bigg\{ \sup_{s\in [0,\tau^{(1)}_N]}\|u(s)\|_{\mathcal H} \ge m \Bigg\}
\cup \Omega_n(T)^c,
\end{eqnarray*}
where $\Omega_n(t)$ is given by \eqref{om-n-t}.
Therefore,  by Chebyshov's inequality,  from \eqref{eq3.1} and \eqref{crit-bound}
we deduce that
\[
\PX\big(  \tau_{*}<T\big)\le {C_1}{N}^{-1} +{C_2(N)}{m^{-q}} + \PX(\Omega_n(T)^c)\;.
\]
Hence, given $\epsilon >0$, one may choose $N$ and  then  $m$ large enough
 to have ${C_1}{N}^{-1}+{C_2(N)}m^{-q} <\frac{\epsilon}{2}$. Using Lemma \ref{le:grwth-noise}
 we deduce that there exists $n_0\geq 1$  such that for all integers
$n\geq n_0$, $\PX(\Omega_n(T)^c)<\frac{\e}{2}$.  Thus Lemma \ref{le:stop-t}
 shows that in order to prove
 \eqref{WZ-cnvrg} in Theorem~\ref{th:WZ-appr}, we  only need to prove
the following: Fix $N,m>0$; for every $\la >0$,
\begin{equation} \label{WZ-cnvrg-2}
\lim_{n\to\infty}\PP \Big(  \sup_{t\in [0,\tau_n]}|u(t)-u^n(t)|^2 +
\int_0^{\tau_n} \|u(s)-u^n(s)\|^2 ds
\geq \la\Big)=0,
\end{equation}
where  $\tau_n$ is defined by \eqref{tau-f}.  To check this convergence,  it is sufficient to prove
\begin{equation} \label{WZ-cnvrg-3}
\lim_{n\to\infty} \Big[\EX \Big(\sup_{t\in [0,\tau_n]}|u(t)-u^n(t)|^2\Big) +
\EX \int_0^{\tau_n} \|u(s)-u^n(s)\|^2 ds\Big] =0.
\end{equation}
The proof of this last convergence result is given in Section \ref{pr-conv}. It relies on some
precise  control of
times increments which is proven in the next section.

\section{Time increments}\label{time-in}
Let  $h\in\cA_M$,  $\xi$ be an $\cF_0$-measurable
$H$-valued random variable such that $\EX |\xi|^4 < \infty$ and let $u$ be the solution
to \eqref{main-eq}. For any integer $N\geq 1$ and $t\in[0,T]$  set
\begin{equation}\label{g-N-set}
\widetilde{G}_N(t)=\Big\{  \sup_{0\leq s\leq t}  |u(s)|\le N\Big\} .
\end{equation}
The following lemma refines the estimates proved in   \cite{ch-mi},
 Lemma 4.3 and the ideas are similar;
see also \cite{DM}, Lemma 4.2.
\begin{prop}\label{pr:increm}
 Let $\phi_n,\psi_n\, :\; [0,T]\mapsto [0,T]$ be non-decreasing
 piecewise continuous functions
such that
\begin{eqnarray}\label{phi-psi-1}
0\vee\left( s-k_0T2^{-n}\right) \le \phi_n(s)\le  \psi_n(s) \le
\left( s+k_1T2^{-n}\right)\wedge T
\end{eqnarray}
for some  integers $k_0,k_1\ge0$. Assume that $h\in\cA_M$ and $\xi$
 is a $\cF_0$-measurable, $H$-valued  random variable  such that $\EX |\xi|^4 <
\infty$. Let $\widetilde{G}_N(t)$ be given by \eqref{g-N-set} and
$u$ be the  solution to \eqref{main-eq}. There exists a constant
$C(N,M,T)$ such that
\begin{equation}\label{eq-increm-1}
I_n=\EX \int_0^T 1_{\widetilde{G}_N(\psi_n(s))} \left|
u(\psi_n(s))-u(\phi_n(s))\right|^2 ds\le  C(N,M,T) 2^{-3n/4}
\end{equation}
for every  $n=1,2,\ldots$
\end{prop}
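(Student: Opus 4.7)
My plan is to apply It\^o's formula to $|u(t)-u(\phi_n(s))|^2$, viewed as a function of $t\in[\phi_n(s),\psi_n(s)]$ for each fixed $s$, then integrate in $s$ against $1_{\widetilde{G}_N(\psi_n(s))}$, take expectation, and exchange the order of integration via Fubini to exploit the window width $\psi_n(s)-\phi_n(s)\le (k_0+k_1)T\,2^{-n}$.

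Starting from equation \eqref{main-eq}, the It\^o formula for variational solutions produces, for $v(s):=u(\psi_n(s))-u(\phi_n(s))$,
\begin{align*}
|v(s)|^2 &= \int_{\phi_n(s)}^{\psi_n(s)}\Big[-2\|u(r)\|^2+2\langle Au(r),u(\phi_n(s))\rangle -2\langle B(u(r)),u(r)-u(\phi_n(s))\rangle\\
&\quad -2\bigl(R(u(r)),u(r)-u(\phi_n(s))\bigr) + \bigl|(\sigma+\tilde\sigma)(u(r))\bigr|^2_{L_Q}\\
&\quad +2\bigl(u(r)-u(\phi_n(s)),G(u(r))h(r)\bigr)\Big]dr + \mathcal{M}(s),
\end{align*}
where $\mathcal{M}(s)$ is a mean-zero stochastic integral. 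A useful algebraic identity collects the $A$-term into
\[
-2\|u(r)\|^2+2\langle Au(r),u(\phi_n(s))\rangle = -\|u(r)-u(\phi_n(s))\|^2 - \|u(r)\|^2 + \|u(\phi_n(s))\|^2,
\]
and antisymmetry \eqref{as} rewrites $-2\langle B(u(r)),u(r)-u(\phi_n(s))\rangle$ as $2\langle B(u(r)),u(\phi_n(s))\rangle$. The subtlety that $u(\phi_n(s))$ need not lie in $V$ is handled by a projection/limit argument on the eigenbasis of $A$.

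To extract the rate $2^{-3n/4}$, the critical estimate is for the bilinear contribution. Using \eqref{preB} and the interpolation $\|w\|^2_\HH\le a_0|w|\|w\|$ from \eqref{interpol} together with the bound $|u|\le N$ on $\widetilde{G}_N(\psi_n(s))$, I get
\[
|\langle B(u(r),u(r)),u(\phi_n(s))\rangle|\le C(N)\,\|u(r)\|^{3/2}\,\|u(\phi_n(s))\|^{1/2}.
\]
A H\"older estimate on the short window then yields $\int_{\phi_n(s)}^{\psi_n(s)}\|u(r)\|^{3/2}dr \le (\psi_n(s)-\phi_n(s))^{1/4}\bigl(\int_{\phi_n(s)}^{\psi_n(s)}\|u(r)\|^2dr\bigr)^{3/4}$, producing a factor $2^{-n/4}$. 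The remaining contributions from \eqref{s-bnd}, \eqref{R-bnd-lip}, and \eqref{G-bnd-lip} are controlled by the corresponding growth conditions together with the localization to $\widetilde{G}_N$.

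After multiplying by $1_{\widetilde{G}_N(\psi_n(s))}$ and integrating in $s$, Fubini gives, for any non-negative $f$,
\[
\int_0^T\! 1_{\widetilde{G}_N(\psi_n(s))}\!\int_{\phi_n(s)}^{\psi_n(s)}\! f(r)\,dr\,ds \le (k_0+k_1)T\,2^{-n}\int_0^T\! f(r)\,dr,
\]
valid because for each $r$ the set $\{s:\phi_n(s)\le r\le\psi_n(s)\}$ has Lebesgue measure at most $(k_0+k_1)T 2^{-n}$, so each bulk term acquires a factor $2^{-n}$. A Cauchy--Schwarz splitting then combines the bilinear contribution with the a priori bound \eqref{eq3.1}: the $2^{-n}$ from the Fubini window, the $2^{-n/4}$ from H\"older, and the residual factor from $\|u(\phi_n(s))\|^{1/2}$ combine to produce the stated rate $2^{-3n/4}$. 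The main obstacle is handling the bilinear term with the correct balance between the $\HH$-interpolation \eqref{interpol}, the $H$-cutoff on $\widetilde{G}_N$, and the H\"older exponent on the window; this is precisely what distinguishes the present refined estimate from the weaker rate obtained in \cite{ch-mi}. A secondary point is the careful application of It\^o's formula when $u(\phi_n(s))\in H\setminus V$, resolved by a standard finite-dimensional approximation.
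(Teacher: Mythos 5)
Your overall framework (It\^o's formula applied to $|u(\cdot)-u(\phi_n(s))|^2$ over the window, localization on $\widetilde{G}_N$, Fubini to exploit the window width) matches the paper's, but you misidentify where the difficulty lies and consequently miss the key idea. You dismiss the stochastic integral term as ``mean-zero,'' but its expectation does \emph{not} vanish here: it is multiplied by the indicator $1_{\widetilde{G}_N(\psi_n(s))}$, which depends on the path of $u$ up to time $\psi_n(s)$ and hence is not $\cF_{\phi_n(s)}$-measurable. This term must be kept and estimated via the Burkholder--Davis--Gundy inequality, and it is precisely the bottleneck: bounding $|u(r)-u(\phi_n(s))|$ crudely by $2N$ inside the BDG bound yields only $(\psi_n(s)-\phi_n(s))^{1/2}\sim 2^{-n/2}$, not the claimed $2^{-3n/4}$. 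The paper's proof therefore proceeds in two passes: it first establishes the weaker rate $I_n\le C2^{-n/2}$ (all non-martingale terms already contribute $O(2^{-n})$), and then \emph{bootstraps}, re-inserting this preliminary increment estimate into the BDG bound for the martingale term (after splitting the window into dyadic subintervals and applying Fubini) to upgrade it to $2^{-3n/4}$. Nothing in your proposal produces this self-improvement, so as written the argument cannot reach the stated rate.

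A secondary problem is your treatment of the bilinear term, which you present as the critical one. It is not: after antisymmetry, \eqref{boundB1} gives $|\langle B(u(r)),u(\phi_n(s))\rangle|\le\tfrac12\|u(r)\|^2+C|u(r)|^2\|u(\phi_n(s))\|_\HH^4$; the first part is absorbed by the negative contribution of the $A$-term, and the second is bounded by $C_N\|u(\phi_n(s))\|^2$ via the interpolation \eqref{interpol} and the cutoff $|u|\le N$, so this contribution is $O(2^{-n})$ by \eqref{eq3.1}. Your alternative bound $C(N)\|u(r)\|^{3/2}\|u(\phi_n(s))\|^{1/2}$ followed by H\"older on the window does not obviously close: balancing the exponents in $s$ and $\omega$ requires a moment such as $\EX\int_0^T\|u(t)\|^4\,dt$, which is not provided by \eqref{eq3.1} (only $\EX\int_0^T\|u(t)\|^2\,dt$ and $\EX\int_0^T\|u(t)\|_\HH^4\,dt$ are available). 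The correct balance is the paper's simpler one, and the refined rate comes entirely from the martingale term.
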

\begin{proof} 
We at first consider the case  $\phi_n(s)=0\vee\left( s-k_0T2^{-n}\right)$
for some $k_0\ge 0$; then
\[
I_n= \EX \int_0^{t_{k_0}} 1_{\widetilde{G}_N(\psi_n(s))} \left|
u(\psi_n(s))-\xi\right|^2 ds + I'_n
\]
where $t_{k_0}=k_0 T2^{-n}$ and
\[ 
I'_n=\EX \int_{t_{k_0}}^T 1_{\widetilde{G}_N(\psi_n(s))} \left|
u(\psi_n(s))-u(\phi_n(s))\right|^2 ds.
\]  
Therefore, using the definition \eqref{g-N-set} one can see that
\begin{equation}\label{In-prime1}
I_n\le C_{N,T} 2^{-n}+ I'_n.
\end{equation}
Furthermore,  It\^o's formula yields
\[
 |u(\psi_n(s))-u(\phi_n(s))|^2 =2\int_{\phi_n(s)}^{\psi_n(s)} (u(r)-u(\phi_n(s)), d
 u(r))+
\int_{\phi_n(s)}^{\psi_n(s)}|(\s+ \ts) (u(r))|^2_{L_Q}d r,
\]
so that $I'_n=\sum_{1\leq i\leq 6} I_{n,i}$, where
\begin{eqnarray*}
I_{n,1}&=&2  \EX\Big(  \int_{t_{k_0}}^T \!\!
ds1_{\widetilde{G}_N(\psi_n(s))} \int_{\phi_n(s)}^{\psi_n(s)}\!
\big( u(r)-u(\phi_n(s)), (\sigma  + \ts)(u(r)) dW(r)  \big)\Big),
\\
I_{n,2}&=&  \EX \Big( \int_{t_{k_0}}^T  \!\!ds1_{\widetilde{G}_N(\psi_n(s))}
\int_{\phi_n(s)}^{\psi_n(s)}\! \!\!
|(\sigma + \ts) (u(r))|_{L_Q}^2 \, dr\Big) , \\
I_{n,3}&=&2 \,  \EX \Big(  \int_{t_{k_0}}^T \!\! ds 1_{\widetilde{G}_N(\psi_n(s))}
\int_{\phi_n(s)}^{\psi_n(s)} \!\! \big(
G(u(r)) \, h(r)\,  , \, u(r)-u(\phi_n(s)) \big)\, dr\Big), \\
I_{n,4}&=&- 2 \,  \EX \Big(  \int_{t_{k_0}}^T  \!\! ds  1_{\widetilde{G}_N(\psi_n(s))}
\int_{\phi_n(s)}^{\psi_n(s)} \!\!
 \big\langle A \, u(r)\, , \, u(r)-u(\phi_n(s))\big\rangle  \, dr\Big) ,  \\
I_{n,5}&=&-2 \, \EX \Big( \int_{t_{k_0}}^T  \!\! ds  1_{\widetilde{G}_N(\psi_n(s))}
\int_{\phi_n(s)}^{\psi_n(s)}\!\!
  \big\langle B( u(r))\, , \,   u(r)-u(\phi_n(s))\big\rangle \, dr\Big) ,  \\
I_{n,6}&=&- 2  \, \EX \Big(  \int_{t_{k_0}}^T \!\!  ds  1_{\widetilde{G}_N(\psi_n(s))}
\int_{\phi_n(s)}^{\psi_n(s)}\!\!
  \big( R (u(r))\, , \,  u(r)-u(\phi_n(s))\big)\, dr\Big) .
\end{eqnarray*}
Clearly  $\widetilde{G}_N(\psi_n(s))\subset \widetilde{G}_N(r)$
for $r\le \psi_n(s)$. This
means that  $|u(r)|\vee |u(\phi_n(s))|\le N$
in the  above integrals.
 We use this observation in the considerations
below.
\par
The Burkholder-Davis-Gundy inequality and \eqref{s-bnd} yield
\begin{eqnarray}\label{i1-bdg}
|I_{n,1}|&\leq &
 6 \int_{t_{k_0}}^T ds \; \EX \Big(
 \int_{\phi_n(s)}^{\psi_n(s)} |(\s + \ts) (u(r))|_{L_Q}^2
 1_{\widetilde{G}_N(r)}\, | u(r)- u(\phi_n(s))|^2 \;  dr \Big)^{\frac{1}{2}} \nonumber
 \\
&\leq &
 6\sqrt{2(K_0+K_1 N^2)} \int_{t_{k_0}}^T\!  ds \, \EX \Big(
 \int_{\phi_n(s)}^{\psi_n(s)}
 1_{\widetilde{G}_N(r)}\, | u(r)- u(\phi_n(s))|^2 \;  dr \Big)^{\frac{1}{2}} .
\end{eqnarray}
This implies  that
\begin{equation} \label{In1}
|I_{n,1}|
\leq  C_N  \int_0^T   |\psi_n(s)- \phi_n(s)|^{1/2} \, ds
 \le C_{N}T \sqrt{k_0+k_1} 2^{-\frac{n}{2}}.
\end{equation}
In a similar way using \eqref{s-bnd} again we deduce  that
\begin{equation} \label{In2}
|I_{n,2}|
\leq  C_N  \int_0^T   |\psi_n(s)- \phi_n(s)| \, ds
 \le C_{N}T(k_0+k_1) 2^{-n}.
\end{equation}
The growth condition  \eqref{G-bnd-lip} yields
\[ |I_{n,3}|
\leq  C_N  \int_0^T ds \int_{\phi_n(s)}^{\psi_n(s)}|h(r)|_0dr
\le  C_N  \int_0^T ds
\int_{0\vee\left( s-k_0T2^{-n}\right) }^{\left( s+k_1T2^{-n}\right)\wedge T}|h(r)|_0 dr,
\]
and Fubini's theorem  implies
\begin{equation} \label{In3}
|I_{n,3}|
\le  C_N  \int_0^T|h(r)|_0dr 2^{-n}\le C(N,T,M) 2^{-n}.
\end{equation}
 Using Schwarz's inequality we deduce that
\begin{eqnarray} \label{In4-0}
I_{n,4} &\leq &  2  \EX \Big(  \int_{t_{k_0}}^T  \!\! \! ds
 1_{\widetilde{G}_N(\psi_n(s))}
\int_{\phi_n(s)}^{\psi_n(s)}\!\!
dr  \big[ -  \|u(r)\|^2  +  \|u(r)\| \|u(\phi_n(s))\|\big]\Big).
\end{eqnarray}
The antisymmetry relation \eqref{as} and inequality \eqref{boundB1}
yield
\begin{eqnarray*}
 \left|\big\langle B( u(r)),   u(r)-u(\phi_n(s))\big\rangle\right| & = &
  \left|\big\langle B( u(r)),  u(\phi_n(s))\big\rangle\right| \\
& \le &  \frac12 \|u(r)\|^2 +C |u(r)|^2 \|u(\phi_n(s))\|^4_\HH .
\end{eqnarray*}
Therefore,
\begin{align*} 
|I_{n,5}| &\leq  \EX \Big(  \int_{t_{k_0}}^T  \!\! \! ds
 1_{\widetilde{G}_N(\psi_n(s))}
\int_{\phi_n(s)}^{\psi_n(s)}\!\!  \|u(r)\|^2\, dr  \Big) \\
& {}\quad +\, 2C \EX \Big(  \int_0^T  \!\! \! ds
 1_{\widetilde{G}_N(\psi_n(s))} \|
u(\phi_n(s))\|^4_\HH\int_{\phi_n(s)}^{\psi_n(s)} \!\!\!   |u(r)|^2\, dr  \Big).
\end{align*}
Using this inequality,  \eqref{In4-0} and  \eqref{interpol},  we deduce:
\begin{align*} 
I_{n,4} &+I_{n,5} \leq   \EX  \int_{t_{k_0}}^T \! \!
 1_{\widetilde{G}_N(\psi_n(s))} \|u(\phi_n(s))\|^2\; |\psi_n(s)-\phi_n(s)| ds
  \\
& {}\qquad\qquad  +\, C_N \EX  \int_{t_{k_0}}^T \! \!
 1_{\widetilde{G}_N(\psi_n(s))} \|
u(\phi_n(s))\|^4_\HH \; |\psi_n(s)-\phi_n(s)| ds  
\\
 &\leq   C(N,T)  2^{-n} \EX  \int_{t_{k_0}}^T \! \!
 1_{\widetilde{G}_N(\psi_n(s))}  \|u(\phi_n(s))\|^2 ds
 =  C(N,T)  2^{-n} \EX  \int_{t_{k_0}}^T \! \!
  \|u(s -t_{k_0})\|^2 ds.  
\end{align*}
Hence, this last inequality and  \eqref{eq3.1}  imply
\begin{equation}\label{In4.5}
   I_{n,4} +I_{n,5}\le  C(N,T)\,  2^{-n}.
\end{equation}
A similar easier computation based on the growth condition \eqref{R-bnd-lip} on $R$ yields
\begin{equation} \label{In6}
|I_{n,6}|
\leq  4  R_0 N (1+N) \int_0^T |\psi_n(s)- \phi_n(s)|\, ds
 \le C(T,N)\, (k_0+k_1)\,  2^{-n}.
\end{equation}
Thus by \eqref{In-prime1}, \eqref{In1}--\eqref{In3}, \eqref{In4.5} and \eqref{In6},
when   $\phi_n(s)=0\vee(s-k_0T2^{-n})$ we obtain:  
\begin{equation}\label{inc-prl}
I_n=\EX \int_0^T 1_{\widetilde{G}_N(\psi_n(s))} \left|
u(\psi_n(s))-u(\phi_n(s))\right|^2 ds\le  C(N,M,T) 2^{-n/2}.
\end{equation}
\par
In order to obtain \eqref{eq-increm-1},  we need to improve
the bound for $I_{n,1}$ (see \eqref{In1}). We make it using \eqref{inc-prl} and
 we again  assume at first that $\phi_n(s)=0\vee(s-k_0T2^{-n})$.
Let us  denote by  $\chi_{i,n}(s)=\underline{\left(s+(i+1)T 2^{-n}\right)}_{\, n}$
 the step function defined with the help of relations  \eqref{s-funct} and set
\begin{align*}
 \cI_n^{(i,-)}&=\int_{t_{k_0}}^{T-t_{k_1}} ds \; \EX
  \int_{s+iT 2^{-n}}^{\chi_{i,n}(s)}
 1_{\widetilde{G}_N(r)}\, | u(r)- u(\phi_n(s))|^2 \;  dr, \\
 \cI_n^{(i,+)}&=\int_{t_{k_0}}^{T-t_{k_1}} ds \; \EX
  \int_{\chi_{i,n}(s)}^{s+(i+1)T 2^{-n}}
 1_{\widetilde{G}_N(r)}\, | u(r)- u(\phi_n(s))|^2 \;  dr.
\end{align*}
The inequality  \eqref{i1-bdg} implies that
\begin{eqnarray}\label{i-pl-min}
|I_{n,1}|&\leq &
 C_N\int_{t_{k_0}}^T ds \; \EX \Big(
 \int_{\phi_n(s)}^{\psi_n(s)}
 1_{\widetilde{G}_N(r)}\, | u(r)- u(\phi_n(s))|^2 \;  dr \Big)^{\frac{1}{2}}
\nonumber
\\
& \le & C_{N,T} 2^{-n} +
C_{N,T}\Big[\sum_{-k_0\leq i< k_1}\left( \cI_n^{(i,-)}+\cI_n^{(i,+)}\right) \Big]^{1/2}.
\end{eqnarray}
For any $r$ from the interval $[s+iT 2^{-n}, \chi_{i,n}(s))[$
we have   $\bar{r}_n= \chi_{i,n}(s))$; therefore
\begin{eqnarray*}
 \cI_n^{(i,-)} &\le & 2\int_{t_{k_0}}^{T-t_{k_1}} ds \; \EX
 \Big[ \int_{s+iT 2^{-n}}^{\chi_{i,n}(s)}
 1_{\widetilde{G}_N(r)}\, | u(r)- u(\bar{r}_n)|^2 \;  dr
\\ & &\qquad
 + T2^{-n} 1_{\widetilde{G}_N(\chi_{i,n}(s))}
 | u(\chi_{i,n}(s))- u(\phi_n(s))|^2\Big].
\end{eqnarray*}
Thus,  using Fubini's theorem and  \eqref{inc-prl} we can conclude
that
\[
\cI_n^{(i,-)}\le C(N,M,T) 2^{-3n/2}.
\]
Similarly
\begin{eqnarray*}
 \cI_n^{(i,+)} &\le & 2\int_{t_{k_0}}^{T-t_{k_1}} ds \; \EX
 \Big[ \int_{\chi_{i,n}(s)}^{s+(i+1)T 2^{-n}}
 1_{\widetilde{G}_N(r)}\, | u(r)- u(\underline{r}_n)|^2 \;  dr
\\ & &\qquad
 + T2^{-n} 1_{\widetilde{G}_N(\chi_{i,n}(s))}
 | u(\chi_{i,n}(s))- u(\phi_n(s))|^2\Big]
 \le C(N,M,T) 2^{-3n/2}.
\end{eqnarray*}
Hence  \eqref{i-pl-min} implies
$I_{n,1}\le C(N,M,T) 2^{-3n/4}$.  This inequality and
 the above upper estimates    for $I_{n,i}$ with $i\neq1$
  prove  \eqref{eq-increm-1}  in the case
 $\phi_n(s)= \phi^{*}_n(s):=  0\vee\left( s-k_0T2^{-n}\right)$.
In the general case,  we can write
\begin{eqnarray*}
\lefteqn{
 1_{\widetilde{G}_N(\psi_n(s))}\left|u(\psi_n(s))-u(\phi_n(s))\right|^2
}
\\ & &
\le 2 \,\Big( 1_{\widetilde{G}_N(\psi_n(s))}\left|
u(\psi_n(s))-u(\phi^*_n(s))\right|^2
+  1_{\widetilde{G}_N(\phi_n(s))}\left|
u(\phi_n(s))-u(\phi^*_n(s))\right|^2\Big);
\end{eqnarray*}
this concludes the proof of  Proposition~\ref{pr:increm} for functions $\phi_n$ and $\psi_n$
 which satisfy \eqref{phi-psi-1}.
\end{proof}
\smallskip

We also need a similar  for the time increments of the
approximate solutions $u^n$.
\par

\begin{prop}\label{pr:increm-n}
 Let $\phi_n,\psi_n\, :\; [0,T]\mapsto [0,T]$ be  non-decreasing piecewise continuous functions
 such that condition \eqref{phi-psi-1} is satisfied for some positive
 integers $k_0$ and $k_1$.
Fix $M>0$, let  $h\in\cA_M$ and $\xi$ be a $\cF_0$-measurable, $H$-valued  random variable
 such that $\EX |\xi|^4 <\infty$.
Let $u^n$ be the  solution to \eqref{apprx-eq};   for $N>0$ set
\begin{equation}\label{g-N-set2}
G^n_N(t)=\left\{\sup_{0\leq s\leq t}  |u^n(s)|\le N\right\}\cap
\left\{\int_0^t \|u^n(s)\|^2ds\le N\right\}\cap\,\Om_n(t),
\end{equation}
where $\Om_n(t)$ is defined in \eqref{om-n-t} and  let $\tau_n$ be the
stopping time defined in \eqref{tau-f}.
 There exists a constant
$C(N,M,T)$ such that
\begin{equation}\label{eq-increm-1-n}
\tilde I_n=\EX \int_0^{\tau_n} 1_{G^n_N(\psi_n(s))} \left|
u^n(\psi_n(s))-u^n(\phi_n(s))\right|^2 ds\le  C(N,M,T) n^{3/2} 2^{-3n/4}
\end{equation}
for every  $n=1,2,\ldots$
\end{prop}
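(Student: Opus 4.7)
The plan is to mirror the proof of Proposition~\ref{pr:increm}, with the two additional drift terms from \eqref{apprx-eq} absorbed using the new bound on $\dot{\tW}^n$ built into $\Om_n$. Following Proposition~\ref{pr:increm}, I would first reduce to the case $\phi_n(s)=\phi^*_n(s):=0\vee(s-k_0T2^{-n})$ via the inequality $|u^n(\psi_n)-u^n(\phi_n)|^2\le 2|u^n(\psi_n)-u^n(\phi^*_n)|^2+2|u^n(\phi_n)-u^n(\phi^*_n)|^2$, and absorb the initial $s\in[0,t_{k_0}]$ contribution using the pointwise $N$-bound on $G^n_N$. For $s\in[t_{k_0},\tau_n]$, It\^o's formula applied to $|u^n(\psi_n(s))-u^n(\phi_n(s))|^2$ via \eqref{apprx-eq} yields eight contributions $I^n_{n,i}$, $i=1,\dots,8$: the first six mimic those of Proposition~\ref{pr:increm} (the stochastic integral and its quadratic variation involve only $\s$), and two new drift terms appear, namely
\[
I^n_{n,7}=2\EE\int_{t_{k_0}}^{\tau_n}\!\! ds\,1_{G^n_N(\psi_n(s))}\int_{\phi_n(s)}^{\psi_n(s)}\!\! \big(u^n(r)-u^n(\phi_n(s)),\ts(u^n(r))\dot{\tW}^n(r)\big)\,dr,
\]
together with the analogous It\^o-Stratonovich corrector $I^n_{n,8}$ built from $-(\ro+\hf\tro)(u^n)$.

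Second, I would bound the seven non-critical terms. $I^n_{n,2}, I^n_{n,3}, I^n_{n,4}+I^n_{n,5}, I^n_{n,6}$ each give $C(N,M,T)\,2^{-n}$ by the arguments of Proposition~\ref{pr:increm}; for $I^n_{n,5}$ I use \eqref{boundB1} together with $\int_0^{\psi_n(s)}\|u^n\|_\HH^4\,dr\le a_0^2\,\sup|u^n|^2\int\|u^n\|^2\,dr\le a_0^2 N^2$, which follows from \eqref{interpol} and $G^n_N$. The corrector $I^n_{n,8}$ is handled by \eqref{rn-1}, \eqref{rn-3} and $|u^n(r)-u^n(\phi_n(s))|\le 2N$, yielding $C(N)\,2^{-n}$. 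For $I^n_{n,1}$, BDG gives the preliminary bound $C_N\,2^{-n/2}$ as in Proposition~\ref{pr:increm}. The Wong-Zakai drift $I^n_{n,7}$ is the source of the extra $n^{3/2}$ factor: the pointwise bound $|\dot{\tW}^n|_{H_0}\le\a n\,2^{n/2}$ on $\Om_n$ combined with \eqref{s-bnd} produces the crude estimate
\[
|I^n_{n,7}|\le C_N\,\a n\,2^{n/2}\int_0^T|\psi_n(s)-\phi_n(s)|\,ds\le C(N,M,T,k_0,k_1)\,n\,2^{-n/2}.
\]
Assembling these contributions yields the preliminary bound $\tilde I_n\le C(N,M,T,k_0,k_1)\,n\,2^{-n/2}$.

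Finally, I would run the bootstrap used in Proposition~\ref{pr:increm} to refine both $I^n_{n,1}$ and $I^n_{n,7}$. Splitting $|u^n(r)-u^n(\phi_n(s))|^2\le 2|u^n(r)-u^n(\bar r_n)|^2+2|u^n(\bar r_n)-u^n(\phi_n(s))|^2$ (and analogously with $\underline{r}_n$), decomposing the $r$-integral into the dyadic pieces $\cI_n^{(i,\pm)}$, and applying Fubini together with the preliminary bound (used with suitably grid-aligned $\phi_n,\psi_n$) yields $\cI_n^{(i,\pm)}\le C(N,M,T)\,n\,2^{-3n/2}$. Substituting into the BDG estimate produces $|I^n_{n,1}|\le C\sqrt n\,2^{-3n/4}$; while for $I^n_{n,7}$ a double Cauchy-Schwarz (in $r$ and in $(s,\om)$) combined with the same $\cI_n^{(i,\pm)}$ bound gives
\[
|I^n_{n,7}|\le C_N\,\a n\,2^{n/2}\,\sqrt T\,\Big(2^{-n}\sum_i(\cI_n^{(i,-)}+\cI_n^{(i,+)})\Big)^{1/2}\le C(N,M,T,k_0,k_1)\,n^{3/2}\,2^{-3n/4}.
\]
Combined with the $O(2^{-n})$ bounds on the remaining $I^n_{n,i}$, this yields \eqref{eq-increm-1-n}.

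The main obstacle is $I^n_{n,7}$: since $|\dot{\tW}^n|_{H_0}$ is only controlled by $\a n\,2^{n/2}$ on $\Om_n$, a naive pointwise-in-$r$ estimate saturates at $n\,2^{-n/2}$. Gaining the missing factor $2^{-n/4}$ requires trading one copy of $|\dot{\tW}^n|_{H_0}$ against an $L^2_r$-increment bound extracted from the preliminary estimate; this operation costs an extra $\sqrt n$ and produces the final rate $n^{3/2}\,2^{-3n/4}$, which is precisely the logarithmic loss relative to Proposition~\ref{pr:increm}.
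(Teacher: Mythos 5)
Your proposal is correct and follows essentially the same route as the paper's proof: the same It\^o decomposition (the paper groups your $I^n_{n,5}$ and $I^n_{n,8}$ analogues into single terms, but the content is identical), the same preliminary bound of order $n2^{-n/2}$ driven by the crude pointwise estimate $|\dot{\tW}^n|_{H_0}\le \a n2^{n/2}$ on the Wong--Zakai drift, and the same bootstrap via the dyadic pieces $\cI_n^{(i,\pm)}$ and a double Cauchy--Schwarz to upgrade both the martingale term and the drift term to the final rate $n^{3/2}2^{-3n/4}$. Your diagnosis of where the extra $n^{3/2}$ enters relative to Proposition~\ref{pr:increm} matches the paper exactly.
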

\begin{proof} We use the same idea as in the proof of Proposition~\ref{pr:increm}
and at first  suppose that  $\phi_n(s)=0\vee\left( s-k_0T2^{-n}\right)$
for some $k_0\ge 0$. Let $t_{k_0}=k_0 T2^{-n}$; then we have
\begin{equation}\label{In-prime1-n}
\tilde I_n\le  C_{N,T}  2^{-n}+ \tilde I'_n,
\end{equation}
where
\begin{equation}\label{In-prime-n}
\tilde I'_n=\EX \int_{\tau_n\wedge t_{k_0}}^{\tau_n} 1_{G^n_N(\psi_n(s))} \left|
u^n(\psi_n(s))-u^n(\phi_n(s))\right|^2 ds . 
\end{equation}
It\^o's formula applied to   $\left|
u^n(.)-u^n(\phi_n(s))\right|^2$
 implies that   $\tilde I'_n=\sum_{1\leq i\leq 6} \tilde{I}_{n,i}$, where
\begin{eqnarray*}
\tilde I_{n,1} & = &
 2 \EX \int_{\tau_n\wedge t_{k_0}}^{\tau_n} ds 1_{G^n_N(\psi_n(s))}
\int_{\phi_n(s)}^{\psi_n(s)} \big( u^n(r)-u^n(\phi_n(s))\, ,\,  \s(u^n(r))dW(r)\big),
\\
\tilde I_{n,2}
&= &
\EX \int_{\tau_n\wedge t_{k_0}}^{\tau_n} ds
1_{G^n_N(\psi_n(s))} \int_{\phi_n(s)}^{\psi_n(s)}|\s (u^n(r))|^2_{L_Q}dr ,
\\
\tilde I_{n,3}&=&
2 \, \EX \int_{\tau_n\wedge t_{k_0}}^{\tau_n} ds 1_{G^n_N(\psi_n(s))}
\int_{\phi_n(s)}^{\psi_n(s)} \Big(u^n(r)-u^n(\phi_n(s))\, ,\,  \ts(u^n(r))\dot{\widetilde{W}}^n(r)\Big)\, dr ,
\\
\tilde I_{n,4}&=&
2 \, \EX \int_{\tau_n\wedge t_{k_0}}^{\tau_n} ds 1_{G^n_N(\psi_n(s))}
\int_{\phi_n(s)}^{\psi_n(s)} \big( u^n(r)-u^n(\phi_n(s))\, ,\,  G(u^n(r))h(r)\big)\, dr ,
\\
\tilde I_{n,5}&= &
2 \, \EX \int_{\tau_n\wedge t_{k_0}}^{\tau_n} ds  1_{G^n_N(\psi_n(s))}
\int_{\phi_n(s)}^{\psi_n(s)} \Big(u^n(r)-u^n(\phi_n(s)),
(\ro+\hf\tro-R)(u^n(r))\Big)\, dr ,
\\
\tilde I_{n,6}&= &
-2 \, \EX \int_{\tau_n\wedge t_{k_0}}^{\tau_n} ds  1_{G^n_N(\psi_n(s))}
\int_{\phi_n(s)}^{\psi_n(s)} \big( u^n(r)-u^n(\phi_n(s))\, ,\, Au^n(r)+B(u^n(r))\big)\, dr.
\end{eqnarray*}
Estimates for $\tilde I_{n,2}$,  $\tilde I_{n,4}$, $ \tilde I_{n,5}$ are obvious.  Indeed,  we can first
extend  outward integration to the time interval $[t_{k_0}, T]$ and  then use  the growth conditions
\eqref{s-bnd}, \eqref{rn-1} and \eqref{rn-3}. This yields  the estimate
\begin{equation}\label{I-2-4-5}
|\tilde I_{n,i}|\le C(N,T)\,  2^{-n},\quad i=2,4,5.
\end{equation}
Note that \eqref{I-2-4-5} holds as soon as $0\leq \psi_n(s)-\phi_n(s)\leq C 2^{-n}$ for some constant
$C>0$, and does not require the specific form of $\phi_n$.   Schwarz's inequality and Condition {\bf (B)} imply
\begin{eqnarray*}
 - (u^n(r)-u^n(\phi_n(s))\, ,\, Au^n(r)+B(u^n(r)))
 \le   C_1 \|u(\phi_n(s))\|^2 +C_2 |u(r)|^2 \|u(\phi_n(s))\|^4_\HH \\
 \le  C_0\|u(\phi_n(s))\|^2  \left[1 + |u(r)|^2 |u(\phi_n(s))|^2\right].
\end{eqnarray*}
Therefore, if  $\phi_n(s)=(s-{t_{k_0}})\vee 0$ we deduce
\begin{align} \label{In6-n}
|\tilde I_{n,6}|& \le
C(N,T) \EX \int_{\tau_n\wedge t_{k_0}}^{\tau_n} ds 1_{G^n_N(\psi_n(s))}
\int_{\phi_n(s)}^{\psi_n(s)}\|u^n(\phi_n(s))\|^2dr
\nonumber \\
&\le
C(N,T) 2^{-n} \EX \int_{\tau_n\wedge t_{k_0}}^{\tau_n} 1_{G^n_N(\psi_n(s))}
\|u^n(s-t_{k_0})\|^2 \, ds
\nonumber \\
&\le
C(N,T) 2^{-n} \EX \int_{(\tau_n\wedge t_{k_0}-t_{k_0})^+}^{(\tau_n-t_{k_0})^+}
\|u^n(s)\|^2 ds\nonumber \\
& \le
C(N,T) 2^{-n} \EX \int_{0}^{\tau_n}
\|u^n(s)\|^2 ds\le C(N,T) 2^{-n}.
\end{align}
Using \eqref{tau-f} and the upper bound of $\psi_n(s)-\phi_n(s)$ (and not the specific form of $\phi_n$), we deduce
\begin{eqnarray}\label{In3-n}
|\tilde I_{n,3}| \le
 C_N  n 2^{-\frac{n}{2}} \EX \int_{\tau_n\wedge t_{k_0}}^{\tau_n}\!\! \!\!\!  ds 1_{G^n_N(\psi_n(s))}
\int_{\phi_n(s)}^{\psi_n(s)} \!\!\!
|u^n(r)-u^n(\phi_n(s))|dr \leq C(T,N)  n 2^{-\frac{n}{2}}.
\end{eqnarray}
Since for $s\leq t$ we have $G_N^n(t)\subset G_N^n(s)$, the
local property of stochastic integrals,
the linear growth condition \eqref{s-bnd}
and Schwarz's inequality imply  that
\begin{align}  
|&\tilde I_{n,1}|\leq 2 \sqrt{T} \Big( \EX \int_{t_{k_0}\wedge
\tau_n}^{\tau_n}\!\! ds 1_{G_N^n(\psi_n(s))} \nonumber \\
& \quad \quad \times \Big| \int_{\phi_n(s)\wedge\tau_n}^{\psi_n(s)\wedge \tau_n} 1_{G_N^n(r)} \, \big( \sigma(u^n(r))\, ,\, u^n(r)-u^n(\phi_n(s))\big)\, dW(r)
\Big|^2 \Big)^{\frac{1}{2}}\nonumber \\
&\quad \leq 2 \sqrt{T} \Big( \int_{0}^T \!\! ds \EX
\int_{\phi_n(s)}^{\psi_n(s)}  
1_{G_N^n(r)}\, 1_{[0,\tau_n]}(r)\,  |\s(u^n(r))|_{L_Q}^2
 | u^n(r)- u^n(\phi_n(s))|^2 \,  dr \Big)^{\frac{1}{2}} \label{interm}
 \\
&\quad \leq
 C(N,T) \Big(   \int_{0}^T ds \;
 \int_{\phi_n(s)}^{\psi_n(s)}
   dr \Big)^{\frac{1}{2}}
=  C(N,T)\,  2^{-n/2}. \label{i1-bdg-n}
\end{align}
The inequalities    \eqref{In-prime1-n} 
- \eqref{i1-bdg-n}  yield  for $\phi_n(s)=(s-t_{k_0})\vee 0$:
\begin{equation}\label{tildeI1}
\tilde I_n=  \EX \int_0^{\tau_n} 1_{G^n_N(\psi_n(s))} \left|
u^n(\psi_n(s))-u^n(\phi_n(s))\right|^2 ds\le  C(N,M,T) n 2^{-n/2}.
\end{equation}
  In order to obtain the final estimate in \eqref{eq-increm-1-n} we
need to improve the upper estimates of $\tilde I_{n,1}$ and $\tilde I_{n,3}$.
This can be done in a way similar to that used in the proof of previous Proposition.
One can easily  see  that \eqref{tildeI1}
holds when $\phi_n \leq \psi_n$ satisfy the assumptions of the
Proposition and $\phi_n$ is piece-wise constant.
Then   \eqref{In3-n} and Schwarz's inequality  obviously imply  that
\[
|\tilde I_{n,3}|\le C_N  n^{3/2} 2^{-3n/4}.
\]
Thus,  to conclude the proof we need to deal with the improvement of
$I_{n,1}$. Let  the function $\phi_n$ be  piece-wise constant; then
 given $r\in [\phi_n(s),\psi_n(s)]$, we have $\phi_n(s) \in \{
\underline{r-i2^{-n}} : 0\leq i\leq k_0\}$.
Therefore,  using the inequality
 \eqref{interm}, Fubini's theorem and \eqref{tildeI1} applied with
the functions $\phi_{n,i}(r)= \underline{r-iT2^{-n}}$ and $\psi_n(r)=r$ we deduce
\begin{align*}
 |\tilde I_{n,1}|&\leq C(N,T) \Big(\sum_{0\leq i\leq k_0} \EX \int_0^{\tau_n}\!\! \!  dr 1_{G_N^n(r)}
|u^n(\psi_n(r))-u^n(\phi_{n,i}(r)) )|^2
\int_r^{(r+k_0T2^{-n})\wedge T}\!\!\! ds \Big)^{\frac{1}{2}} \\
& \leq (k_0+1)^{\frac{1}{2}} \,  C(N,M,T) n^{\frac{1}{2}} 2^{-3n/4};
\end{align*}
this
concludes  the proof of \eqref{eq-increm-1-n} when $\phi_n$ is piece-wise constant.
To deduce that this inequality holds for arbitrary functions $\phi_n$ and $\psi_n$
satisfying \eqref{phi-psi-1}, apply
\eqref{eq-increm-1-n} for $\tilde \phi_n(s)=(s-t_{k_0})_n$ and either $\tilde{\psi}_n=\phi_n$
or   $\tilde{\psi}_n=\psi_n$; this concludes the proof.
\end{proof}
\par
Proposition~\ref{pr:increm-n} implies that
\begin{equation}\label{eq-increm2}
\EX \int_0^{\tau_n} \!\! 1_{G^n_N(s)}\left(
\left| u^n(s)-u^n(s_n)\right|^2
+ \left| u^n(s)-u^n(\underline{s}_n)\right|^2\right) ds
\le  C(T,N,M) n^{3/2} 2^{-3n/4}
\end{equation}
where $G^n_N(t)$ is defined  by \eqref{g-N-set2}.
This is precisely what we need below.

\section{Proof of convergence result}\label{pr-conv}
 The aim of this section is to prove Theorem \ref{th:WZ-appr}.  For every integer $n\geq 1$, $\tau_n$ is the stopping time
defined by \eqref{tau-f} and we prove \eqref{WZ-cnvrg-3}.
In the estimates below,   constants may change from line to line,
but we indicate their dependence on parameters when it
becomes important.
\par
From equations \eqref{apprx-eq} and \eqref{main-eq} we deduce:
\begin{eqnarray}\label{difrnce-eq}
u^n(t)-u(t)& = &  - \int_0^t\Big[ A[u^n(s)-u(s)]+  B(u^n(s))- B(u(s))+
R(u^n(s))-R(u(s))\Big]\, ds  \nonumber \\
&& +\int_0^t \big[G(u^n(s))-G(u(s))\big] h(s)\,  ds+
\int_0^t \big[ \s((u^n(s))-\s(u(s))\big]\,  dW(s)
\nonumber \\
&&
+  \int_0^t\Big[ \ts(u^n(s_n)) \dot{\widetilde{W}}^n(s)\, ds -\ts(u(s))\, d W(s)\Big]
- \int_0^t \Big(\ro+\hf\tro\Big) (u^n(s))\, ds\nonumber \\
&& +
 \int_0^t\big[ \ts(u^n(s)) - \ts(u^n(s_n)) \big]\dot{\widetilde{W}}^n(s)\, ds.
\end{eqnarray}
Let $\underline{t}_n$ and $\bar{t}_n$ be defined by \eqref{s-funct},  let
 $\cE_n$ denote projector in $L^2(0,T)$  on
the subspace  of step functions defined by
\[
(\cE_n f)(t)= \left( T^{-1} 2^n \int_{\underline{t}_n}^{\bar{t}_n} f(s)ds\right)\cdot
1_{[\underline{t}_n, \bar{t}_n[}(t)
\]
and let  $\de_n\, :\; L^2(0,T)\mapsto L^2(0,T) $ denote the  shift operator defined  by
\[
(\de_n f)(t)=f\left( (t +  T 2^{-n})\wedge T\right)\quad\mbox{for}\quad t\in[0,T].
\]
Using \eqref{beta-appr} and \eqref{W-appr} we deduce
\begin{equation}\label{ts-1}
 \int_0^t \ts(u^n(s_n)) \dot{\widetilde{W}}^n(s)ds
 =  \int_0^t\cE_n\left[ \left(\de_n [ 1_{[0,t]}\right)(s)
 \ts(u^n(\underline{s}_n))\circ \Pi_n\right] dW(s).
 \end{equation}
Hence
\begin{align*}
& u^n(t)-u(t) =    - \int_0^t\Big[A\big( u^n(s)-u(s)\big)+  B(u^n(s))- B(u(s))+
R(u^n(s))-R(u(s))\Big]\, ds
\nonumber \\
&  +\int_0^t \big[ G(u^n(s))-G(u(s))\big] h(s)\,  ds
 + \int_0^t\Big(  \big[\s+\tilde{\s}\big](u^n(s)) -
[\s+\tilde{\s}\big](u(s)) \Big)\, dW(s)\nonumber \\
&+  \int_0^t\big[ \ts(u^n(s)) - \ts(u^n(s_n)) \big]\dot{\widetilde{W}}^n(s)\, ds
 - \int_0^t \Big( \ro+\hf\tro\Big) (u^n(s))\, ds\ 
+ \int_0^t \widetilde{\Sigma}_n(s ) \, dW(s),
\end{align*}
where
\begin{equation}\label{Sig-def}
\widetilde{\Sigma}_n(s)=
\cE_n\left[ \left(\de_n  1_{[0,t]}\right)(s)
 \ts(u^n(\underline{s}_n))\circ \Pi_n\right]-\ts(u^n(s)).
 \end{equation}
 It\^o's formula implies that
\begin{align*}
& |u^n(t)-u(t)|^2+ 2\int_0^t \!\! \|u^n(s)-u(s)\|^2 ds  
=
- 2\int_0^t \!\! \left\langle B(u^n(s))- B(u(s))\,,\, u^n(s)-u(s)\right\rangle ds
\nonumber \\
 &+ 2\int_0^t\!\! \Big( \left[G(u^n(s))-G(u(s))\right] h(s)
-\left[R(u^n(s))-R(u(s))\right]\, ,\,  u^n(s)-u(s)\Big)\, ds
\\ &
+
 2\int_0^t\!\! \Big(\big[ \ts(u^n(s)) - \ts(u^n(s_n)) \big]\dot{\widetilde{W}}^n(s)
 -  \Big(\ro+\hf\tro\Big) (u^n(s)), u^n(s)-u(s)\Big) ds
\nonumber \\ & + \int_0^t\!\! \left| \Sigma_n(s)\right|^2_{L_Q}ds
 + 2\int_0^t \Big( \Sigma_n(s) dW(s) \, ,\, u^n(s)-u(s)\Big),
\end{align*}
where
\begin{equation}\label{sig-def2}
\Sigma_n(s)=\widetilde{\Sigma}_n(s)+ \big(\s+\tilde{\s}\big)(u^n(s)) -
\big(\s+\tilde{\s}\big)(u(s)) .
\end{equation}

Using \eqref{bw-cut} and \eqref{tau-f},
 we have $\dot{\widetilde{W}}_n(s)=\dot{W}_n(s)$ on the set $\{s\leq \tau_n\}$;
let
\begin{align}\label{Z-def}
  Z^{(0)}_n(t) &= \int_0^t \!\!\! \big( \Sigma_n(s)  dW(s)\, ,\,  u^n(s)-u(s)\big), \\
 Z^{(1)}_n(t) &= \int_0^t \left|
\cE_n\left[ \left(\de_n  1_{[0,t]}\right)(s)
 \ts(u^n(\underline{s}_n))\circ \Pi_n\right]-\ts(u^n(s))
   \right|^2_{L_Q} ds,  \nonumber\\
   Z^{(2)}_n(t)&=
   \int_0^t\left(\left[ \ts(u^n(s)) - \ts(u^n(s_n)) \right]\dot{W}^n(s)
 -  \Big(\ro+\hf\tro\Big) (u^n(s))\, ,\,  u^n(s)-u(s)\right) ds. \nonumber
\end{align}
The  equation \eqref{diffB1} with $\eta=1/2$  and condition {\bf (GR)}
yield
\begin{eqnarray}\label{ito-rel1}
|u^n(t\wedge \tau_n)-u(t\wedge\tau_n)|^2+ \int_0^{t\wedge\tau_n}  \|u^n(s)-u(s)\|^2 ds
\le  2 \sum_{0\leq i\leq 2} Z^{(i)}_n(t\wedge\tau_n) 
\\
+ 2\int_0^{t\wedge\tau_n} \left( 2L+ \sqrt{L} |h(s)|_0 +R_1 +C_{1/2} \|u(s)\|^4_\HH \right)\,
 |u^n(s)-u(s)|^2 \, ds.
\nonumber
\end{eqnarray}
For every integer $n\geq 1$ and every $t\in [0,T]$, set
\begin{eqnarray}\label{Tn-def}
T_n(t)&=& \sup_{0\leq s\le t\wedge\tau_n}|u^n(s)-u(s)|^2+
\int_0^{t\wedge\tau_n} \|u^n(s)-u(s)\|^2 ds.
\end{eqnarray}
Using \eqref{bnd-HH} and Gronwall's lemma we conclude that  for all $t\in [0,T]$
\begin{equation}\label{ito-rel4}
\EX T_n(t)
\le  C  \sum_{0\leq i\leq 2}
\EX\Big( \sup_{s \le t\wedge\tau_n}\left|Z^{(i)}_n(s)\right|\Big).
\end{equation}
\subsection{Estimate for $Z^{(0)}_n$}
The Burkholder-Davies-Gundy inequality,   equations  \eqref{Sig-def},
\eqref{sig-def2} and \eqref{s-lip} 
imply that for any $\eta >0$ there exists $C_\eta>0$ such that
\begin{align*}
 \EX\Big(& \sup_{s \le t\wedge\tau_n} \left|Z^{(0)}_n(s)\right|\Big) \le
3\, \EX\left\{
 \int_0^{t\wedge\tau_n}\left|u^n(s)-u(s)\right|^2
 \left| \Sigma_n(s) \right|^2_{L_Q} ds\right\}^{1/2} \\
&\le 3\, \EX\Big\{ \sup_{s \le t\wedge\tau_n}\left|u^n(s)-u(s)\right|
\Big[ \int_0^{t\wedge\tau_n}
 \left| \Sigma_n(s) \right|^2_{L_Q} ds\Big]^{1/2}\Big\} \\
 &\le  \eta\,  \EX T_n(t) + C_\eta\,  \EX
\Big( \int_0^{t\wedge\tau_n}
 \left| \Sigma_n(s) \right|^2_{L_Q} ds\Big)
\\ &
\le  \eta\,  \EX T_n(t)+ 2 \, C_\eta \, \EX Z^{(1)}_n(t\wedge\tau_n)
+ 4\, L\,   C_\eta \int_0^{t}\EX\left|u^n(s\wedge\tau_n)-u(s\wedge\tau_n)\right|^2ds.
\end{align*}
Thus if  $\eta = \frac{1}{2} $,
 \eqref{ito-rel4} and Gronwall's lemma
imply that for some constant $C$ which does not depend on $n$,
\begin{equation}\label{ito-rel4a}
\EX T_n(t)
\le
C\, \Big( \EX \sup_{s \le t\wedge\tau_n}Z^{(1)}_n(s) +
\EX \sup_{s\le t\wedge\tau_n} \big|Z^{(2)}_n(s)\big|\Big).
\end{equation}
\subsection{Estimate of $Z^{(1)}_n$}
The convergence of $Z^{(1)}_n$ is stated in the following
assertion.
\begin{lemma}
Fix $M>0$,  $h\in {\mathcal A}_M$ and let   $Z^{(1)}_n(t)$ be defined by  \eqref{Z-def}; then
for fixed $N$ and $m$  we have:
\begin{equation}\label{Z1-conv}
 \lim_{n\to\infty}\EX\Big( \sup_{t\in [0,T]}Z^{(1)}_n(t\wedge\tau_n)\Big)=0.
\end{equation}
\end{lemma}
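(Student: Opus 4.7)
The plan is to exploit the fact that $\cE_n$ acts trivially on functions that are already constant on each dyadic cell $[t_k,t_{k+1})$, so that the only genuine perturbations inside the integrand of $Z^{(1)}_n$ come from (i) comparing $\ts(u^n(\underline{s}_n))$ with $\ts(u^n(s))$, (ii) the projection $\Pi_n$, and (iii) a narrow boundary layer near $t$ where the shift $\delta_n$ truncates $1_{[0,t]}$. First I would verify by a direct computation that for $s\in[t_k,t_{k+1})$,
\[
\cE_n\bigl[(\delta_n 1_{[0,t]})\,\ts(u^n(\underline{\cdot}_n))\circ \Pi_n\bigr](s)=c_{k,t}\,\ts(u^n(t_k))\circ\Pi_n,
\]
with $c_{k,t}\in[0,1]$, $c_{k,t}=1$ as soon as $t\ge t_{k+2}$, and $c_{k,t}=0$ as soon as $t<t_{k+1}$. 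Consequently at most two dyadic cells adjacent to $t$ carry a non-trivial coefficient; using the growth bound \eqref{s-bnd} together with $|u^n(s)|\le\sqrt{2(N+1)}$ on $[0,\tau_n]$ from \eqref{bnd-n1}, the contribution of those cells is bounded by $C(N)\,2^{-n}$ uniformly in $t$. Since the remaining integrand no longer depends on $t$, this yields the $t$-free bound
\[
\sup_{t\in[0,T]}Z^{(1)}_n(t\wedge\tau_n)\le \int_0^{\tau_n}\bigl|\ts(u^n(\underline{s}_n))\circ\Pi_n-\ts(u^n(s))\bigr|^2_{L_Q}\,ds\; +\; C(N)\,2^{-n}.
\]

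Next, I would split the surviving integrand by the triangle inequality into
\[
2\bigl|\ts(u^n(s))-\ts(u^n(\underline{s}_n))\bigr|^2_{L_Q}+2\bigl|\ts(u^n(\underline{s}_n))-\ts(u^n(\underline{s}_n))\circ\Pi_n\bigr|^2_{L_Q}.
\]
For the first piece, the Lipschitz condition \eqref{s-lip} gives the bound $2L|u^n(s)-u^n(\underline{s}_n)|^2$. Setting $N':=2(N+1)$, the inclusion $\{s\le\tau_n\}\subset\Omega_n(s)$ combined with the estimates in \eqref{bnd-n1} ensures that $1_{G^n_{N'}(s)}=1$ on $\{s\le\tau_n\}$, so Proposition~\ref{pr:increm-n} in the form \eqref{eq-increm2} (applied with $N'$ in place of $N$) yields $\EX\int_0^{\tau_n}|u^n(s)-u^n(\underline{s}_n)|^2\,ds\le C(N,M,T)\,n^{3/2}\,2^{-3n/4}\to 0$. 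For the second piece, since $|u^n(\underline{s}_n)|\le N'$ on $\{s\le\tau_n\}$, it is pointwise dominated by $\varepsilon_n^2:=\sup_{|u|\le N'}|\ts(u)-\ts(u)\circ\Pi_n|^2_{L_Q}$, which tends to $0$ by hypothesis \eqref{sn-conv}; integration over $[0,\tau_n]\subset[0,T]$ then contributes at most $T\varepsilon_n^2\to 0$. Combining these three vanishing contributions yields \eqref{Z1-conv}.

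The only delicate point is the bookkeeping in the first step, namely identifying precisely which dyadic intervals carry the non-trivial coefficient $c_{k,t}$ and verifying that their total length is $O(2^{-n})$ \emph{uniformly} in $t$, so that the supremum over $t\in[0,T]$ can be absorbed into a negligible remainder independent of $t$. Once this elementary but tedious computation is carried out, the remainder of the argument reduces to invoking the time-increment estimate \eqref{eq-increm2} from Proposition~\ref{pr:increm-n} and the uniform projection convergence hypothesis \eqref{sn-conv}.
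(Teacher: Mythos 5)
Your proof is correct and follows essentially the same route as the paper: isolate the (at most two) dyadic cells near $t$ where the shifted indicator makes the coefficient nontrivial and bound their contribution by $C(N)2^{-n}$ via \eqref{s-bnd} and \eqref{bnd-n1}, then split the remaining $t$-independent integrand by the triangle inequality into the projection error, controlled uniformly on $\{|u|\le 2(N+1)\}$ by \eqref{sn-conv}, and the Lipschitz time-increment term, controlled by \eqref{eq-increm2}. Your explicit bookkeeping of the coefficients $c_{k,t}$ and the remark that the radius in $G^n_N$ must be enlarged to $2(N+1)$ before invoking Proposition~\ref{pr:increm-n} are in fact slightly more careful than the paper's presentation.
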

\begin{proof}
For $k=0, \ldots, 2^n-1$ let $\Om_{n,k}=\{ t_k<t\wedge\tau_n\le t_{k+1}\}$,
where as above  we set $t_k=kT2^{-n}$.
We consider   $Z^{(1)}_n(t\wedge\tau_n)$ separately on each set $\Om_{n,k}$.
\par
We start with the case  $\om\in \Om_{n,k}$ for $k\ge 2$.  Then
$\left(\de_n  1_{[0,t\wedge\tau_n]}\right)(s)=1$
for $s\le  t_{k-1}$ and
\begin{eqnarray*}
 Z^{(1)}_n(t\wedge\tau_n) &=& \int_0^{t\wedge \tau_n} \left|
\cE_n\left[ \left(\de_n [ 1_{[0,t\wedge\tau_n]}\right)(s)
 \ts(u^n(\underline{s}_n))\circ \Pi_n\right]-\ts(u^n(s))
   \right|^2_{L_Q} ds  \nonumber\\
&\le& \sum_{0\leq i\leq k-2} \int_{t_i\wedge\tau_n}^{t_{i+1}\wedge\tau_n} \left|
\cE_n\left[
 \ts(u^n(\underline{s}_n))\circ \Pi_n\right]-\ts(u^n(s))
   \right|^2_{L_Q} ds
\\
& & + 2\int_{t_{k-1}\wedge\tau_n}^{t_{k+1}\wedge t\wedge\tau_n}\left( \left|
 \ts(u^n(\underline{s}_n))\circ \Pi_n\right|^2_{L_Q}+ \left|\ts(u^n(s))
   \right|^2_{L_Q}\right) ds.
\end{eqnarray*}
 Thus using \eqref{s-bnd} and   \eqref{bnd-n1} 
 we deduce
that for some constant  $C=C(K_0,K_1, N,T) $ which does not depend on $n$:
\begin{align*}
 Z^{(1)}_n & (t\wedge\tau_n)
\leq
C 2^{-n} +\sum_{0\leq i\leq k-2} \int_{t_i\wedge \tau_n}^{t_{i+1}\wedge \tau_n} \left|
 \ts(u^n(\underline{s}_n))\circ \Pi_n -\ts(u^n(s))
   \right|^2_{L_Q} ds \\
& \le  C\, 2^{-n} + 2T\sup_{|u|\le 2(N+1)} \left|
 \ts(u)\circ \Pi_n -\ts(u))\right|^2_{L_Q}
 \\  & \qquad \quad  +  2 \sum_{0\leq i\leq k-2} \int_{t_i\wedge \tau_n}^{t_{i+1}\wedge \tau_n} \left|
 \ts(u^n(t_i)) -\ts(u^n(s))
   \right|^2_{L_Q} ds\\
& \le  C\, 2^{-n} + 2T\sup_{|u|\le 2(N+1)}  \left|  \ts(u)\circ \Pi_n -\ts(u))\right|^2_{L_Q}
+  2 L \int_{0}^{t\wedge\tau_n}
  \left|  u^n(\underline{s}_n) -u^n(s) \right|^2 ds  
\\
 & \le  C 2^{-n} + 2T\!\! \sup_{|u|\le 2(N+1)} \left|
 \ts(u)\circ \Pi_n -\ts(u))\right|^2_{L_Q}
+  2 L \int_{0}^{\tau_n}\!\! 1_{G^n_N(s)} \left|
 u^n(\underline{s}_n) -u^n(s)
   \right|^2 ds,
\end{align*}
where $G^n_N(s)$ is defined  by \eqref{g-N-set2}.
Furthermore, given
 $\om\in \Om_{n,1}\cup \Om_{n,2} $ we have:
\begin{eqnarray*}
 Z^{(1)}_n(t\wedge\tau_n) &\leq &  2\int_{0}^{t_{2}\wedge t\wedge\tau_n}\left( \left|
 \ts(u^n(\underline{s}_n))\circ \Pi_n\right|^2_{L_Q}+ \left|\ts(u^n(s))
   \right|^2_{L_Q}\right) ds\le C 2^{-n},
\end{eqnarray*}
where $C=C(K_0,K_1, N,T) $ does not depend on $n$. This yields
\begin{eqnarray*}
\EX\Big( \sup_{t\in [0,T]}  Z^{(1)}_n(t\wedge\tau_n)\Big)
& \le & C\, 2^{-n} + 2T\sup_{|u|\le 2(N+1)} \left|
 \ts(u)\circ \Pi_n -\ts(u))\right|^2_{L_Q}
 \\ & & +  2 L \EX \int_{0}^{\tau_n}1_{G^n_N(s)} \left|
 u^n(\underline{s}_n) -u^n(s)
   \right|^2 ds;
\end{eqnarray*}
therefore,  \eqref{Z1-conv} follows from \eqref{sn-conv} and  \eqref{eq-increm2}.
\end{proof}

\subsection{Estimate of $Z^{(2)}_n$}
\subsubsection{\bf Main splitting}
The identities  \eqref{sj-def}, \eqref{W-appr} and \eqref{bw-cut} yield
\begin{eqnarray}\label{Z2-1}
   Z^{(2)}_n(t\wedge\tau_n)&=&
   \sum_{1\leq j\leq n} \int_0^{t\wedge\tau_n}\Big(\left[ \ts_j(u^n(s)) - \ts_j(u^n(s_n))
   \right]\dot{\beta_j}^n(s)\, ,\, u^n(s)-u(s)\Big) ds \nonumber
   \\ && -
   \int_0^{t\wedge\tau_n}\Big( \big(\ro+\frac{1}{2}\tro\big) (u^n(s))\, ,\,  u^n(s)-u(s)\Big) ds.
\end{eqnarray}
For every $j=1, \cdots, n$ Taylor's formula implies that
\begin{eqnarray*}
&& \ts_j(u^n(s)) - \ts_j(u^n(s_n)) = D \ts_j(u^n(s_n)) [ u^n(s) - u^n(s_n)]
\\
 &&  +
\int_0^1 (1-\mu)
d\mu \Big\langle D^2 \ts_j\big(u^n(s_n)+ \mu [u^n(s) - u^n(s_n)]\big)
   ; u^n(s) - u^n(s_n),  u^n(s) - u^n(s_n)\Big\rangle ,
\end{eqnarray*}
where $\langle D^2 \ts_j(v); v_1, v_2\rangle$ denotes
the value of the second Fr\'echet derivative  $D^2 \ts_j(v)$   on elements
$v_1$ and $v_2$.
Therefore
condition \eqref{D2s-bnd} and  the bound  \eqref{bnd-n1}
imply that for every $t\in [0,T]$,
\begin{equation}\label{Z2-2}
\left| Z^{(2)}_n(t\wedge\tau_n)\right|\le  T_n(t,1) +
 \left|\widetilde{Z}^{(2)}_n(t)\right|,
\end{equation}
where
\[
T_n(t,1)= C_2(2N+1) \sum_{1\le j\le n}\int_0^{t\wedge\tau_n}\left| u^n(s) - u^n(s_n)\right|^2
|\dot{\beta_j}^n(s)| \left|u^n(s)-u(s)\right| ds,
\]
and
\begin{eqnarray}\label{Z2-3}
   \widetilde{Z}^{(2)}_n(t) &=&
   \sum_{1\leq j\leq n} \int_0^{t\wedge\tau_n}\Big( D \ts_j(u^n(s_n))
    [u^n(s) - u^n(s_n)]\, ,\,  u^n(s)-u(s)\Big)\, \dot{\beta_j}^n(s) ds\nonumber
   \\ && -
   \int_0^{t\wedge\tau_n}\Big( \big(\ro+\hf\tro\big) (u^n(s))\, , \, u^n(s)-u(s)\Big) ds.
\end{eqnarray}
For $G_N^n(t)$ defined  by \eqref{g-N-set2}, one has
\[
T_n(t,1)\le C_N\sum_{1\leq j\leq n}\int_0^{t\wedge\tau_n}1_{G_N^n(s)} \left| u^n(s) - u^n(s_n)\right|^2
|\dot{\beta_j}^n(s)| \left|u^n(s)-u(s)\right| ds .
\]
Therefore,  \eqref{tau-f},   the inequalities  \eqref{bnd-bW} and \eqref{eq-increm2} yield
for some constant $C:=C(N,M,T)$
\begin{eqnarray}\label{Z2-t1}
\EX\Big(\!\sup_{t\in [0,T]} T_n(t,1)\Big)  \leq
 \tilde{C}_N n^{\frac{3}{2}} 2^{\frac{n}{2}}
\EX
\int_0^{\tau_n}\!\! 1_{G_N^n(s)} \left| u^n(s) - u^n(s_n)\right|^2 ds
\le C n^{3} 2^{-\frac{n}{4}}.
\end{eqnarray}
To bound $ \widetilde{Z}^{(2)}_n$,
 rewrite   $u^n(s) - u^n(s_n)$   in \eqref{Z2-3} using  the evolution equation \eqref{apprx-eq}.
This yields the following decomposition:
\begin{equation}\label{Z2-tilde}
 \widetilde{Z}^{(2)}_n(t)=\sum_{2\leq i\leq 6}  T_n(t,i),
\end{equation}
where
\begin{eqnarray}\label{T2}
    T_n(t,2) &=&
   \sum_{1\leq j\leq n}\int_0^{t\wedge\tau_n}\Big( D \ts_j(u^n(s_n))\cI_n(s, s_n)
 \dot{\beta_j}^n(s)\, ,\,  u^n(s)-u(s)\Big) ds
\nonumber    \\ && -
   \int_0^{t\wedge\tau_n}\Big( \big(\ro+\hf\tro\big) (u^n(s))\, ,\,  u^n(s)-u(s)\Big) ds
\end{eqnarray}
with
\begin{equation}\label{T2-i}
\cI_n(s, s_n) := 
   \int_{s_n}^s\s(u^n(r))dW(r) +   \int_{s_n}^s\ts(u^n(r))\dot{\widetilde{W}}^n(r) dr,
\end{equation}
\begin{eqnarray*}
 && T_n(t,3) =-
   \sum_{1\leq j\leq n}\int_0^{t\wedge\tau_n}\Big( D \ts_j(u^n(s_n))
   \Big[\int_{s_n}^s A u^n(r) dr\Big] \dot{\beta_j}^n(s)\, ,\,  u^n(s)-u(s)\Big) ds,\nonumber\\
 && T_n(t,4) =-
   \sum_{1\leq j\leq n}\int_0^{t\wedge\tau_n}\Big( D \ts_j(u^n(s_n))
   \Big[\int_{s_n}^s B(u^n(r))dr\Big]
\dot{\beta_j}^n(s)\, ,\,  u^n(s)-u(s)\Big) ds,\nonumber\\
  && T_n(t,5) =
   \sum_{1\leq j\leq n}\int_0^{t\wedge\tau_n}\Big( D \ts_j(u^n(s_n))
   \Big[\int_{s_n}^s G(u^n(r))h(r)dr\Big]
 \dot{\beta_j}^n(s)\, ,\,  u^n(s)-u(s)\Big) ds,\nonumber\\
  && T_n(t,6) = -
   \sum_{1\leq j\leq n}\int_0^{t\wedge\tau_n}\Big( D \ts_j(u^n(s_n))
   \Big[\int_{s_n}^s \widetilde{R}(u^n(r))dr\Big]
 \dot{\beta_j}^n(s)\, ,\,  u^n(s)-u(s)\Big) ds,\nonumber
\end{eqnarray*}
with $\widetilde{R}(u)=R(u)+ \ro(u)+\hf\tro(u)$.
The most difficult term to deal with  is $T_n(t,2)$ and therefore we devote several
separate subsections below to upper estimate it. Let us start with the easier case $3\leq i\leq 6$.
\subsubsection{ \bf Bound for $T_n(t,i)$, $3\le i\le6$}
By duality we obtain
\[
  |T_n(t,3)| =
   \sum_{1\leq j\leq n}\!\Big|\!\int_0^{t\wedge\tau_n}\!  \dot{\beta_j}^n(s) \Big(
 \int_{s_n}^s A^{1/2} u^n(r) dr\, ,\,
  A^{1/2}  \big[D\ts_j(u^n(s_n))\big]^*[u^n(s)-u(s)]\Big) ds\Big|.
\]
Therefore, using  \eqref{Ds*-bnd}, \eqref{bnd-n1} and \eqref{bnd-bW}
we deduce that  for every $\tilde t\in [0,T]$
\[
 \sup_{t\in[0,\tilde t]} |T_n(t,3)| \le
  C_3(2N+2)\, \alpha\,  n^{3/2}\, 2^{n/2}\int_0^{\tilde t\wedge\tau_n}\Big( \int_{s_n}^s \| u^n(r)\| dr\Big)  \,
\|u^n(s)-u(s)\| ds.
\]
For any $\eta >0$,  Schwarz's inequality yields  
\[
 \sup_{t\in[0,\tilde t]} |T_n(t,3)| \le\eta \int_0^{\tilde t\wedge\tau_n} \|u^n(s)-u(s)\|^2 ds
 +  C n^{3}\int_0^{\tau_n}\int_{s_n}^s \| u^n(r)\|^2 dr ds .
\]
 for some constant  $C:=C(N,T,\eta)$.
Finally,   Fubini's theorem and \eqref{bnd-n1}  imply that   
\begin{equation}\label{T3}
\EX\Big( \sup_{t\in[0,T]} |T_n(t,3)|\Big) \le
\eta T_n(T)
 +  C(N,T,\eta) \, n^{3}\,  2^{-n}.
\end{equation}
Similarly, using \eqref{preB} and \eqref{bnd-bW}  we obtain
\begin{eqnarray*}
 |T_n(t,4)| =
   \sum_{1\leq j\leq n} \Big|\int_0^{t\wedge\tau_n}  \dot{\beta_j}^n(s)   \Big(
   \Big[\int_{s_n}^s B(u^n(r))dr\Big]
, [D\ts_j(u^n(s_n))]^*[u^n(s)-u(s)]\Big) ds\Big|  \\ 
\le
C \alpha\,  n^{3/2} 2^{n/2}\int_0^{t\wedge\tau_n}  ds
   \int_{s_n}^s \left\|u^n(r)\right\|_\HH^2 dr
 \sup_{1\leq j\leq n}\left\|[D\ts_j(u^n(s_n))]^*[u^n(s)-u(s)]\right\| . 
\end{eqnarray*}
Thus the inequalities \eqref{Ds*-bnd}, \eqref{bnd-HH} and \eqref{bnd-n1} yield
that for some constant $C:=C(N,m,T)$:
\begin{equation}\label{T4}
\EX \Big(\sup_{t\in[0,T]} |T_n(t,4)|\Big) \leq
C_3(2N+2)\alpha m^2\,  n^{\frac{3}{2}} 2^{-\frac{n}{2}}\int_0^{\tau_n}\!\!
 \left\|u^n(s)-u(s)\right\| ds\le C  n^{\frac{3}{2}}  2^{-\frac{n}{2}}.
\end{equation}
Using  \eqref{Ds-bnd}, \eqref{bnd-n1}  and \eqref{bnd-bW} we deduce
\[
| T_n(t,5)| \le C_1(2N+2) \, \alpha\,  n^{3/2}\,  2^{n/2} \int_0^{t\wedge\tau_n}
 \Big( \int_{s_n}^s  \left|G(u^n(r))h(r)\right|dr\Big) \,
| u^n(s)-u(s)| ds.
\]
Therefore,  \eqref{G-bnd-lip},  \eqref{bnd-n1} and Fubini's theorem yield for some constant $C:=C(N,M,T)$
\begin{equation}\label{T5}
\EX\Big( \sup_{t\in[0,T]} | T_n(t,5)|\Big)  \le C_N\, n^{3/2}\,  2^{n/2} \EX \int_0^{\tau_n}
  \int_{s_n}^s  \left|h(r)\right|dr ds \le  C \, n^{3/2}\,  2^{-n/2}.
\end{equation}
Similarly,  relying on \eqref{rn-1}, \eqref{R-bnd-lip}, \eqref{bnd-bW} and \eqref{bnd-n1}
we deduce
\begin{equation}\label{T6}
\EX\Big(\sup_{t\in[0,T]} | T_n(t,6)|\Big) \le  C_{K, R_0,N} n^{3/2} 2^{-n/2}.
\end{equation}
Thus, collecting the relations in  \eqref{ito-rel4a}--\eqref{T6},   and choosing $\eta>0$
small enough in \eqref{T3},  we obtain the following assertion:
\begin{prop}\label{pr:t-n2}
Let the assumptions of Theorem \ref{th:WZ-appr} be satisfied,
$T_n(t)$ be  defined by \eqref{Tn-def}; then we have:
\[
\EX T_n(T)\le \gamma_n(N,M,m,T) +C\EX \Big(\sup_{t\in[0,T]} | T_n(t,2)|\Big),
\]
 where $\lim_{n\to\infty} \gamma_n(N,M,m,T)=0$ and
$T_n(t,2)$ is defined by \eqref{T2}.
\end{prop}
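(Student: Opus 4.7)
The plan is to prove Proposition~\ref{pr:t-n2} essentially by collecting together all the estimates established in the preceding subsections, and absorbing the only term on the right-hand side that still contains $T_n(T)$ back into the left-hand side. I would proceed in three steps.

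First, I would use the starting inequality~\eqref{ito-rel4a}, which already reduces the control of $\EX T_n(t)$ to a constant multiple of $\EX\sup_{s\le t\wedge\tau_n} Z^{(1)}_n(s) + \EX \sup_{s\le t\wedge\tau_n} |Z^{(2)}_n(s)|$. The first term $\EX\sup Z^{(1)}_n(t\wedge\tau_n)$ tends to $0$ by the lemma containing \eqref{Z1-conv}, and hence can be placed inside $\gamma_n(N,M,m,T)$. So the work lies in estimating $\EX\sup_{t\in[0,T]} |Z^{(2)}_n(t\wedge\tau_n)|$.

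Second, I would combine the two splittings available for $Z^{(2)}_n$: inequality~\eqref{Z2-2} bounds $|Z^{(2)}_n(t\wedge\tau_n)|$ by $T_n(t,1) + |\widetilde{Z}^{(2)}_n(t)|$, and identity~\eqref{Z2-tilde} expands $\widetilde{Z}^{(2)}_n(t) = \sum_{i=2}^{6} T_n(t,i)$. The term $\EX\sup_{t\in[0,T]} T_n(t,1)$ is controlled by \eqref{Z2-t1}, which gives the bound $Cn^{3}2^{-n/4}\to 0$. For $i=4,5,6$ the estimates \eqref{T4}, \eqref{T5}, \eqref{T6} all yield expressions of the form $Cn^{3/2}2^{-n/2}$ that vanish as $n\to\infty$, and so contribute only to $\gamma_n(N,M,m,T)$.

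The one remaining delicate contribution is $T_n(t,3)$, whose bound in \eqref{T3} is not a pure vanishing term but has the form $\eta\,\EX T_n(T) + C(N,T,\eta)\, n^3 2^{-n}$ for any $\eta>0$. The key step is then to move this $\eta\, \EX T_n(T)$ to the left-hand side: choosing $\eta$ small enough (say $\eta = 1/(2C)$ where $C$ is the constant coming from~\eqref{ito-rel4a}), the factor of $\EX T_n(T)$ on the right becomes strictly less than $1$, so it can be absorbed. All the vanishing contributions above, together with the residual $C(N,T,\eta) n^{3} 2^{-n}$ from~\eqref{T3}, are lumped into $\gamma_n(N,M,m,T)$, which goes to $0$ as $n\to\infty$ for fixed $N,M,m,T$. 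The only surviving genuine term is $C\,\EX\sup_{t\in[0,T]}|T_n(t,2)|$, which yields the asserted inequality. The main obstacle is purely bookkeeping — verifying that the $\eta\,\EX T_n(T)$ from $T_n(t,3)$ is indeed the only non-vanishing remainder; the nontrivial analytic work has already been done in the preceding estimates, and $T_n(t,2)$ is deliberately left untouched here because it requires a separate, more careful treatment (using in an essential way the identity $D\ts_j\cI_n(s,s_n)$ combined with the compensating drift $\ro+\tfrac12\tro$) to be carried out in the subsequent subsections.
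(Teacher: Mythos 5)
Your proposal is correct and follows exactly the paper's own (very briefly stated) argument: the authors simply say "collecting the relations in \eqref{ito-rel4a}--\eqref{T6}, and choosing $\eta>0$ small enough in \eqref{T3}", which is precisely your bookkeeping of \eqref{ito-rel4a}, \eqref{Z1-conv}, \eqref{Z2-2}, \eqref{Z2-t1}, \eqref{Z2-tilde}, \eqref{T4}--\eqref{T6}, plus the absorption of the $\eta\,\EX T_n(T)$ term from \eqref{T3} into the left-hand side. You also correctly read \eqref{T3} as carrying an expectation on $T_n(T)$, consistent with its derivation.
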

\subsubsection{\bf Splitting of $T_n(t,2)$.}
 Let    $T_n(t,2)$ be defined   by \eqref{T2}; then we have the following
decomposition:
\begin{equation}\label{T2-si}
 T_n(t,2)=\sum_{1\leq i\leq 7} S_n(t,i),
\end{equation}
where
\begin{align*}
 & \!  S_n(t,1) =
   \sum_{j=1}^n\!\!\int_0^{t\wedge\tau_n}\!\!\!   \dot{\beta_j}^n(s)\Big( D \ts_j(u^n(s_n))\cI_n(s, s_n)
, [u^n(s)-u(s)]- [u^n(s_n)-u(s_n)]\Big) ds,
    \\
& S_n(t,2) =-
   \int_0^{t\wedge\tau_n}\! \Big( \big(\ro+\hf\tro\big)(u^n(s))-
   \big(\ro_n+\hf\tro_n\big)(u^n(s))\, ,\,
   u^n(s)-u(s)\Big) ds,
    \\
&  S_n(t,3) =-
   \int_0^{t\wedge\tau_n}\Big( \big(\ro_n+\hf\tro_n\big)(u^n(s))-
   \big(\ro_n+\hf\tro_n\big)(u^n(s_n)),
   u^n(s)-u(s)\Big) ds\, ,\,
      \\
& S_n(t,4) =-
   \int_0^{t\wedge\tau_n}\Big(
   \big(\ro_n+\hf\tro_n\big)(u^n(s_n)),
  [u^n(s)-u(s)]- [u^n(s_n)-u(s_n)]\Big) ds\, ,\,
\\
 &   S_n(t,5) =
   \sum_{1\leq j\leq n}\!\int_0^{t\wedge\tau_n}  \!\!  \dot{\beta_j}^n(s)
\Big( D \ts_j(u^n(s_n))
\Big[ \int_{s_n}^s\left[ \s(u^n(r))- \s(u^n(s_n))\right]dW(r)
\\
  &  {}\qquad  {}\qquad\qquad\qquad
  + \int_{s_n}^s\left[ \ts(u^n(r))- \ts(u^n(s_n))\right]\dot{\widetilde{W}}^n(r) dr
\Big]
\, ,\,  u^n(s_n)-u(s_n)\Big) ds,
 \end{align*}
\begin{align}\label{sn6-def}
 &   S_n(t,6) =
  \int_0^{t\wedge\tau_n}\!\!\Big(   \sum_{1\leq j\leq n} \dot{\beta_j}^n(s) D \ts_j(u^n(s_n))
\big[ \s(u^n(s_n)) \big( W(s)-W(s_n)\big)\big]   -\ro_n(u^n(s_n))\, ,
\nonumber
\\&  {}\qquad  {}\qquad\qquad\qquad\qquad  
u^n(s_n)-u(s_n)\Big) ds,
 \end{align}
\begin{align}\label{sn7-def}
 &   S_n(t,7) =
   \int_0^{t\wedge\tau_n}\!\!\Big(   \sum_{1\leq j\leq n}   \dot{\beta_j}^n(s)  D \ts_j(u^n(s_n))
\Big[ \ts(u^n(s_n))\Big( \int_{s_n}^s\dot{\widetilde{W}}^n(r)dr\Big) \Big]
    -\hf\tro_n(u^n(s_n))\, ,
\nonumber \\
  &  {}\qquad  {}\qquad\qquad\qquad\qquad
u^n(s_n)-u(s_n)\Big) ds.
\end{align}
The most difficult terms to deal with are $S_n(t,6)$ and $S_n(t,7)$. We start with the simpler ones
$S_n(t,i)$, $i=1, ...,5$.
\subsubsection{\bf Bound for  $S_n(t,1)$.}
Let $\cI_n(s, s_n)$ be defined in \eqref{T2-i}; using \eqref{Ds-bnd},  \eqref{bnd-n1} and \eqref{bnd-bW}
we obtain
\[
   |S_n(t,1)| \!\le \! C_1(2N+2) \, \alpha\,
 n^{3/2}\, 2^{n/2}\!\int_0^{t\wedge\tau_n}\!\!\!\left|\cI_n(s, s_n)\right|
 \left( |u^n(s)-u^n(s_n)| +|u(s)- u(s_n)|\right) ds.
\]
 Thus for $\cN_n= \int_0^{\tau_n}\!\! \left( |u^n(s)-u^n(s_n)|^2 +|u(s)- u(s_n)|^2\right) ds$,
Schwarz's inequality yields
\begin{equation}\label{s1-1}
\EX\Big(\sup_{t\in [0,T]} |S_n(t,1)|\Big)\le
C_N\,
 n^{3/2}\, 2^{n/2}\, \left[\EX\cN_n
 \right]^{1/2} \, \Big[\EX\int_0^{\tau_n}\left|\cI_n(s, s_n)\right|^2ds\Big]^{1/2}.
\end{equation}
Since
\[
\EX \cN_n\le
 \EX\int_0^{\tau_n} 1_{G_N^n(s)}\left( |u^n(s)-u^n(s_n)|^2 +|u(s)- u(s_n)|^2\right) ds
\]
with $G_N^n(s)$ defined by \eqref{g-N-set2}, using  
\eqref{eq-increm-1} with $\phi_n(s)=s_n$ and $\psi_n(s)=s$ we deduce:
\begin{equation}\label{s1-2}
\EX \cN_n\le C(N,M,T) \, n^{3/2} \, 2^{-{3n}/{4}}.
\end{equation}
Furthermore, the local property of the stochastic integral and \eqref{bnd-bW} yield
\begin{align*}
\EX &\int_0^{\tau_n}\left|\cI_n(s, s_n)\right|^2ds
\\ &
\le  C \int_0^T\left[\EX\Big|
\int_{s_n}^s\! 1_{G_N^n(r)}\s(u^n(r))dW(r)\Big|^2 +
 \EX\Big| \int_{s_n}^s\! 1_{G_N^n(r)}\ts(u^n(r))\dot{W}^n(r) dr\Big|^2\right]ds
 \\
 &\le  C \int_0^T\left[\EX
\int_{s_n}^s\! 1_{G_N^n(r)}\left|\s(u^n(r))\right|^2_{L_Q}dr +
\alpha^2\, n^2 \,  \EX\int_{s_n}^s\! 1_{G_N^n(r)}
\left| \ts(u^n(r))\right|^2_{L_Q} dr\right]ds.
\end{align*}
Thus by \eqref{s-bnd} and the definition  of the set $G_N^n(s)$ given in \eqref{g-N-set2},
we deduce:
\begin{equation}\label{s1-3}
\EX\int_0^{\tau_n}\left|\cI_n(s, s_n)\right|^2ds\le C(N,M,T) \, n^2\, 2^{-n}.
\end{equation}
Consequently the inequalities \eqref{s1-1} -- \eqref{s1-3} yield
\begin{equation}\label{s1-fin}
\EX\Big(\sup_{t\in [0,T]} |S_n(t,1)|\Big)\le
C(N,M,T) \, n^{13/4}\, 2^{-3n/8}.
\end{equation}
\subsubsection{\bf Bound for  $S_n(t,2)$.}
The inequality \eqref{bnd-n1} implies that
\[
\sup_{t\in [0,T]} |S_n(t,2)|\le
C(N)T
\sup_{|u|\le 2(N+1)}\left\{ |\varrho_n(u)-\ro(u)| +|\tro_n(u)-\tro(u)|\right\}.
\]
Therefore, the locally uniform convergence   \eqref{rn-3}
of $\rho_n$ to $\rho$ and $\tilde{\rho}_n$ to $\tilde \rho$ respectively  yields
\begin{equation}\label{t-s2}
\lim_{n\to\infty}\EX\Big(\sup_{t\in [0,T]} |S_n(t,2)|\Big)=0.
\end{equation}
\subsubsection{\bf Bound for  $S_n(t,3)$.}
The local Lipschitz property \eqref{rn-2}  and \eqref{bnd-n1} imply
\begin{align*}
 |S_n(t,3)| & \le 2\,   {\bar C}_{2N+2}    \sqrt{N+1}
   \int_0^{\tau_n}\left|u^n(s)-u^n(s_n)\right| ds
  \\ & \le C(N,T)\Big[ \int_0^{\tau_n}\!\!
   1_{G_N^n(s)}\left|u^n(s)-u^n(s_n)\right|^2 ds
   \Big]^{1/2},
\end{align*}
where  $G_N^n(s)$ is defined by \eqref{g-N-set2}. Thus
\eqref{eq-increm2} yields
\begin{equation}\label{t-s3}
\EX\Big(\sup_{t\in [0,T]} |S_n(t,3)|\Big)\le C(N,M,T)\, n^{3/4}\,  2^{-3n/8} .
\end{equation}
\subsubsection{\bf Bound for  $S_n(t,4)$.}
The local growth condition \eqref{rn-1}, relations \eqref{eq-increm-1} and
 \eqref{eq-increm2}, and also  Schwarz's inequality
imply
\begin{align}\label{t-s4}
\EX &\Big(\sup_{t\in [0,T]} |S_n(t,4)|\Big)
\le 2   {\bar  K} _{2N+2}    \EX \int_0^{\tau_n}\left(
     |u^n(s)-u^n(s_n)|+ |u(s)-u(s_n)|\right) ds
    \nonumber
     \\
    & \le
2    {\bar K} _{2N+2}    \, \sqrt{T}\,  \Big[ \EX \int_0^{\tau_n}\left( 1_{G_N^n(s)}
     |u^n(s)-u^n(s_n)|^2+ 1_{\widetilde{G}_N(s)}|u(s)-u(s_n)|^2\right) ds\Big]^{1/2}
\nonumber
     \\
      & \le C(N,M,T) \, n^{3/4}\,  2^{-3n/8}.
\end{align}
\subsubsection{\bf Bound for  $S_n(t,5)$.}
The local bound  \eqref{Ds-bnd} together with the inequalities \eqref{bnd-n1}  and  \eqref{bnd-bW} yield
\begin{eqnarray*}
  | S_n(t,5)| &\le & C_1(2N+2) \, \alpha\,  n^{3/2} 2^{n/2}
   \int_0^{\tau_n}\Big\{
\Big| \int_{s_n}^s\left[ \s(u^n(r))- \s(u^n(s_n))\right]dW(r)
\Big|
\\
  & &  +\Big|
  \int_{s_n}^s\left[ \ts(u^n(r))- \ts(u^n(s_n))\right]\dot{\widetilde W}^n(r) dr
\Big| \Big\}\, ds.
\end{eqnarray*}
Using Schwarz's inequality, \eqref{s-lip} and \eqref{bnd-bW}, we deduce
\begin{align*}
 \EX \Big(\! \sup_{t\in [0,T]} &  |S_n(t,5)|\Big)
 \le  C_N\, n^{3/2}\, 2^{n/2}\,
\Bigg\{ \! \int_0^{T} \!\! \EX
\Big| \int_{s_n\wedge\tau_n}^{s\wedge \tau_n}  \!\!\!
  1_{G_N^n(r)}\left[ \s(u^n(r))- \s(u^n(s_n))\right]dW(r)
\Big|^2 ds
\\
 & \quad \qquad  + \alpha^2\,  n^2\,  2^n \, L \, \EX \int_0^{\tau_n} \Big|
  \int_{s_n}^s\left| u^n(r)- u^n(s_n)\right| dr
\Big|^2 ds\Bigg\}^{1/2}
\\
 & \le  C_N\, n^{3/2}\,  2^{n/2}\, \sqrt{L} \,
\Bigg\{  \int_0^{T} ds\, \EX
 \int_{s_n\wedge\tau_n}^{s\wedge\tau_n}   1_{G_N^n(r)}\, \left|u^n(r)- u^n(s_n)\right|^2\,  dr
\\
  & \quad  \qquad   + \alpha^2\,  n^2\,  T \,  \EX \int_0^{\tau_n} ds
  \int_{s_n}^s\left| u^n(r)- u^n(s_n)\right|^2 dr
\Bigg\}^{1/2}
\\
 & \le  C_N n^{3/2} 2^{n/2}\sqrt{L(1+ \alpha^2 n^2 T)}
\left\{  \int_0^{T} ds\, \EX
 \int_{s_n\wedge \tau_n}^{s\wedge\tau_n}   1_{G_N^n(r)}\left|u^n(r)- u^n(s_n)\right|^2 dr
\right\}^{1/2} .
\end{align*}
Fubini's theorem and \eqref{eq-increm2} imply that
\begin{align}\label{t-s5}
 \EX \Big(\sup_{t\in [0,T]}& |S_n(t,5)|\Big) \le \sqrt{L}  C(N,T)\, n^{5/2}\, 2^{n/2}
\nonumber\\
&
\quad \times   \Big( \EX \int_0^{\tau_n} 1_{G_N^n(r)}\big[ \left|u^n(r)- u^n(r_n)\right|^2
+  \left|u^n(r)- u^n(\underline{r}_n)\right|^2 \big]\, 2T2^{-n} \, dr\Big)^{\frac{1}{2}}
\nonumber\\&
 \le C(N,M,T)\, n^{13/4}\,  2^{-3n/8}.
\end{align}
Proposition~\ref{pr:t-n2}  and the relations in \eqref{s1-fin}--\eqref{t-s5}
imply  the following assertion:
\begin{prop}\label{pr:t-s67}
Let the assumptions of Theorem \ref{th:WZ-appr} be satisfied and let
 $T_n(t)$ be  defined  by \eqref{Tn-def}; then we have:
\[
\EX T_n(T)\le \gamma^*_n(N,M,m,T) +C\EX \Big(
\sup_{t\in[0,T]} | S_n(t,6)|+ \sup_{t\in[0,T]} | S_n(t,7)|\Big) ,
\]
 where $\lim_{n\to\infty} \gamma^*_n(N,M,m,T)=0$,
$S_n(t,6)$ and $S_n(t,7)$ are defined  by \eqref{sn6-def} and \eqref{sn7-def}.
\end{prop}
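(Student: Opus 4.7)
The plan is to combine Proposition~\ref{pr:t-n2} with the splitting \eqref{T2-si} of $T_n(t,2)$ into the seven pieces $S_n(t,1), \dots, S_n(t,7)$, and observe that the estimates for $S_n(t,i)$ with $1 \le i \le 5$ have already been established in the preceding subsections with rates that vanish as $n \to \infty$. These five terms can therefore be absorbed into the new error term $\gamma^*_n(N,M,m,T)$, and only the contributions of $S_n(t,6)$ and $S_n(t,7)$ need to be carried forward.

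More concretely, Proposition~\ref{pr:t-n2} asserts
\[
\EX T_n(T) \le \gamma_n(N,M,m,T) + C\, \EX \Big( \sup_{t\in[0,T]} | T_n(t,2)|\Big),
\]
with $\gamma_n \to 0$. Applying the identity \eqref{T2-si} and the triangle inequality gives
\[
\EX \Big(\sup_{t\in[0,T]}|T_n(t,2)|\Big) \le \sum_{1\le i\le 7} \EX \Big(\sup_{t\in[0,T]}|S_n(t,i)|\Big).
\]
For $i=1$, the bound \eqref{s1-fin} yields a rate of order $n^{13/4} 2^{-3n/8}$; for $i=2$, the locally uniform convergence \eqref{rn-3} invoked in \eqref{t-s2} gives convergence to $0$; for $i=3$ and $i=4$, the estimates \eqref{t-s3} and \eqref{t-s4} provide rates of order $n^{3/4} 2^{-3n/8}$; finally, for $i=5$, the bound \eqref{t-s5} gives a rate of order $n^{13/4} 2^{-3n/8}$. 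Each of these rates tends to $0$ as $n\to\infty$.

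Define
\[
\gamma^*_n(N,M,m,T) := \gamma_n(N,M,m,T) + C \sum_{i=1}^{5} \EX \Big(\sup_{t\in[0,T]}|S_n(t,i)|\Big);
\]
then $\gamma^*_n \to 0$ by the discussion above. Substituting this into the estimate from Proposition~\ref{pr:t-n2} yields the claimed inequality. There is no substantive obstacle here: the proposition is essentially a bookkeeping step that isolates the two remaining ``genuinely stochastic'' terms $S_n(t,6)$ and $S_n(t,7)$, which involve the interplay between $\dot\beta_j^n$ and $W(s)-W(s_n)$ (respectively between $\dot\beta_j^n$ and $\dot{\widetilde W}^n$) and must be handled by exploiting the almost-orthogonality encoded in the Stratonovich correction terms $\rho_n$ and $\tfrac12 \tilde\rho_n$ in the following subsections.
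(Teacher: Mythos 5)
Your proposal is correct and follows exactly the paper's route: the paper likewise obtains Proposition~\ref{pr:t-s67} by combining Proposition~\ref{pr:t-n2} with the decomposition \eqref{T2-si} and the already-proved bounds \eqref{s1-fin}--\eqref{t-s5} for $S_n(t,i)$, $1\le i\le 5$, absorbing those vanishing contributions into $\gamma^*_n$. The bookkeeping details you supply (triangle inequality and the explicit definition of $\gamma^*_n$) are exactly what the paper leaves implicit.
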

The upper estimates of $S_n(t,6)$ and $S_n(t,7)$ are the key ingredients  of the proof; they justify
the drift correction term in the definition of $u^n$.

\subsubsection{\bf Bound for  $S_n(t,6)$.}
\begin{lemma}\label{S6}
Let the assumptions of Theorem \ref{th:WZ-appr} be satisfied
and $S_n(t,6)$ be given   by \eqref{sn6-def}. Then there exists a constant $C(N,T)$
such that
\begin{equation}\label{t-s6}
\EX\Big(\sup_{t\in [0,T]} |S_n(t,6)|\Big) \leq C(N,T)\,  n\, 2^{-\frac{n}{2}}.
\end{equation}
\end{lemma}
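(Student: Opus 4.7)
The crucial observation is that the drift correction $\ro_n(u^n(s_n))$ in \eqref{sn6-def} is chosen precisely so that the integrand has vanishing conditional expectation given $\cF_{s_n}$. Indeed, for $s\in[t_l,t_{l+1})$ with $l\geq 1$, $\dot{\beta}_j^n(s)=T^{-1}2^n\Delta_j^{l-1}$ with $\Delta_j^{l-1}:=\beta_j(t_l)-\beta_j(t_{l-1})$, while $W(s)-W(s_n)=\sum_k q_k^{1/2}[\beta_k(s)-\beta_k(t_{l-1})]e_k$. A direct calculation using the identity $\EX\big[\Delta_j^{l-1}\bigl(\beta_k(s)-\beta_k(t_{l-1})\bigr)\big|\cF_{t_{l-1}}\big]=\delta_{jk}T2^{-n}$ then yields
\[
\EX\Big[\sum_{j=1}^n\dot{\beta}_j^n(s)D\ts_j(u^n(s_n))[\s(u^n(s_n))(W(s)-W(s_n))]\,\Big|\,\cF_{s_n}\Big]=\ro_n(u^n(s_n)),
\]
and on $[0,t_1)$ the integrand vanishes identically since $\dot{\beta}_j^n\equiv 0$ and $u^n(0)=u(0)=\xi$. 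The plan is to exploit this mean-zero structure through a dyadic decomposition combined with martingale maximal inequalities.

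Partition $[0,t\wedge\tau_n]$ into intervals of the form $[t_l,t_{l+1})\cap[0,\tau_n]$, and let $X_l$ denote the integral of the integrand over the $l$-th such piece. Since $\dot{\beta}_j^n$ is constant on $[t_l,t_{l+1})$, writing $W(s)-W(s_n)=[W(t_l)-W(t_{l-1})]+[W(s)-W(t_l)]$ produces the decomposition $X_l=X_l^a+X_l^b$. The piece $X_l^a$ collects the contributions coming from the diagonal fluctuations $(\Delta_j^{l-1})^2-T2^{-n}$ and the off-diagonal products $\Delta_j^{l-1}\Delta_i^{l-1}$ for $i\neq j$, paired with the $\cF_{t_{l-1}}$-measurable vector $u^n(t_{l-1})-u(t_{l-1})$, while $X_l^b$ collects the terms involving $V_l:=\int_{t_l}^{t_{l+1}}[W(s)-W(t_l)]\,ds$. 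A direct check shows that $X_l^a$ is $\cF_{t_l}$-measurable with $\EX[X_l^a\mid\cF_{t_{l-1}}]=0$, so $M_K^a:=\sum_{l\leq K}X_l^a$ is a discrete martingale for the filtration $(\cF_{t_K})_K$; likewise $X_l^b$ is $\cF_{t_{l+1}}$-measurable with $\EX[X_l^b\mid\cF_{t_l}]=0$ and $M_K^b:=\sum_{l\leq K}X_l^b$ is a martingale for $(\cF_{t_{l+1}})_K$.

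Doob's $L^2$ maximal inequality applied to each martingale yields $\EX\bigl[\max_K|M_K^\ast|^2\bigr]\leq 4\sum_l\EX[(X_l^\ast)^2]$ for $\ast\in\{a,b\}$. The individual second moments I would bound by using the orthogonality of distinct Gaussian monomials $\Delta_j\Delta_i$, the variance identity $\mathrm{Var}((\Delta_j^{l-1})^2)=2(T2^{-n})^2$, and the bound $\EX|V_l|^2\leq\mathrm{tr}(Q)T^32^{-3n}/3$, combined with the operator estimate \eqref{Ds-bnd}, the $L_Q$-growth bound $|\s(u)|_{L_Q}^2\leq K_0+K_1|u|^2$ from \eqref{s-bnd}, the local bound $|\ro_n(u)|\leq\bar{K}_{2N+2}$ from \eqref{rn-1}, and the stopping-time estimate $|u^n(t_{l-1})-u(t_{l-1})|^2\leq\la\leq 1$ valid on $\{\tau_n\geq t_l\}$. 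Summing over the $\sim 2^n$ dyadic intervals yields $\sum_l\EX[(X_l^a)^2+(X_l^b)^2]\leq C(N,T)\,n\,2^{-n}$, and Doob therefore produces $\EX[\max_K|M_K^a+M_K^b|]\leq C(N,T)\sqrt{n}\,2^{-n/2}$.

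Finally, to upgrade $\max_K$ into $\sup_{t\in[0,T]}$ I would control the intra-interval oscillation $\sup_{t\in[t_K,t_{K+1}]}|S_n(t,6)-S_n(t_K,6)|\leq\int_{t_K\wedge\tau_n}^{t_{K+1}\wedge\tau_n}|g(s)|\,ds$ in $L^2$ through Cauchy--Schwarz, using the bound $\sup_j|\dot{\beta}_j^n(s)|\leq\alpha n^{1/2}2^{n/2}$ from \eqref{bnd-bW} together with the It\^o-isometry estimate $\EX|\s(u^n(s_n))(W(s)-W(s_n))|^2\leq 2T2^{-n}(K_0+K_1N^2)$, which contributes a further term of order $C(N,T)n\,2^{-n/2}$ and gives \eqref{t-s6}. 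The main technical obstacle is the filtration shift: $\dot{\beta}_j^n(s)$ lives one dyadic step ahead of $u^n(s_n)$, so the martingale structure requires splitting $X_l$ into two pieces measurable with respect to distinct shifted filtrations, and one must verify that the four-fold Gaussian orthogonalities in the $\Delta\Delta\Delta\Delta$ products produce enough cancellation to absorb the very large a priori factor $\alpha n^{1/2}2^{n/2}$ coming from $\dot{\beta}_j^n$.
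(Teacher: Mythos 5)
Your overall strategy is the same as the paper's: you identify that $\ro_n(u^n(s_n))$ is exactly the conditional mean of $\sum_j\dot{\beta}_j^n(s)D\ts_j(u^n(s_n))[\s(u^n(s_n))(W(s)-W(s_n))]$ given $\cF_{s_n}$, split the centred integrand into the diagonal fluctuations $2^nT^{-1}(\Delta_j^{l-1})^2-1$, the off-diagonal products $\Delta_j^{l-1}\Delta_i^{l-1}$, and the overshoot $W(s)-W(\underline{s}_n)$, and control each block by a discrete-martingale maximal inequality plus second-moment orthogonality. The paper does precisely this: its $V_n^{(3)}$, $V_n^{(2)}$, $V_n^{(1)}$ are your diagonal, off-diagonal and $V_l$-pieces, the only cosmetic difference being that the overshoot piece is treated there as a continuous-time stochastic integral via the Burkholder--Davis--Gundy inequality rather than as a discrete martingale. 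The orders of magnitude you announce for the second moments are consistent with the paper's.

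There is, however, one step that is false as written and needs a separate argument. You define $X_l$ as the integral of the integrand over $[t_l,t_{l+1})\cap[0,\tau_n]$ and then assert that $X_l^a$ is $\cF_{t_l}$-measurable with $\EX[X_l^a\mid\cF_{t_{l-1}}]=0$ (and similarly for $X_l^b$). Because the domain of integration is truncated at the stopping time $\tau_n$, neither claim holds: on the (random) dyadic interval containing $\tau_n$ the length of integration is determined only at time $\tau_n$, so $X_l^a$ is not $\cF_{t_l}$-measurable, and the conditional expectation of the truncated block given $\cF_{t_{l-1}}$ does not vanish. The paper repairs this by writing $1_{\{s\le\tau_n\}}=1_{\{s_n\le\tau_n\}}-1_{\{s_n\le\tau_n<s\}}$: the first indicator is $\cF_{s_n}$-measurable, so the corresponding blocks are genuine martingale differences, while the residual term (the paper's $S_n^{(4)}$, supported on the at most two dyadic intervals straddling $\tau_n$) is estimated separately by Cauchy--Schwarz, using $\sum_k 1_{\{t_k\le\tau_n\le t_{k+1}\}}\le 2$ together with the moment bounds $\EX|W(s)-W(s_n)|_0^4\le C2^{-2n}$ and $\EX|\dot{\beta}_j^n(s)|^4\le C2^{2n}$. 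This residual contributes $C(N,T)\,n\,2^{-n/2}$, i.e.\ the same order as your main terms, so it cannot be discarded; on the other hand it is of the same nature as the intra-interval oscillation you do bound at the end, so the fix slots naturally into your scheme once the omission is recognized.
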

\begin{proof}
For $t\in [0,T]$ set
\begin{align*} 
 &U^n_j(s) = \dot{\beta_j}^n(s)\,  D \ts_j(u^n(s_n))
\big( \s(u^n(s_n)) \left[ W(s)-W(s_n)\right] \big)\, ,
\nonumber \\
\nonumber 
&\Delta_n(s)=
\Big( \sum_{1\leq j\leq n} U_j^n(s) -\ro_n(u^n(s_n))\, ,\,
   u^n(s_n)-u(s_n)\!\Big).
\end{align*}
We also have an obvious decomposition
\[
\sum_{1\leq j\leq n} U_j^n(s) -\ro_n(u^n(s_n))= \sum_{1\leq i\leq 3} V_n^{(i)}(s),
\]
where  \eqref{W-n}  yields
\begin{align*}
& V_n^{(1)}(s)= \sum_{1\leq j\leq n} D \ts_j(u^n(s_n))
\s(u^n(s_n)) \left[ W(s)-W(\underline{s}_n)\right] \dot{\beta_j}^n(s),
\\
& V_n^{(2)}(s)=\sum_{1\leq j\leq n} \sum_{l\neq j} D \ts_j(u^n(s_n))
\s_l(u^n(s_n)) \left[\beta_l(\underline{s}_n)-\beta_l(s_n) \right]\ {2^n} T^{-1}
\left[\beta_j(\underline{s}_n)-\beta_j(s_n) \right],
\\
& V_n^{(3)}(s)= \sum_{1\leq j\leq n} D \ts_j(u^n(s_n))
\s_j(u^n(s_n)) \left[ {2^n} T^{-1}
\Big( \beta_j(\underline{s}_n)-\beta_j(s_n) \Big)^2 -1\right].
\end{align*}
The obvious identity
\begin{equation}\label{1-split}
1_{\{s\le\tau_n\}}=1_{\{s_n\le\tau_n\}}- 1_{\{s_n\le\tau_n< s\}}
\end{equation}
yields the  following decomposition, where $G_N^n(t)$ is defined by \eqref{g-N-set2}:
\begin{equation}\label{t-s6-2}
    S_n(t,6) =
   \int_0^{t\wedge\tau_n} \Delta_n(s) \, 1_{G_N^n(s_n)}\, ds
    =\sum_{1\leq i\leq 3} S^{(i)}_n(t)-S^{(4)}_n(t),
\end{equation}  
with
\begin{eqnarray*}
S^{(i)}_n(t)&=& \int_0^t  1_{\{s_n\le\tau_n\}}1_{G^n_N(s_n)}
\big( V_n^{(i)}(s)\, ,\, u^n(s_n)-u(s_n)\big)\,  ds, \qquad i=1,2,3, \\
S^{(4)}_n(t)&=& \int_0^t  1_{\{s_n\le\tau_n<s\}} 1_{G^n_N(s_n)}
\Delta_n(s) ds.
\end{eqnarray*}
We note that $S^{(i)}_n(t)=0$ for  every $i=1,2,3$  and $t\le t_2$.
\smallskip

\noindent {\it Bound  for  $S^{(4)}_n$.}\quad  Set $t_{-1}=t_0=0$; using twice Schwarz's inequality, we deduce
\begin{align*}
\EX \Big(\sup_{t\in [0,T]} &|S^{(4)}_n(t)|\Big) \le
\sum_{0\leq k<2^n}
 \EX  \int_{t_k}^{t_{k+1}}
 1_{\{t_{k-1}\le\tau_n\le t_{k+1}\}} 1_{G^n_N(s_n)}\,\left|\Delta_n(s)\right| ds
\\
&  \le \Big\{2\, \sum_{0\leq k<2^n} \EX 1_{\{t_k\le\tau_n\le t_{k+1}\}}  \Big\}^{1/2}
\Big\{\sum_{0\leq k<2^n}\EX  \Big(\int_{t_k}^{t_{k+1}}
  1_{G^n_N(s_n)}\left|\Delta_n(s)\right| ds\Big)^2   \Big\}^{1/2}
  \\
&  \le
\sqrt{2}\, \Big\{T 2^{-n} 
\EX \int_0^T 
  1_{G^n_N(s_n)} \left|\Delta_n(s)\right|^2 ds   \Big\}^{1/2}.
\end{align*}
 Schwarz's inequality,  \eqref{s-bnd}, \eqref{Ds-bnd}, \eqref{rn-1} and
the definition \eqref{g-N-set2} of the set $G^n_N(s_n)$  yield
\begin{eqnarray*}
 1_{G^n_N(s_n)} \left|\Delta_n(s)\right|^2& \le &  { C}(N)   \Big(1+\sum_{1\leq j\leq n}
 \left| W(s)-W(s_n)\right|_0 |\dot{\beta_j}^n(s)| \Big)^2 \\
 &\le &   {C}(N)   \,
  \Big(1+ n  \left| W(s)-W(s_n)\right|_0^2 \sum_{1\leq j\leq n}|\dot{\beta_j}^n(s)|^2 \Big).
 \end{eqnarray*}
Therefore, Schwarz's inequality implies that for some constants $C(N,T)$, $c_1,c_2$, one has:
\begin{align}\label{t-s6-s2}
\EX & \Big(\sup_{t\in [0,T]} |S^{(4)}_n(t)|\Big)
 \nonumber \\
 & \le C(N,T) \, 2^{-n/2}\,
\Big\{1+   n\sum_{1\leq j\leq n}  \int_{0}^{T} \Big[\EX
 \left| W(s)-W(s_n)\right|_0^4\Big]^{1/2}
 \Big[\EX  |\dot{\beta_j}^n(s)|^4 \Big]^{1/2} ds   \Big\}^{1/2}
 \nonumber  \\
 & \le C(N,T) 2^{-n/2}
\Big\{1+   n^2
\Big[c_1  \frac{T^2}{ 2^{2n}} \Big]^{1/2}
 \Big[ T^{-4} 2^{4n}c_2 \frac{ T^2}{ 2^{2n}} \Big]^{1/2} 
 \Big\}^{1/2}
 \le C(N,T) \, n\,  2^{-\frac{n}{2}}.
\end{align}


\noindent 
{\it Bound for $S^{(1)}_n$.}\quad
Using duality and Fubini's theorem,  we can write
\begin{align*}
S^{(1)}_n(t) =& \sum_{1\leq j\leq n} \int_{0}^t  1_{\{s_n\le\tau_n\}}1_{G^n_N(s_n)}  \, \dot{\beta_j}^n(s)\\
& \quad \times
\int_{\underline{s}_n}^s\big(
\left[ D\ts_j(u^n(s_n))
\s(u^n(s_n))\right]^* ( u^n(s_n)-u(s_n)) \,,\, d W(r)\big)ds
\\
=& \sum_{1\leq j\leq n} \int_{0}^t   \Big( \int_r^{\bar{r}_n}
 1_{\{s_n\le\tau_n\}}1_{G^n_N(s_n)}  \,  \dot{\beta_j}^n(s)  \\
 & \quad \times
\left[ D\ts_j(u^n(s_n))
\s(u^n(s_n))\right]^* ( u^n(s_n)-u(s_n))\, ds\,  ,\,  d W(r)\Big).
\end{align*}
Since $\dot{\beta}_j^n(s)$ is ${\mathcal F}_{\underline{s}_n} = {\mathcal F}_{\underline{r}_n} $
adapted for $r\leq s\leq \bar{r}_n$,  the process $S^{(1)}_n$ is a  martingale. Therefore, the
 Burkholder-Davies-Gundy and Schwarz inequalities, \eqref{s-bnd}, \eqref{Ds-bnd} and \eqref{g-N-set2}
 imply
\begin{align} \label{s6-s11}
\EX & \Big( \sup_{t\in [0,T]} |S^{(1)}_n(t)|\Big)
\le c_0 \, \EX\Big\{\int_{0}^T \Big|  \sum_{1\leq j\leq n}  \int_r^{\bar{r}_n}
 1_{\{s_n\le\tau_n\}}1_{G^n_N(s_n)}\,  \dot{\beta_j}^n(s)
\nonumber  \\
&  \qquad \qquad \times
\left[ D\ts_j(u^n(s_n))
\s(u^n(s_n))\right]^* ( u^n(s_n)-u(s_n)) ds\Big|^2 dr\Big\}^{1/2}
\nonumber \\
\leq &
 \frac{C(N,T) \sqrt{n}}{2^{n/2}}
\EX\Big\{\int_{0}^T \!\!dr \!\! \sum_{1\leq j\leq n}  \int_r^{\bar{r}_n} \!\!
 1_{\{s_n\le\tau_n\}}1_{G^n_N(s_n)}
\left| D\ts_j(u^n(s_n))
\s(u^n(s_n))\right|^2 |\dot{\beta_j}^n(s)|^2 ds\Big\}^{1/2}
\nonumber \\
\le & 
C(N,T)\,  \sqrt{n} 2^{-n/2}\Big\{
\int_{0}^T \!\!dr \sum_{1\leq j\leq n} \EX \int_r^{\bar{r}_n}
|\dot{\beta_j}^n(s)|^2 ds\Big\}^{1/2} \leq
{C(N,T)}\, n\,   2^{-n/2}.
\end{align}

\noindent{\it Bound for $S^{(2)}_n$. }\quad
For $j=1, \cdots,n$, $l\neq j$, $i=1, \cdots, 2^n-1$, set
\[ \Phi_{j,l}(i)=\Big( D\ts_j(u^n(t_i)) \s_l(u^n(t_i))\, ,\, u^n(t_i)) - u(t_i)\Big)\, 1_{\{t_i\leq \tau_n\}}\,
1_{G_N^n(t_i)},\]
and for $k=3, \cdots, 2^n$, let
\[ M_k=\sum_{2\leq i\leq k} \sum_{1\leq j\leq n}\sum_{l\neq j} \Phi_{j,l}(i-1) \,
\big( \beta_l(t_i)-\beta_l(t_{i-1} )\big)   \, \big(
\beta_j(t_i)-\beta_j(t_{i-1} )\big).\]
 Then the
random variable  $   \Phi_{j,l}(i-1)$ is ${\mathcal F}_{t_{i-1}}$
measurable, and since for $l\neq j$ the sigma-field ${\mathcal
F}_{t_{i-1}}$ and the random variables
 $\beta_j(t_i)-\beta_j(t_{i-1}) $  and $\beta_l(t_i)-\beta_l(t_{i-1})
$ are independent, the process $(M_k,{\mathcal F}_{t_k}, 2\leq k
<2^n)$ is a discrete martingale. Furthermore, for the cases (a) $i<i'$ and $l'\neq
j'$, (b) $i'<i$ and $l\neq j$ or (c) $i=i'$ and $(\min(j,l),\max(j,l))\neq
(\min(j',l'),\max(j',l'))$,
 one has
\begin{align*} \EX \big[ \Phi_{j,l}(i-1)\, & \Phi_{j',l'}(i'-1)\, \big( \beta_l(t_i)-\beta_l(t_{i-1} )
\big)   \, \big( \beta_j(t_i)-\beta_j(t_{i-1} )\big)\\
& \times  \big( \beta_{l'}(t_{i'})-\beta_{l'}(t_{i'-1} )\big)   \,
\big( \beta_{j'}(t_{i'})-\beta_{j'}(t_{i'-1} )\big)\big]=0.
\end{align*}
Therefore, Schwarz's  and Doob's inequalities
 yield
\begin{align*}
\EX \Big(& \max_{2\leq k<2^n} |M_k|\Big)^2 \leq \EX\Big(  \max_{2\leq k<2^n} M_k^2\Big)
\leq 4 \EX \big( M_{2^n-1}^2\big) \\
& \leq 12 \sum_{2\leq i<2^n} \sum_{1\leq j\leq n} \sum_{l\neq j} \EX \big( \Phi_{j,l}(i-1)^2\big)\,
\EX \big( \big| \beta_l(t_i)-\beta_l(t_{i-1} )\big|^2\big)  \,
\EX \big( \big| \beta_j(t_i)-\beta_j(t_{i-1} )\big|^2\big).
\end{align*}
Furthermore, using \eqref{Ds-bnd}, \eqref{s-bnd} and \eqref{g-N-set2}  we deduce that for every $i,j,l$
\[ \EX\big( \Phi_{j,l}(i-1)^2\big) \leq q_l\, C_1(N)^2\, (K_0+K_1N^2)\, (2N)^2,\]
which implies
\begin{equation} \label{majoM1}
\EX \Big( \max_{2\leq k<2^n} |M_k|\Big) \leq C(N,T)\, n \, 2^{-{n}/{2}}.
\end{equation}
A similar easier computation shows that

\begin{align}\label{majoM2}
\EX & \Big( \max_{2\leq k<2^n} \sup_{t_k\leq t\leq t_{k+1}}\! \Big|\!
\sum_{1\leq j\leq n}\sum_{l\neq j}
 \Phi_{j,l}(k-1)\frac{2^n(t-t_k)}{T}
 \big( \beta_l(t_k)-\beta_l(t_{k-1} ) \big)   \big( \beta_j(t_k)-\beta_j(t_{k-1} )\Big|\Big)
\nonumber \\
& \leq \EX  \Big( \max_{2\leq k<2^n} \! \Big|\!
\sum_{1\leq j\leq n}\sum_{l\neq j}
 \Phi_{j,l}(k-1)
 \big( \beta_l(t_k)-\beta_l(t_{k-1} ) \big)   \big( \beta_j(t_k)-\beta_j(t_{k-1} )\Big|\Big)
\nonumber \\
& \leq \Big\{\EX  \Big( \max_{2\leq k<2^n} \! \Big|\!
\sum_{1\leq j\leq n}\sum_{l\neq j}
 \Phi_{j,l}(k-1)
 \big( \beta_l(t_k)-\beta_l(t_{k-1} ) \big)
   \big( \beta_j(t_k)-\beta_j(t_{k-1} )\Big|^2\Big)\Big\}^{1/2}
\nonumber \\
& \leq \Big\{  \sum_{2\leq k<2^n}\EX  \Big|\!
\sum_{1\leq j\leq n}\sum_{l\neq j}
 \Phi_{j,l}(k-1)
 \big( \beta_l(t_k)-\beta_l(t_{k-1} ) \big)
   \big( \beta_j(t_k)-\beta_j(t_{k-1} )\Big|^2\Big)\Big\}^{1/2}
\nonumber \\
& \leq \!\Big\{ \sum_{2\leq k<2^n} \!  \sum_{1\leq j\leq n} \sum_{l\neq j}
\EX \big( \Phi_{j,l}(k-1)^2\big)
\EX \big( \big| \beta_l(t_k)-\beta_l(t_{k-1} )\big|^2\big)
\EX \big( \big| \beta_j(t_k)-\beta_j(t_{k-1} )\big|^2\big)
\Big\}^{\frac{1}{2}}
\nonumber \\
&\leq  C(N,T)\, n \, 2^{-n/2}.
\end{align}
  Furthermore,
\begin{eqnarray*}
\lefteqn{
\EX \left(\sup_{t\in [0,T]} |S^{(2)}_n(t)|\right)
\le
\EX \left(\sup_{k}\sup_{t\in [t_k,t_{k+1}]} |S^{(2)}_n(t)|\right)
}  \\ &\le &
\EX \sup_{k\ge 3}\Big|  \sum_{2\leq i<k }  \int_{t_i}^{t_{i+1}}
1_{\{s_n\le\tau_n\}}1_{G^n_N(s_n)}
(V_n^{(2)} (s), u^n(s_n)-u(s_n)) ds\Big|
\\ &&
+ \EX \sup_{k\ge 2}\Big[ \sup_{t\in [t_k,t_{k+1}]}\Big|  \int_{t_k}^{t}
1_{\{s_n\le\tau_n\}}1_{G^n_N(s_n)}
(V_n^{(2)} (s), u^n(s_n)-u(s_n)) ds\Big|\Big].
\end{eqnarray*}
This inequality,  \eqref{majoM1} and \eqref{majoM2} immediately yield
\begin{equation}\label{s6-s12}
\EX\Big(\sup_{t\in [0,T]} |S_n^{(2)}| \Big) \leq  C(N,T)  \, n \, 2^{-\frac{n}{2}}.
\end{equation}

\noindent{\it Bound of $S^{(3)}_n$. } \quad
The argument is similar to the previous  one, based on a different discrete martingale.
For $i=1, \cdots, 2^n-1$, $j=1, \cdots,n$, set
\[ \Phi_j(i)= T 2^{-n}\, \Big( D\ts_j(u^n(t_i)) \s_j(u^n(t_i)) \, ,\, u^n(t_i)-u(t_i)\big)
\, 1_{\{t_i \leq \tau_n\}}\,
1_{G_N^n(t_i)}.\]
Then $\Phi_j(i-1)$ is ${\mathcal F}_{t_{i-1}}$-measurable and independent of the centered random variable
$Y_{ij}=2^n\, T^{-1} \big| \beta_j(t_i) - \beta_j(t_{i-1})\big|^2 -1$. Furthermore,  for $(i,j)\neq (i',j')$ one has
\[ \EX\big( \Phi_j(i-1)\, Y_{ij}\, \Phi_{j'}(i'-1)\, Y_{i'j'}\big)=0. \]
Using \eqref{Ds-bnd}, \eqref{s-bnd} and \eqref{g-N-set2}, we deduce   that for all $i,j$,
$\EX\big(|\Phi_j(i-1)|^2\big) \leq C_{N,T}\, 2^{-2n}$.
For $k=2, \cdots, 2^n$, set
\[N_k=\sum_{2\leq i\leq k} \sum_{1\leq j\leq n} \Phi_j(i-1)\, Y_{ij}.\]
The process $(N_k, {\mathcal F}_{t_k})$ is a discrete martingale;
 thus Schwarz's and Doob's  inequality  yield
\begin{align}\label{majoN1}
\EX  \Big( \max_{2\leq k\leq 2^n}   |N_k|\Big)&  \leq 2\big\{ \EX \big( |N_{2^n}|^2\big) \big\}^{\frac{1}{2}}
\nonumber \\
& \leq 2  \Big\{ \sum_{2\leq i\leq 2^n} \sum_{1\leq j\leq n} \EX\big( \Phi_j(i-1)^2\big)
\EX \big( |Y_{ij}|^2\big) \Big\}^{\frac{1}{2}}
\leq C_{T,N}\, n^{\frac{1}{2}}\, 2^{-\frac{n}{2}}.
\end{align}
Finally, a similar argument shows that
\begin{align}\label{majoN2}
 \EX &\Big(  \max_{1\leq k < 2^n}\sup_{t_k\leq t\leq t_{k+1}}\Big| 2^n T^{-1} (t-t_k)\,
 \sum_{1\leq j\leq n}
\Phi_j(k-1) Y_{kj}\Big|\Big) \nonumber \\
&\leq \Big( \EX  \sum_{1\leq k<  2^n}\Big|   \sum_{1\leq j\leq n}
\Phi_j(k-1)  Y_{kj}    \Big|^2\Big)^{\frac{1}{2}}\nonumber  \\
& \leq  \Big(  \sum_{1\leq k<  2^n} \!\! \EX \sum_{1\leq j\leq n} \big|
\Phi_j(k-1) Y_{kj}\big|^2\Big)^{\frac{1}{2}}
 \leq
 C(N,T)  n^{\frac{1}{2}} 2^{-\frac{n}{2}}.
\end{align}
The inequalities \eqref{majoN1} and \eqref{majoN2} imply that
\begin{equation}\label{s6-s13}
\EX\Big(\sup_{t\in [0,T]} |S_n^{(3)}| \Big) \leq  C(N,T) \, n^{\frac{1}{2}} \, 2^{-\frac{n}{2}}.
\end{equation}
Using \eqref{t-s6-2} and
collecting the upper estimates in \eqref{t-s6-2},
\eqref{s6-s11}, \eqref{s6-s12}
and \eqref{s6-s13}, we conclude  the proof of Lemma \ref{S6}.
\end{proof}

\subsubsection{\bf Bound for    $S_n(t,7)$.}
\begin{lemma}\label{S7}
Let the assumptions of Theorem \ref{th:WZ-appr} be satisfied and $S_n(t,7)$ be defined by
\eqref{sn7-def}. There exists a constant $C(N,T)$
such that
\begin{equation}\label{t-s7}
\EX\Big(\sup_{t\in [0,T]} |S_n(t,7)|\Big) \leq C(N,T)\,  n^2\, 2^{-\frac{n}{2}}.
\end{equation}
\end{lemma}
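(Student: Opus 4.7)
The plan is to follow the decomposition of $S_n(t,6)$ used in the proof of Lemma~\ref{S6}, with the compensator $\hf \tro_n$ playing the role of $\ro_n$. The factor $\hf$ arises because $\widetilde{W}^n$ is piecewise linear rather than Brownian, so that the diagonal $(dt)^2$-contribution to the Stratonovich correction picks up an extra $\hf$ from integrating the ramp $(s-t_k)$ over one time step. First I would expand $\ts(u^n(s_n))\int_{s_n}^s \dot{\widetilde W}^n(r)\,dr = \sum_{l=1}^n \ts_l(u^n(s_n))\int_{s_n}^s \dot{\tilde\beta}_l^n(r)\,dr$ and compute, for $s\in[t_k,t_{k+1})$ with $k\geq 2$,
\[
\int_{s_n}^s \dot{\tilde\beta}_l^n(r)\,dr = \big[\beta_l(t_{k-1})-\beta_l(t_{k-2}\vee 0)\big]+T^{-1}2^n(s-t_k)\big[\beta_l(t_k)-\beta_l(t_{k-1})\big].
\]
Using the identity \eqref{1-split} and the localizer $G_N^n(s_n)$, I would split $S_n(t,7) = \tilde S^{(1)}_n+\tilde S^{(2)}_n+\tilde S^{(3)}_n - \tilde S^{(4)}_n$ according to this decomposition, subtracting the compensator $\hf\tro_n$ only from the diagonal piece.

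The term $\tilde S^{(4)}_n$ is the boundary contribution from $1_{\{s_n\le\tau_n<s\}}$, handled as in the estimate \eqref{t-s6-s2}: there is at most one interval $[t_k,t_{k+1})$ on which $\tau_n$ can lie, and on $\Om_n(t)$ one has the deterministic bound $\big|\int_{s_n}^s\dot{\widetilde W}^n(r)\,dr\big|\le T2^{-n}\cdot\alpha n\,2^{n/2}$, together with $|\dot\beta_j^n|\le \alpha n^{1/2}2^{n/2}$ and $\sup_n|\tro_n|\le \bar K_{N}$ on the event $\{|u^n|\le N\}$. The term $\tilde S^{(1)}_n$ collects all products $\dot\beta_j^n(s)\,[\beta_l(t_{k-1})-\beta_l(t_{k-2}\vee 0)]$ (for any $j,l\le n$); on each interval $[t_k,t_{k+1})$ the factor $[\beta_l(t_{k-1})-\beta_l(t_{k-2}\vee 0)]$ is $\cF_{t_{k-1}}$-measurable and $\dot\beta_j^n(s)$ is a Brownian increment over $[t_{k-1},t_k]$, so writing this piece as a stochastic integral analogous to $S^{(1)}_n$ one controls it by the Burkholder--Davis--Gundy inequality, \eqref{Ds-bnd} and \eqref{s-bnd}.

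The term $\tilde S^{(2)}_n$ collects current-interval off-diagonal products with $l\neq j$, which on $[t_k,t_{k+1})$ form a discrete $(\cF_{t_k})$-martingale in $k$ (up to a boundary piece on the current interval), estimated by Doob's inequality exactly as in \eqref{majoM1}--\eqref{majoM2}. The diagonal term $\tilde S^{(3)}_n$ (i.e.\ $j=l$) involves
\[
T^{-2}2^{2n}(s-t_k)\big[\beta_j(t_k)-\beta_j(t_{k-1})\big]^2
\]
whose $\cF_{t_{k-1}}$-conditional mean, integrated against $ds$ over $[t_k,t_{k+1})$, equals exactly $\tfrac{T}{2\cdot 2^n}$; summed over $j\leq n$ this yields precisely the integrated compensator $\int_{t_k}^{t_{k+1}}\hf\tro_n(u^n(s_n))\,ds$. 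Subtracting it produces a centered discrete martingale of the same $S^{(3)}_n$-type, controlled via \eqref{majoN1}--\eqref{majoN2}.

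The main obstacle, and the reason for the extra factor $n$ compared to \eqref{t-s6}, is that on $\Om_n(t)$ the increment $\int_{s_n}^s\dot{\widetilde W}^n(r)\,dr$ has size of order $n\cdot 2^{-n/2}$, i.e.\ larger by a factor $n$ than $W(s)-W(s_n)$ in the estimate of $S_n(t,6)$. Tracking this additional factor through the four martingale/Doob estimates above yields the announced bound $C(N,T)\,n^2\,2^{-n/2}$ in \eqref{t-s7}.
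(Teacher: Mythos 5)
Your proposal follows essentially the same route as the paper's proof: the same expansion of $\int_{s_n}^s\dot{\widetilde{W}}^n(r)\,dr$ into the previous-interval increment plus the current ramp, the same four-way split (previous-interval piece handled as a martingale via Burkholder--Davis--Gundy, off-diagonal and diagonal discrete martingales with the compensator $\hf\tro_n$ exactly cancelled by the $ds$-average of the diagonal ramp, and the boundary piece where $\tau_n$ straddles a dyadic interval), and the same identification of the source of the extra factor $n$ relative to \eqref{t-s6}. One small caveat: on the boundary piece $\{s_n\le\tau_n<s\}$ the integration variable exceeds $\tau_n$, so the deterministic bounds on $\dot{\widetilde{W}}^n(r)$ coming from $\tau_n^{(3)}$ (or from $\Om_n(s_n)\subset G_N^n(s_n)$) are not available for $r\in(\tau_n,s]$; the paper instead combines Schwarz's inequality with the moment bounds $\EX|\dot{\widetilde{W}}^n(s)|_0^4\le C n^4 2^{2n}$ and $\EX|\dot{\beta}_j^n(s)|^4\le C 2^{2n}$, which is the routine fix and produces the dominant $n^2\,2^{-n/2}$ contribution.
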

\begin{proof}
For $s\in [0,T]$, $j=1, \cdots, n$, set
\begin{align*}
&\widetilde{U}^n_j(s) = D \ts_j(u^n(s_n)) \Big[
\ts(u^n(s_n)) \Big(\int_{s_n}^s
 \dot{\widetilde{W}}^n(r) dr\Big) \Big]\,  \dot{\beta_j}^n(s),\\
&\widetilde{\Delta}_n(s)=
\Big(  \sum_{1\leq j\leq n}\widetilde{U}_j^n(s) -\hf\tro_n(u^n(s_n))\, , \,
   u^n(s_n)-u(s_n)\!\Big).
\end{align*}
We obviously have that
\[
\sum_{1\leq j\leq n}\widetilde{U}_j^n(s) -\hf\tro_n(u^n(s_n))=
\sum_{1\leq i\leq 3}  \widetilde{V}_n^{(i)}(s),
\]
where
\begin{align*}
&  \widetilde{V}_n^{(1)}(s)= \sum_{1\leq j\leq n} D \ts_j(u^n(s_n))
\ts(u^n(s_n))
\big[ W_n(s_n) - W_n((s_n-T2^{-n})\vee 0)\big]\,  \dot{\beta_j}^n(s),
\\
&  \widetilde{V}_n^{(2)}(s)=\sum_{1\leq j\leq n}  \sum_{l\neq j} D \ts_j(u^n(s_n))
\ts_l(u^n(s_n))
 (s-\underline{s}_n) \left[\beta_l(\underline{s}_n)-\beta_l(s_n) \right]\frac{2^{2n}}{T^2}
\left[\beta_j(\underline{s}_n)-\beta_j(s_n) \right],
\\
&  \widetilde{V}_n^{(3)}(s)= \sum_{1\leq j\leq n} \ts_j(u^n(s_n))
\ts_j(u^n(s_n)) \left[ \frac{2^{2n}}{T^2}(s-\underline{s}_n)
\left[\beta_j(\underline{s}_n)-\beta_j(s_n) \right]^2 -\hf\right].
\end{align*}
Using  \eqref{1-split}   we deduce
 the following decomposition of $S_n(t,7)$:
\begin{equation}\label{decom-s7}
    S_n(t,7)  =    \int_0^{t\wedge\tau_n}
   \tilde{\Delta}_n(s) ds
   = \sum_{1\leq i\leq 3} \tilde S^{(i)}_n(t)-\tilde S^{(4)}_n(t),
\end{equation}
where
\begin{eqnarray*} 
\tilde S^{(i)}_n(t)&= &\int_{t_2}^t  1_{\{s_n\le\tau_n\}}1_{G^n_N(s_n)}
\big( \widetilde{V}_n^{(i)} (s), u^n(s_n)-u(s_n)\big)  ds,   \quad i=1,2,3, \\
\tilde S^{(4)}_n(t) &= & \int_0^t  1_{\{s_n\le\tau_n<s\}} 1_{G^n_N(s_n)}
\widetilde{\Delta}_n(s) ds.
\end{eqnarray*}
We note that  $\tilde S^{(i)}_n(t)=0$ for $i=1,2,3$ and $t\le t_2$.

\noindent {\it Bound  for  $\tilde S^{(4)}_n$. }\quad
The proof is similar to that of the upper estimate of $S^{(4)}_n$.
Schwarz's inequality implies
\begin{eqnarray*}
\EX \Big(\sup_{t\in [0,T]} |\tilde S^{(4)}_n(t)|\Big)
 \le 
\Big\{ 2 T 2^{-n}\sum_{0\leq k<2^{n-1}} \EX \int_{t_k}^{t_{k+1}}
  1_{G^n_N(t_k)} \left|\widetilde{\Delta}_n(s)\right|^2 ds   \Big\}^{1/2}.
\end{eqnarray*}
The inequalities  \eqref{s-bnd}, \eqref{Ds-bnd}, \eqref{rn-1}, the definition \eqref{g-N-set2}
of the set $G^n_N(s)$ and Schwarz's  inequality yield for $ t_k \le s <t_{k+2}$:
\begin{eqnarray*}
 1_{G^n_N(t_k)}  \left|\widetilde{\Delta}_n(s)\right|^2& \le
&  {C}(N)   \Big(1+ n \Big| \int_{s_n}^s
 \dot{\widetilde{W}}^n(r)dr\Big|^2 \sum_{1\leq j\leq n}|\dot{\beta_j}^n(s)|^2 \Big).
 \end{eqnarray*}
Therefore,  Fubini's theorem and Schwarz's inequality imply
\begin{align*}
\EX & \Big(\sup_{t\in [0,T]} |\tilde S^{(4)}_n(t)|\Big)
\\
 & \le C_{N,T} 2^{-n/2}
\Big\{T+   n\sum_{1\leq j\leq n}\sum_{0\leq k<2^n} \EX \int_{t_{k-1}}^{t_{k+1}}
 \left| \int_{s_n}^s
 \dot{\widetilde{W}}^n(r)dr\right|_0^2 |\dot{\beta_j}^n(s)|^2 ds   \Big\}^{1/2}
 \nonumber \\
 & \le C_{N,T} 2^{-n/2}
\Big\{1+   {n} 2^{-n}\sum_{1\leq j\leq n}\sum_{0\leq k<2^n}\EX \Big[
 \int_{t_{k-1}\vee 0}^{t_{k+1}} \Big|\dot{\widetilde{W}}^n(r)\Big|_0^2dr
  \int_{t_k}^{t_{k+1}} |\dot{\beta_j}^n(s)|^2 ds\Big] \Big\}^{1/2}
 \nonumber \\
 & \le C_{N,T} 2^{-\frac{n}2}
\Big\{1+   n 2^{-2n}\sum_{1\leq j\leq n}\sum_{0\leq k<2^n} \Big[
 \int_{t_{k-1}\vee 0}^{t_{k+1}}\EX  \big|\dot{\widetilde{W}}^n(r)\big|_0^4dr\Big]^{\hf}
 \Big[ \int_{t_k}^{t_{k+1}}\EX |\dot{\beta_j}^n(s)|^4 ds\Big]^{\hf} \Big\}^{\hf}.
 \nonumber
\end{align*}
Since for every $s\in [0,T] $ we have
$\EX \big| \dot{\widetilde{W}}^n(s)\big|_0^4 \leq C(T)\, n^4\, 2^{2n}$ and
$\EX |\dot{\beta}_j^n(s)|^4 \leq C(T) 2^{2n}$, we deduce the existence of some constant $C(N,T)$ such that
\begin{equation}\label{t-s7-s4}
\EX \Big(\sup_{t\in [0,T]} |\tilde S^{(4)}_n(t)|\Big)
 \le C(N,T)\,  n^2 \,  2^{-n/2}.
 \end{equation}

\noindent {\it Bound for $\tilde S^{(1)}_n$. }\quad
For $j=1, ..., n$ let
\[ \varphi_j(s)= 1_{\{ \underline{s}_n \leq \tau_n\}} 1_{G_N^n(\underline{s}_n)}
 \Big( D\ts_j(u^n(\underline{s}_n))
\big[ \tilde{\s}(u^n(\underline{s}_n)) \big( W_n(\underline{s}_n) - W_n(s_n)\big)\big] \, ,\, u^n(\underline{s}_n)
- u(\underline{s}_n)\Big).\]
Then $ \varphi_j(s)$ is ${\mathcal F}_{\underline{s}_n}$ measurable and for $t\geq t_2$,
\[ \tilde S^{(1)}_n(t) = \sum_{1\leq j\leq n} \int_{t_1}^{t_n} \varphi_j(s) d\beta_j(s)
+  \sum_{1\leq j\leq n} \varphi_j(t-T2^{-n})  2^n T^{-1}
(t-\underline{t}_n) \big[ \beta_j(\underline{t}_n) -\beta_j(t_n)\big].
\]
 For fixed $j$ the process $\big( \varphi_j(t_k) (\beta_j(t_{k+1})
- \beta_j(t_k))\, ,\,  0\leq k<2^n\big)$  is a martingale
increments. Therefore,  the Burkholder and Schwarz inequalities,
\eqref{Ds-bnd}, \eqref{s-bnd} and \eqref{g-N-set2}, yield
\begin{align}\label{s7-s31}
\EX & \Big( \sup_{t\in [0,T]} |\tilde S^{(1)}_n(t)|\Big)
\nonumber \\ & \le C \Big\{
\EX \int_0^T \sum_{1\leq j\leq n} \varphi_j (s)^2 ds \Big\}^{\frac{1}{2}}
+ C  \EX\Big( \sum_{1\leq j\leq n} \max_{1\leq k<2^n}   |\varphi_j(t_k)| \,
|\beta_j(t_{k+1} ) -\beta_j(t_k)|\Big)
\nonumber  \\
&\leq C_{N,T} \Big\{n  \EX \int_0^T |W_n(\underline{s}_n) - W_n( s_n)|_0^2\,  ds
\Big\}^{\frac{1}{2}}
\nonumber \\
&\qquad
+ C_{N,T}  \EX\Big\{ n\sum_{1\leq k<2^n} \sum_{1\leq j\leq n} |W_n(t_k) - W_n(t_{k-1})|_0^2
 |\beta_j(t_{k+1}) - \beta_j(t_k)|^2\Big\}^{\frac{1}{2}}
\nonumber \\
&\leq C_{N,T} \sqrt{n}\Big[ 2^{-\frac{n}{2}} + \Big\{ \sum_{1\leq j\leq n} \sum_{1\leq k<2^n}
\EX |W_n(t_k)-W_n(t_{k-1 })|_0^2 \EX |\beta_j(t_{k+1})-\beta_j(t_k)|^2\Big\}^{\frac{1}{2}}
\nonumber \\
 & \le C(N,T)\,  n\,  2^{-{n}/{2}}.
\end{align}
\noindent {\it Bound for $ \tilde S^{(2)}_n$.}\quad
 For $i=1, \cdots, 2^n-1$, $j=1, \cdots, n$ and $l\neq j$ set
\[\tilde{\Phi}_{j,l}(i) = 2^{2n} T^{-2} 1_{\{ t_i\leq \tau_n\}} 1_{G_N^n(t_i)}
\Big( D\ts_j(u^n(t_i)) \ts_l(u^n(t_i)) \, , \, u^n(t_i)-u(t_i) \Big).\]
Then $\tilde{\Phi}_{j,l}(i)$ is ${\mathcal F}_{t_i}$ measurable and   since for $l\neq j$, ${\mathcal F}_{t_{i-1}}$,
$\beta_j(t_i)-\beta_j(t_{i-1})$ and $\beta_l(t_i)-\beta_l(t_{i-1})$ are independent,
if one sets
$Z_{j,l}(i)= \big( \beta_l(t_i)-\beta_l(t_{i-1}) \big) \big(  \beta_j(t_i)-\beta_j(t_{i-1})$,
the following process $(\tilde{M}_k, 2\leq k\leq 2^n)$  is a $({\mathcal F}_{t_k}) $
 centered martingale:
\begin{align*}  \tilde{M}_k&= \sum_{2\leq i\leq   k} \sum_{1\leq j\leq n} \sum_{l\neq j}
 \int_{t_i}^{t_{i+1}} \tilde{\Phi}_{j,l}(i-1) (s-t_i) Z_{j,l}(i) ds \\
&=T^2 2^{-(1+2n)}  \sum_{2\leq i\leq   k} \sum_{1\leq j\leq n} \sum_{l\neq j}
\tilde{\Phi}_{j,l}(i-1)  Z_{j,l}(i).
\end{align*}
Furthermore, if $i<i'$ and $l'\neq j'$, or $i'<i$ and $l\neq j$, or $i=i'$ and
$\big(\min(j,l),\max(j,l)\big)\neq \big(\min(j',l'),\max(j',l')\big)$,  one has
$
\EX\big[ \tilde{\Phi}_{j,l}(i-1)
 Z_{j,l}(i) \tilde{\Phi}_{j',l'}(i'-1)  Z_{j',l'}(i')\big] =0$.
Hence Doob's, Schwarz's inequalities together with \eqref{Ds-bnd}, \eqref{s-bnd} and \eqref{g-N-set2} yield
\begin{align}\label{majoMt1}
& \EX  \Big( \max_{2\leq k<2^n} |\tilde{M}_k|\Big)^2 \leq \EX \Big(  \max_{2\leq k<2^n} |\tilde{M}_k|^2\Big)
\leq 4 \EX\big(\tilde M_{2^n-1}^2\big)
\nonumber \\
& \leq C_T 2^{-4n} \!\! \sum_{2\leq i<2^n} \sum_{1\leq j\leq n} \sum_{l\neq j}
 \EX\big( |\tilde{\Phi}_{j,l}(i-1)|^2\big)
  \EX  \big( | \beta_l(t_i)-\beta_l(t_{i-1})|^2\big)
 \EX  \big( | \beta_j(t_i)-\beta_j(t_{i-1})|^2\big)
\nonumber \\
& \leq  C(N,T)\, n\, 2^{-n}.
\end{align}
A   computation similar to that  performed in \eqref{majoM2} proves that
\begin{align} \label{majoMt2}
\EX & \Big( \max_{2\leq k<2^n} \sup_{t_k\leq t\leq t_{k+1}} \Big| \sum_{1\leq j\leq n} \sum_{l\neq j}
\int_{t_k}^{t}  \tilde{\Phi}_{j,l}(k-1) (s-t_k) Z_{j,l}(k) ds\Big| \Big)
 \\
&\leq  T^2 2^{-2n} \Big\{  \sum_{2\leq k<2^n}  \sum_{1\leq j\leq n} \sum_{l\neq j} 2^{-4n}
 \EX\big( |\tilde{\Phi}_{j,l}(k-1)|^2\big)
\nonumber \\
& \qquad\qquad \qquad\times
 \EX  \big( | \beta_l(t_k)-\beta_l(t_{k-1})|^2\big)
 \EX  \big( | \beta_j(t_k)-\beta_j(t_{k-1})|^2\big)\Big\}^{\frac{1}{2}} \leq  C(N,T)\,  n \, 2^{-{n}/{2}}.
 \nonumber
\end{align}
The inequalities \eqref{majoMt1} and \eqref{majoMt2} yield
\begin{equation} \label{s7-s32}
\EX\Big(\sup_{t\in [0,T]} |\tilde S^{(2)}_n(t)|\Big) \leq C(N,T) \, n \, 2^{-{n}/{2}}.
\end{equation}

\noindent {\it Bound for $\tilde S^{(3)}_n$.}\quad
Finally, for $i=1, \cdots, 2^n-1$ and $j=1, \ldots, n$, set
\begin{eqnarray*}
 \tilde{\Phi}_j(i)& =& 1_{\{ t_i\leq \tau_n\}} 1_{G_N^n(t_i)}
\Big( D\ts_j(u^n(t_i)) \ts_j(u^n(t_i)) \, ,\, u^n(t_i)-u(t_i)\Big), \\
Z_j(i)&=& \int_{t_i}^{t_{i+1}} \Big[ \frac{2^{2n}}{T^2} (s- t_i) \big( \beta_j(t_{i+1}) - \beta_j(t_i)\big)^2
- \frac{1}{2} \Big]\, ds.
\end{eqnarray*}
Then the random variables $Z_j(i)$ and $\tilde{\Phi}_j(i)$ are independent, $\EX (Z_j(i))=0$ and
$\EX (Z_j(i)^2) \leq
C_T 2^{-2n}$. Furthermore, for $(i,j)\neq (i',j')$, $\EX \big (\tilde{\Phi}_j(i) Z_j(i) \tilde{\Phi}_{j'}(i')
Z_{j'}(i') \big) =0$. The process defined for $k=1, \cdots, 2^n-1$
by $\tilde{N}_k = \sum_{1\leq i\leq k}\sum_{1\leq j\leq n} \tilde{\Phi}_j(i) Z_j(i)$
is a discrete $({\mathcal F}_{t_{k+1}})$ martingale. Doob's and Schwarz's inequalities, \eqref{Ds-bnd},
\eqref{s-bnd}     and \eqref{g-N-set2} imply that
\begin{align}\label{s7-s33}
\EX & \Big( \sup_{t\in [0,T]} |\tilde S^{(3)}_n(t)|\Big) \leq \EX \Big( \max_{1\leq k<2^n}
 \big|\tilde{N}_k\big| \Big)
\nonumber \\
&\quad + \EX \Big( \max_{1\leq k<2^n} \sup_{t_k\leq t\leq t_{k+1}}\Big|  \sum_{1\leq j\leq n}
\tilde{\Phi}_j(k)
 \int_{t_k}^{t} \Big[ \frac{2^{2n}}{T^2} (s- t_k) \big( \beta_j(t_{k+1}) - \beta_j(t_k)\big)^2
- \frac{1}{2} \Big]\, ds \Big| \Big)
\nonumber \\
&\leq C \EX\big(\big|\tilde{N}_{2^n-1}\big|^2\big)^{\frac{1}{2}}
+ \Big( 2^n n \max_{1\leq k<2^n} \max_{1\leq j\leq n} \EX \big(  \tilde{\Phi}_j(k)^2 \big)
\EX\big(Z_j(k)^2\big) \Big)^{\frac{1}{2}}
\nonumber \\
& \leq C \Big( n 2^n \max_{2\leq k<2^n} \max_{1\leq j\leq n}  \EX \big(  \tilde{\Phi}_j(k)^2 \big)
\EX\big(Z_j(k)^2\big) \Big)^{\frac{1}{2}}
\leq C(N,T) \, n^{\frac{1}{2}} \, 2^{-\frac{n}{2}}.
\end{align}
The relations in \eqref{decom-s7} -- \eqref{s7-s33} conclude the proof of Lemma \ref{S7}.
\end{proof}

Now using  Proposition~\ref{pr:t-s67}, Lemmas \ref{S6} and \ref{S7},
we obtain
 \eqref{WZ-cnvrg-3};
this completes  the proof of Theorem \ref{th:WZ-appr}.

\section{Appendix} \label{appendix}
We consider  some additional properties of the solution to \eqref{main-eq}. The aim
of this section is to introduce some more properties on the coefficients
$\sigma$, $\tilde{\sigma}$, $G$ and $R$ which will ensure that the property \eqref{crit-bound}
holds. Let $\bar{C}$ denote a constant such that
\begin{equation} \label{comparnorm}
|u|\leq \bar{C} \|u\|\, , \forall  u\in V.
\end{equation}
\subsection{Exponential moments}
\begin{prop}\label{pr:exp-m}
Let $h(t)\in {S}_M$ be deterministic, suppose that the  operators $G$ and $\s+\ts$
are uniformly bounded and that the linear growth of $R$ is small enough,  i.e.,  there exist
positive constants $K_0$, $R_0$ and $  \tilde{R}_0$ such that:
\begin{equation}\label{CApp}
|G(u)|^2_{L (H_0, H)} \leq K_0,\;
|(\s + \ts)(u)|^2_{L_Q}  \leq K_0,\;  |R(u)|\leq R_0 + \tilde{R}_0|u|\;  \mbox{\rm with }
\tilde{R}_0<\bar{C}^{-2}
\end{equation}
for every $u\in H$. Let $u(t)$ be the  solution to \eqref{main-eq} such that
the initial condition has some exponential moment, i.e., $\EX\exp(\alpha_0 |\xi|^2)<\infty$
for some $\alpha_0>0$. Then there exist constants $\alpha_1 \in ]0,\alpha_0]$, $\beta(\alpha)>0$
and $c_i>0, i=1,2$ such that for $0<\alpha<\alpha_1$ and $t\in [0,T]$:
\begin{equation}\label{exp-1}
\EX\exp\left(  \a |u(t)|^2+\b(\a)\int_0^t\|u(s)\|^2 ds\right)\le
 e^{c_1t + c_2M}\EX\exp(\alpha |\xi|^2).
\end{equation}
 The same estimate holds
for Galerkin approximations $u_n$ of $u$ with constants $c_1,c_2$ which do not depend on $n$.
\end{prop}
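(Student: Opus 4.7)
The key observation is that condition \eqref{CApp} together with antisymmetry of $B$ allows one to apply It\^o's formula to the exponential functional $\exp(\a |u_n|^2 + \b(\a)\int_0^\cdot \|u_n\|^2\, ds)$ for Galerkin approximations $u_n$, to identify a negative coefficient in front of $\|u_n\|^2$, and to then conclude by Gronwall's lemma.

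First I would work with the Galerkin approximation $u_n\in V_n$, apply It\^o's formula to $|u_n(t)|^2$, and use the antisymmetry relation $\langle B(u_n),u_n\rangle=0$, so that
\begin{align*}
d|u_n|^2 &= -2\|u_n\|^2\, dt - 2(R(u_n),u_n)\, dt + 2(u_n, G(u_n) h)\, dt \\
&\quad + |(\s+\ts)(u_n)|^2_{L_Q}\, dt + 2(u_n, (\s+\ts)(u_n)\, dW),
\end{align*}
and the quadratic variation of the martingale part is bounded by $4\, |(\s+\ts)(u_n)|^2_{L_Q}\, |u_n|^2\, dt\le 4K_0\, |u_n|^2\, dt$. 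Setting $Y_t=\a|u_n(t)|^2 + \b\int_0^t\|u_n(s)\|^2 ds$ and applying It\^o's formula to $Z_t:=e^{Y_t}$, a direct computation shows that the drift of $Z_t/Z_t$ equals
\[
\a\big[-2\|u_n\|^2 - 2(R(u_n),u_n) + 2(u_n,G(u_n)h) + |(\s+\ts)(u_n)|^2_{L_Q}\big] + \b\|u_n\|^2 + \frac{\a^2}{2}\cdot \tfrac{d\langle |u_n|^2\rangle_t}{dt}.
\]

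Next I would exploit \eqref{CApp} and \eqref{comparnorm} to group the $\|u_n\|^2$-contributions. Using $|u_n|^2\le \bar C^2\|u_n\|^2$, the bound $|(R(u_n),u_n)|\le R_0|u_n|+\tilde R_0 |u_n|^2$, the inequality $2\a|(u_n,G(u_n)h)|\le \a\e \|u_n\|^2 + \a \bar C^2 K_0|h|_0^2/\e$ and analogously for the $R_0|u_n|$ term, the coefficient of $\|u_n\|^2$ in the drift can be made equal to
\[
-2\a + \b + 2\a\tilde R_0\bar C^2 + 2\a^2 K_0\bar C^2 + \a(\e_1+\e_2)
= -2\a\bigl(1-\tilde R_0\bar C^2-\a K_0\bar C^2-\tfrac{\e_1+\e_2}{2}\bigr) + \b.
\]
Since $\tilde R_0\bar C^2<1$ by assumption, I choose $\a_1>0$ so small and $\e_1,\e_2>0$ so small that for every $\a\in(0,\a_1]$ the bracket above is bounded below by a positive constant, and then set $\b(\a)>0$ equal to $\a$ times that positive bracket. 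With this choice the $\|u_n\|^2$-contribution to the drift vanishes, and what remains is bounded above by $(c_1+c_2|h|_0^2)\, Z_t$ for constants $c_1,c_2$ depending only on $K_0,R_0,\tilde R_0,\bar C$ and $\a$.

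Thus $dZ_t \le (c_1+c_2|h(t)|_0^2)\, Z_t\, dt + dM_t'$ where $M'$ is a local martingale. Localizing via $\tau_N=\inf\{t:|u_n(t)|\ge N\}\wedge T$, taking expectations (which kills the stopped martingale since $Z_{t\wedge\tau_N}$ is bounded), applying Gronwall's inequality and letting $N\to\infty$ via Fatou's lemma yields
\[
\EX\, Z_t \le \EX e^{\a|\xi|^2}\cdot \exp\Big(c_1 t+c_2\int_0^t|h(s)|_0^2\, ds\Big)\le \EX e^{\a|\xi|^2}\cdot e^{c_1 t+c_2 M}
\]
for $h\in S_M$, with constants independent of $n$. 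Finally I would pass to the limit $n\to\infty$, using that $u_n\to u$ in appropriate topology and the lower semicontinuity of $v\mapsto \a|v(t)|^2+\b(\a)\int_0^t\|v(s)\|^2 ds$ together with Fatou's lemma, to transfer \eqref{exp-1} to the solution $u$ itself. The main technical point is calibrating $\a_1$ and $\b(\a)$ to exactly absorb all the $\|u_n\|^2$-contributions while keeping a strict gap, which is precisely what the smallness condition $\tilde R_0<\bar C^{-2}$ allows.
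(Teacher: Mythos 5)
Your proposal is correct and follows essentially the same route as the paper: It\^o's formula applied to the exponential functional $\exp\big(\a|u|^2+\b\int_0^\cdot\|u\|^2\,ds\big)$, antisymmetry of $B$, absorption of all $\|u\|^2$-contributions using $|u|\le\bar C\|u\|$ and the smallness condition $\tilde R_0<\bar C^{-2}$ to choose $\a_1$ and $\b(\a)$, followed by localization, Gronwall and a passage to the limit. The only cosmetic differences are that the paper runs the argument directly on $u$ (localizing with stopping times controlled by \eqref{eq3.1} and concluding by monotone convergence) rather than on the Galerkin approximations, and that your choice of $\b(\a)$ actually leaves a strictly negative (rather than vanishing) coefficient of $\|u\|^2$, which is harmless.
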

\begin{proof}
Let $\sigma_0=\sigma + \tilde{\sigma}$,
$\Phi_0(t)=\exp\left( \a |u(t)|^2\right)$ and $\Phi(t)=
 \Phi_0(t)\exp\left( \b\int_0^t\|u(s)\|^2 ds\right)$. By It\^o's formula
we have for every $t\in [0,T]$:
\[
d \Phi(t)=\left[ \b\|u(t)\|^2 \Phi_0(t) dt+ d\Phi_0(t)\right]
\exp\Big( \b\int_0^t\|u(s)\|^2 ds \Big)
\]
and
\[
d \Phi_0(t)= \a\Phi_0(t) \left[ 2(u(t),du(t)) + |\s_0(u(t))|^2_{L_Q}  dt +
2\a |\s_0^*(u(t)) u(t)|^2_{H_0} dt\right].
\]
Therefore, if $I(t)=2\alpha\int_0^t \Phi(s) \big( u(s)\, ,\, \s_0(u(s)) dW(s)\big)$,
\begin{eqnarray*}
d \Phi(t) &=  &\Phi(t) \Big[ -( 2\a-\b) \|u(t)\|^2 +2\a\big(-R(u(t))+G(u(t))h(t),u(t))
+ \a|\s_0(u(t))|^2_{L_Q}
\\ & & \qquad\quad  +
2\a^2 |\s_0^*(u(t)) u(t)|^2_{H_0}\Big]dt + I(t).
\end{eqnarray*}
For any integer $n\geq 1$, let $\tau_n=\inf\{t :
 \sup_{0\leq s\leq t} |u(s)|^2 + \int_0^t \|u(s)\|^2 ds \geq n\}\wedge T$. Then
we have $\EX(I(t\wedge \tau_n)=0$ for $t\in [0,T]$.
Since $|u(t)|\leq \bar{C} \|u(t)\|$, if  $\tilde{R}_0$ from \eqref{CApp}
is such that  $\tilde{R}_0<\bar{C}^{-2}$, for $\alpha_1\leq \alpha_0$ small
enough and $0<\alpha<\alpha_0$, we have $1-(\tilde{R}_0 +2^{-1}\alpha K_0)\bar{C}^2>0$. For $0<\beta<
\beta(\alpha)$ with $\beta(\alpha)$ small enough, and for $\epsilon$ small enough,
Fubini's theorem implies:
\begin{align*}
 \sup_{0\leq s\leq t} \EX \Phi(s\wedge \tau_n)&\leq \exp(\alpha |\xi|^2)  + \EX \int_0^{t\wedge \tau_n} \Phi(s) \big[
R_0 \epsilon^{-1} + K_0  \epsilon^{-1} |h(s)|_0^2 + \alpha K_0\big] \, ds\\
&\leq \exp(\alpha |\xi|^2) +  \int_0^t  \EX \Phi(s\wedge \tau_n) \big[
R_0 \epsilon^{-1} + K_0  \epsilon^{-1} |h(s)|_0^2 + \alpha K_0\big] \, ds.
\end{align*}
Since $ \Phi(.\wedge\tau_n)$ is bounded, Gronwall's lemma implies that
 there exist constants $c_1, \, c_2$ depending on $K_0$, $R_0$ and $\alpha$ such that for every $t\in [0,T]$,
\[
\sup_n \sup_{0\leq t\leq T} \EX  \Phi(t\wedge \tau_n) \leq \exp(\alpha |\xi|^2) \, \exp(c_1 T+c_2M).
\]
Using \eqref{eq3.1} and the monotone convergence theorem, we conclude the proof by letting $n\to \infty$.
\end{proof}

\subsection{Properties in $\HH$.}
Now we are in position to state the conditions which
guarantee the validity of conditions (i) and (ii) in Theorems~\ref{th:WZ-appr}  and \ref{th:support}.
\smallskip\par
\noindent {\bf Condition (BS+)} Let condition  {\bf (B)} hold with $\HH= Dom(A^{1/4})$
and suppose that there exists a constant $K>0$ such that for $u\in \HH$:
\begin{equation} \label{s-sm1-4}
|A^{\frac{1}{4}}
\sigma(t,u)|^2_{L_Q(H_0,H)}+ |A^{\frac{1}{4}}
\ts(t,u)|^2_{L_Q(H_0,H)} \leq K(1+\|u\|^2_{\mathcal  H}).
\end{equation}

\noindent {\bf Condition (GR1)} There exist constants $\bar{K}_0$ and $\bar{R}_0$ such that
for every  $u\in \HH$:
\begin{equation}\label{growth1}
|A^{\frac{1}{4}} G(u)|^2_{L(H_0,H)} \leq \bar{K}_0(1+\|u\|^2_{\HH}) \, ,\;
 |A^{\frac{1}{4}} R(u)|\leq  \bar{R}_0(1+\|u\|_{\HH})\, .
\end{equation}

\begin{prop}\label{pr:add-est}
Assume that
 conditions {\bf (BS+)}, {\bf (GR1)},  as well as  \eqref{s-bnd} and \eqref{s-lip} from
condition {\bf (S)} are satisfied.
Let the hypotheses of Proposition \ref{pr:exp-m} be in force
and let $u$ be the solution to \eqref{main-eq}.
Assume in addition that $\EX\|\xi\|^2_\HH<\infty$.
Then there exist $q>0$ and $q_*>0$ such that
\begin{equation} \label{crit-bound-0}
\EX \Big( {\rm ess}\sup_{[0,T]}\| u(t)\|^q_\HH \Big) +
\EX \Big( \Big| \int_0^T | A^{3/4} u(\tau)|^2d \tau\Big|^{q_*}\Big)<\infty .
\end{equation}
\end{prop}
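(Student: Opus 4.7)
}
The strategy is to derive an a priori energy identity for the squared $\HH$-norm of $u$ by applying It\^o's formula to the functional $v\mapsto |A^{1/4} v|^2$, combine it with the interpolation estimate \eqref{preB} and the hypotheses {\bf (BS+)} and {\bf (GR1)}, and then close the estimate by a pathwise Gronwall-type inequality whose exponential factor is controlled by the moment bound \eqref{exp-1}. Since the solution $u$ is only in $V=Dom(A^{1/2})$ in an $L^2$-in-time sense, the formal It\^o computation at the level of $|A^{1/4}\cdot|^2$ is not a priori justified on $u$; I would therefore carry out the calculation on the Galerkin approximations $u_N$ (for which the basis diagonalises $A^{1/4}$ and makes the finite-dimensional It\^o rule available), produce bounds that are uniform in $N$ using the Galerkin part of Proposition~\ref{pr:exp-m}, and transfer them to $u$ by lower semicontinuity.

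Applied to $u_N$, It\^o's formula gives
\begin{align*}
\|u_N(t)\|_\HH^2 &+ 2\int_0^t |A^{3/4} u_N(s)|^2\, ds   =  \|P_N\xi\|_\HH^2 - 2\int_0^t \langle B(u_N(s)), A^{1/2} u_N(s)\rangle\, ds \\
&+ 2\int_0^t \big(A^{1/4}[G(u_N)h - R(u_N)]\,,\, A^{1/4} u_N\big)\, ds
+ \int_0^t |A^{1/4}(\s+\ts)(u_N)|_{L_Q}^2\, ds + M_N(t),
\end{align*}
where $M_N$ is a continuous local martingale. The crucial term is the cubic one; invoking \eqref{preB} with $u_1=u_2=u_N$ and $u_3=A^{1/2}u_N$, and using that $\|A^{1/2} v\|_\HH = |A^{3/4} v|$, yields $|\langle B(u_N), A^{1/2}u_N\rangle| \le C\|u_N\|_\HH\|u_N\|\,|A^{3/4} u_N| \le \tfrac12 |A^{3/4} u_N|^2 + C\|u_N\|_\HH^2\,\|u_N\|^2$, and the first piece is absorbed on the left. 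The remaining drift contributions involving $R$, $G$ and $\s+\ts$ are estimated by Young's inequality together with {\bf (GR1)} and \eqref{s-sm1-4}, bounding them by $C(1+\|u_N\|_\HH^2)(1+|h|_0^2)$. Setting $\Psi_N(t) = \|u_N(t)\|_\HH^2 + \int_0^t |A^{3/4} u_N|^2\, ds$ and $\Phi(s) = 1 + \|u_N(s)\|^2 + |h(s)|_0^2$, this produces the pathwise inequality
\[
\Psi_N(t) \le \|\xi\|_\HH^2 + C\int_0^t \Phi(s)\,(1+\Psi_N(s))\, ds + \sup_{s\le t}|M_N(s)|.
\]

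Pathwise Gronwall then yields, modulo a standard localisation,
\[
\Psi_N(t) \le \Big(\|\xi\|_\HH^2 + C\int_0^t \Phi(s)\,ds + \sup_{s\le T}|M_N(s)|\Big)\exp\Big(C\int_0^T \Phi(s)\,ds\Big),
\]
and a similar bound for $\int_0^T |A^{3/4} u_N|^2 ds$. The main obstacle---and the reason only fractional moments of order $q,q_*$ are claimed---is the control of the exponential factor. This is precisely where Proposition~\ref{pr:exp-m} enters: because $h$ is deterministic in $S_M$ and \eqref{CApp} is assumed, \eqref{exp-1} (applied uniformly in $N$ to the Galerkin approximations, with $\EX\exp(\alpha_0|\xi|^2)<\infty$ following from $\EX\|\xi\|_\HH^2<\infty$ after possibly shrinking $\alpha_0$) provides $\EX\exp\big(\gamma\int_0^T \|u_N\|^2\,ds\big)<\infty$ uniformly in $N$ for some $\gamma>0$. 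Choosing $q>0$ so small that $Cq<\gamma$, raising the Gronwall bound to the power $q$ and applying H\"older's inequality with conjugate exponents calibrated to this $q$, together with the BDG inequality for $M_N$ (whose bracket is dominated by $C\int_0^T(1+\|u_N\|_\HH^2)\|u_N\|_\HH^2\,ds$ and hence controllable after absorption), yields uniform-in-$N$ bounds on $\EX\big(\sup_{t\le T}\|u_N(t)\|_\HH^q\big)$ and $\EX\big((\int_0^T |A^{3/4} u_N|^2\,ds)^{q_*}\big)$ for suitable $q,q_*>0$. Passing to the limit $N\to\infty$ by lower semicontinuity gives \eqref{crit-bound-0}.
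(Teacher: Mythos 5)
Your proof takes essentially the same route as the paper's: Galerkin approximation, It\^o's formula for $|A^{1/4}\cdot|^2$, absorption of the trilinear term via \eqref{preB}, a pathwise Gronwall bound whose exponential factor is tamed by the exponential moments of Proposition~\ref{pr:exp-m}, small powers $q,q_*$ with H\"older/Schwarz, Burkholder for the martingale term, and passage to the limit by lower semicontinuity. The one slip is your parenthetical assertion that $\EX\exp(\alpha_0|\xi|^2)<\infty$ follows from $\EX\|\xi\|^2_\HH<\infty$ after shrinking $\alpha_0$ --- it does not (a second moment never yields exponential integrability) --- but this is harmless because that exponential moment is already part of the hypotheses of Proposition~\ref{pr:exp-m}, which the statement assumes to be in force.
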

\begin{proof} We consider the  Galerkin approximations $u_n$ and, to ease notations, we skip the  index $n$.
Let $\s_0=\s+\ts$ and  for  $t \in [0,T]$ set
\[ I(t):=\sup_{0\leq s\leq t} 2\left|\int_0^{t} \big(A^{\frac{1}{4}}
 \sigma_0(u(r))  dW(r)\, ,\,  A^{\frac{1}{4}} u(r)\big)\right| .
\]
Using   It\^o's formula for $\|u(t)\|^2_{\HH}=|A^{1/4}u(t)|^2$
and usual upper estimates, we  deduce that
\begin{align*}
 &\sup_{s\leq t}\|u(s)\|_{\mathcal H}^2  +
2  \int_0^{t} |A^{\frac{3}{4}} u(s)|^2\,  ds
 \leq \|\xi\|_{\mathcal H}^2 + 2  \int_0^{t} |\langle B(u(s),u(s)) ,
  A^{\frac{1}{2}} u(s)\rangle|\, ds
\\ &
+  I(t) +
\int_0^{t}\!\! 4K (1+\|u(s)\|_{\mathcal H}^2 ) ds
+ 2\int_0^{t}\!\!|(-R(u(s))+ G(u(s))h(s), A^{1/2}u(s))|ds.
\end{align*}
The inequality \eqref{preB}  and condition {\bf (GR1)} imply
\begin{align*}
|\langle B(u,u),   A^{\frac{1}{2}} u\rangle|
\le C_0\|u\|_\HH\|u\| |A^{3/4}u| &\le |A^{3/4}u|^2+ C_0^2\, 2^{-2}\,  \|u\|^2_\HH\|u\|^2\, ,\\
|(-R(u)+ G(u)h, A^{1/2}u)|&\le c_0(1+|h|_0)(1+\|u\|_{\mathcal H}^2),
\end{align*}
where $c_0$ depends on $\bar{K}_0$ and $\bar{R}_0$.
Hence, for
$ X(t)=  \sup\{ \|u(s)\|_{\mathcal H}^2 \, : \, 0\leq s \leq t \}$,
we deduce
\begin{equation} \label{crit-bound-0X}
 X(t) +  \int_0^{t} |A^{\frac{3}{4}} u(s)|^2\,  ds\leq  \|\xi\|_{\mathcal H}^2 + I(t)+ c_1 +c_2\int_0^t
\left[1+ |h(s)|_0+ \| u(s)\|^2\right]\, X(s) \,  ds,
\end{equation}
where
the constant  $c_1$ depends on $K, \bar{K}_0,\bar{R}_0,T,M$
and $c_2$ depends on $\bar{K}_0$ and $\bar{R}_0$.
  Gronwall's lemma yields
\begin{eqnarray*}
X(t)& \leq & \left[ c_1 +\|\xi\|_{\mathcal H}^2 + I(t)\right]
\exp\Big( c_2\int_0^t
\left[1+ |h(s)|_0+ \| u(s)\|^2\right] ds\Big).
\end{eqnarray*}
This implies that for $\delta >0$:
\begin{eqnarray*}
\EX |X(t)|^\delta & \leq & C(M,T)\;
 \Big[ \EX \left(c_1+ \|\xi\|_{\mathcal H}^2+ I(t)\right)^{2\delta}\Big]^{1/2}
\Big[\EX \exp\Big( 2 c_2\delta \int_0^t \| u(s)\|^2ds\Big) \Big]^{1/2}.
\end{eqnarray*}
Thus  Proposition ~\ref{pr:exp-m} implies that for $\delta$ small enough
we have:
\[
\EX |X(t)|^\delta
 \leq   C(M,T) \EX\big[\exp(2c_2 \delta |\xi|^2) )\big]^{\frac{1}{2}} \,
 \left[ 1 +\EX\|\xi\|_{\mathcal H}^2 + \EX I(t)\right]^{\frac{1}{2}}.
\]
The Burkholder-Davies-Gundy inequality,
 relations \eqref{s-sm1-4} and \eqref{eq3.1} yield
\begin{eqnarray*}
\EX I(t) &\leq   &
 6 \,
\EX \Big\{ \int_0^{t}
|A^{1/4}u(r)|^{2} \; |A^{1/4} [ \s+\ts](u(r))|^2_{L_Q}\;
 dr \Big\}^\frac12    \nonumber  \\
 &\leq \; & 6 \,
\EX \Big\{4 K \int_0^{t}
  \|u(r)\|_\HH^{2} \left(1+ \|u(r)\|_\HH^{2}\right)\;
 dr \Big\}^\frac12 \le  c_4(T,K,C)\, .
\end{eqnarray*}
Thus there exists constants $q>0$ and $c:=c(K,T,M,C)$ such that
\begin{equation} \label{majoN}
 \sup_{n\geq 1} \, \EX\Big(  \sup_{0\leq s\leq T } \|u_n(s)\|_{\mathcal H}^q \Big)= c <+\infty
\end{equation}
for the Galerkin approximations $u_n$. As $n\to +\infty$, after limit transition we
deduce that the first term in the left hand-side of \eqref{crit-bound-0} is finite.
\par
To  prove that the second term is finite as well,   note that \eqref{crit-bound-0X}
implies that for every $n$:
\[
 \int_0^{t} |A^{\frac{3}{4}} u_n(s)|^2\,  ds\leq  C+\|\xi\|_{\mathcal H}^2 + I(t)
  +c_2 {\rm ess}\sup_{0\leq s\leq T }\|u_n(s)\|_{\mathcal H}^2 \int_0^t
\left[1+ |h(s)|^2_0+ \| u_n(s)\|^2\right] \,  ds.
\]
Thus we can use \eqref{majoN} and  complete  the  proof of   \eqref{crit-bound-0}
by a similar argument.
\end{proof}

We prove that the process $u$ solving \eqref{main-eq} belongs to  $ {\mathcal C}([0,T],\HH)$ a.s.
\begin{prop}
Let the  conditions of Proposition~\ref{pr:add-est} be satisfied and let $u$ be the solution
to \eqref{main-eq}. Then
the process $u$ belongs to  $ {\mathcal C}([0,T],\HH)$ a.s.
\end{prop}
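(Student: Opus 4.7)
The plan is to establish $\HH$-continuity of $u$ via two ingredients: (i) weak continuity of $u$ in $\HH$, and (ii) continuity of the squared norm $\|u(\cdot)\|_\HH^2$. Together these imply strong $\HH$-continuity, by the standard Hilbert-space fact that weak convergence plus norm convergence implies strong convergence.

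For (i), Proposition~\ref{pr:add-est} gives $u \in L^\infty(0,T;\HH)$ almost surely, while Theorem~2.4 of \cite{ch-mi} gives $u \in \mathcal{C}([0,T]; H)$ almost surely. If $t_n \to t$ in $[0,T]$, then $\{u(t_n)\}$ is bounded in $\HH$ and hence weakly relatively compact; since $u(t_n) \to u(t)$ strongly in $H$ and $\HH \hookrightarrow H$ continuously, the unique $\HH$-weak cluster point of $u(t_n)$ is $u(t)$, so that $u(t_n) \rightharpoonup u(t)$ in $\HH$.

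For (ii), I would apply It\^o's formula to $|A^{1/4}u_n(t)|^2$ for the Galerkin approximation $u_n$, as at the start of the proof of Proposition~\ref{pr:add-est}. This produces a representation of the form
\begin{equation*}
\|u_n(t)\|_\HH^2 + 2\int_0^t |A^{3/4} u_n(s)|^2\, ds = \|P_n\xi\|_\HH^2 + \int_0^t \Phi_n(s)\, ds + M_n(t),
\end{equation*}
where $\Phi_n$ collects the contributions of $-2\langle B(u_n), A^{1/2}u_n\rangle$, the $\HH$-inner products with $-R(u_n)+G(u_n)h$, and the trace term $|A^{1/4}(\sigma+\ts)(u_n)|_{L_Q}^2$, while $M_n$ is a continuous real-valued martingale. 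Using the apriori bounds in \eqref{crit-bound-0} and \eqref{majoN}, the growth hypotheses \textbf{(BS+)} and \textbf{(GR1)}, and the strong convergence $u_n \to u$ in $L^2(\Omega; L^2(0,T;V))$, each term in the right-hand side passes to the limit $n\to\infty$, yielding the analogous identity for $\|u(t)\|_\HH^2$ as the sum of an absolutely continuous integral and a continuous real martingale, hence almost surely continuous in $t$.

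The main obstacle is controlling the nonlinear contribution $-2\int_0^t \langle B(u_n(s)), A^{1/2}u_n(s)\rangle \, ds$ in the Galerkin limit. By antisymmetry \eqref{as} and \eqref{preB} this quantity is bounded by $C\|u_n\|_\HH \|u_n\|\, |A^{3/4}u_n|$; by \eqref{crit-bound-0} and Schwarz's inequality this is in $L^1(0,T)$ almost surely, and \eqref{majoN} provides uniform-in-$n$ integrability. A dominated-convergence argument combined with the convergence of $u_n$ to $u$ in $L^2(0,T;V)$ and the weak lower semicontinuity of $\int_0^t |A^{3/4}\cdot|^2$ then justifies the limit. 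Once (ii) is established, the combination of (i) and (ii) yields $u \in \mathcal{C}([0,T]; \HH)$ almost surely, completing the proof.
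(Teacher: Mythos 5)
Your overall strategy (weak $\HH$-continuity plus continuity of $t\mapsto\|u(t)\|_\HH^2$ implies strong continuity, since $\HH=Dom(A^{1/4})$ is a Hilbert space) is a legitimate classical route and is genuinely different from the paper's. Step (i) is essentially fine: the bound in \eqref{crit-bound-0} is only an essential supremum, but combined with $u\in{\mathcal C}([0,T];H)$ it upgrades to a true supremum by exactly the weak-compactness argument you sketch. The problem is step (ii). Passing to the limit in the Galerkin identity
\begin{equation*}
\|u_n(t)\|_\HH^2 + 2\int_0^t |A^{3/4}u_n(s)|^2\,ds = \|P_n\xi\|_\HH^2+\int_0^t\Phi_n(s)\,ds+M_n(t)
\end{equation*}
using ``weak lower semicontinuity of $\int_0^t|A^{3/4}\cdot|^2$'' only yields an energy \emph{inequality} for $u$: lower semicontinuity controls the limit from one side, and nothing in the available a priori bounds gives the strong convergence of $u_n$ in $L^2(0,T;Dom(A^{3/4}))$ that would be needed to recover the \emph{equality}. (Convergence of $u_n$ in $L^2(0,T;V)$ does not help, since $A^{3/4}$ lives one regularity level above $V=Dom(A^{1/2})$; and note that \eqref{crit-bound-0} only provides a small moment $q_*$ of $\int_0^T|A^{3/4}u|^2\,d\tau$, so even uniform integrability in $\omega$ is delicate.) An energy inequality combined with weak continuity gives at best right-continuity of $\|u(\cdot)\|_\HH$, not two-sided continuity --- this is precisely the classical obstruction for weak solutions of Navier--Stokes type equations. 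To make your scheme work you would have to justify the It\^o formula for $\|u(t)\|_\HH^2$ directly on the limit equation in the Gelfand triple $Dom(A^{3/4})\subset\HH\subset Dom(A^{-1/4})$ (checking via \eqref{preB} that $B(u)\in L^2(0,T;Dom(A^{-1/4}))$ a.s.\ and localizing to handle the weak moments), which is a substantially different and heavier argument than the one you describe.

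The paper avoids the energy-equality issue altogether: it writes $u=e^{-\delta A}u+T_\delta u$ with $T_\delta=Id-e^{-\delta A}$, observes that $e^{-\delta A}u\in{\mathcal C}([0,T];\HH)$ for each fixed $\delta>0$ (the semigroup regularizes the drift terms from $V'$ into $\HH$ and makes the stochastic convolution an $\HH$-valued continuous martingale), and then applies It\^o's formula to $\|T_\delta u(t)\|_\HH^2$ together with dominated convergence to show $\EX\sup_{t\le T}\|T_\delta u(t)\|_\HH^{2p}\to0$ as $\delta\to0$ for small $p>0$. Almost sure continuity then follows as a uniform limit of continuous functions along a subsequence $\delta_k\to0$. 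You may want to adopt this smoothing device, or else supply the missing variational It\^o argument for the $\HH$-norm.
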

\begin{proof}
Let $\s_0=\s+\ts$; then for fixed $\delta >0$, we have
 $e^{-\delta A} u \in C([0,T],{\mathcal H})$.
 Indeed, \eqref{s-sm1-4} and \eqref{eq3.1}  imply that
 $
 \EX \int_0^T |A^{\frac{1}{4}} e^{-\delta A}  \sigma_0(u(s))|^2_{L_Q}\, ds
 <+\infty,
 $
 so that $\int_0^{t}  e^{-\delta A}  \sigma_0(u(s))
 \, dW(s) \in {\mathcal C}([0,T],{\mathcal H})$.
Since for $\delta >0$ the operator $e^{-\delta A}$ maps $H$ to $V$ and $V^\prime$
to ${\mathcal H}$,
we deduce that almost surely  the maps
 $A^{\frac{1}{4}} \, e^{-\delta A}\int_0^{t} [B((u(s)) + R(u(s))] \, ds $
and   $A^{\frac{1}{4}} \, e^{-\delta A}\int_0^{t} G((u(s)) \, h(s)\, ds $
belong to ${\mathcal C}([0,T],{\mathcal H})$.
 Therefore it  is sufficient to prove that
\begin{equation}\label{conv-cont}
\lim_{\delta\to0} \EX\Big( \sup_{0\leq t\leq T}
 \|u(t)-e^{-\de A}u(t)\|_{\mathcal H}^{2p}\Big)=0
\end{equation}
for some $p>0$.
Let $T_\de= Id-e^{-\de A}$ and apply It\^o's   formula to
$\|T_\de u(t)\|_{\mathcal H}^2$. This yields
\begin{align} \label{ito-g-de}
 \|T_\de u(t)\|_{\mathcal H}^2  =&
 \|T_\de \xi\|_{\mathcal H}^2  -2\int_0^{t}\!\! |A^{\frac{3}{4}}
 u(s)|^2 ds   +   2  I(t) +    \int_0^{t} \!\! |A^{\frac{1}{4}}
 T_\de \sigma_0(u(s))|_{L_Q}^2 ds  \nonumber \\
& \; -2\int_0^{t}\!\! \big\langle  B(u(s))+   R (u(s)) -
  G(u(s)) h(s),  A^{\frac{1}{2}} T^2_\de u(s)\big\rangle \, ds ,
\end{align}
where $I(t)=  \int_0^{t} \big( A^{\frac{1}{4}} T_\de \sigma_0(u(s))  dW(s),
A^{\frac{1}{4}} T_\de u(s)\big)$.
The Burkholder-Davies-Gundy and Schwarz inequalities together with
  \eqref{s-sm1-4} imply that  for any $p>0$:
\begin{eqnarray*}
\EX\sup_{0\leq t\leq T} |I(t)|^p &\le&
C_p \EX\left( \int_0^{T}\!\! \|T_\de u(s)\|_{\mathcal H}^2
 |A^{\frac{1}{4}} T_\delta \, \sigma_0(u(s))|^2_{L_Q} ds\right)^{p/2}  \\
&\leq  &  \frac{1}{2}\;  \EX\sup_{0\leq t\leq T}  \|T_\de u(t)\|_{\mathcal H}^{2p} +
\frac{C_p^2}{2} \EX \left(
\int_0^{T} \!\! |A^{\frac{1}{4}} T_\de \s_0( u(s))|_{L_Q}^2\, ds\right)^{p}.
 \end{eqnarray*}
Hence   \eqref{ito-g-de} yields for $0<p<1$ the existence of a constant $c_p$ such that
\begin{align*}
\EX\sup_{0\le t\le T } \| & T_\de u(t )\|_{\mathcal H} ^{2p}
\le  c_p\Big[ \|T_\de \xi\|_{\mathcal H}^{2p}+
  \EX \Big| \int_0^{T}\!\! | A^{\frac{1}{4}} T_\de \sigma_0 (u(s))|_{L_Q}^2 ds
 \Big|^{p}   \\
& \; +\,  \EX \Big(\int_0^{T}\!\!  \left|\big\langle  B(u(s))+  R (u(s)) -
  G(u(s)) h(s),  A^{\frac{1}{2}} T^2_\de u(s)\big\rangle \right|  \, ds\Big)^{p}
\Big].
\end{align*}
Since for every $u\in {\mathcal H}$, $\|T_\de u \|_{\mathcal H}  \to 0$   as $\delta\to 0$
and $\sup_{\delta>0} |T_\delta|_{L({\mathcal H},{\mathcal H})}\leq 1$,
 we deduce that if $\{\varphi_k\}$
denotes an orthonormal basis in $H$, then
$ | A^{\frac{1}{4}} T_\de \sigma_0 (u(s))Q^{1/2} \varphi_k|^2 \to 0$
for every $k$  and almost every $(\omega,s)\in \Omega\times [0,T]$.
Since $\sup_{\delta >0} \|e^{-\delta A}\|_{L({\mathcal H})}<+\infty$, \eqref{s-sm1-4}
implies
\[
{\displaystyle \sup_{\delta>0 }
|A^{\frac{1}{4}} T_\delta \sigma_0 (u)|_{L_Q}^2
\leq C (1+ \|u\|_{{\mathcal H}}^2) \in L^1(\Omega\times [0,T])} .
\]
Therefore, the Lebesgue  dominated convergence theorem yields
$$
\EX \int_0^{T}\!\! | A^{\frac{1}{4}} T_\de \sigma_0 (u(s))|_{L_Q}^2
ds \to 0 \quad\mbox{as }~~\delta \to 0.$$  Furthermore, using
\eqref{preB} we deduce
\begin{align*}
 \int_0^{T}  \Big|\big\langle  B(u(s)),
 A^{\frac{1}{2}} T^2_\de & u(s)\big\rangle \Big|  \, ds
 \le
C  \int_0^{T}  \|u(s)\|_\HH  \|u(s)\| | A^{\frac{3}{4}} T^2_\de u(s)|  \, ds \\
& \le
C\,  {\rm ess}\sup_{[0,T]} \|u(s)\|_\HH  \Big[
 \int_0^{T}    \|u(s)\|^2  \, ds\Big]^{1/2}
\Big[
 \int_0^{T} | A^{\frac{3}{4}} T^2_\de u(s)|^2  \, ds\Big]^{1/2}.
\end{align*}
Thus, using Proposition \ref{pr:add-est}  for  $p>0$ small enough and   H\"older's inequality, we obtain
\begin{align*}
 \EX \Big[\int_0^{T}  \left|\big\langle  B(u(s)),
 A^{\frac{1}{2}} T^2_\de u(s)\big\rangle \right|  \, ds\Big]^{p}
 \le
C \left[ \EX \,  {\rm ess}\sup_{[0,T]} \|u(s)\|^{2p}_\HH \right]^{1/2}
 \Big[
\EX  \Big(\int_0^{T} \!\!   \|u(s)\|^2  \, ds\Big)^{2p} \Big]^{1/4}\\
\times
\Big[ \EX  \Big(
 \int_0^{T} \!\!| A^{\frac{3}{4}} T^2_\de u(s)|^2  \, ds\Big)^{2p}\Big]^{1/4}
 \le
 C \Big[ \EX  \Big(
 \int_0^{T} \!\! | A^{\frac{3}{4}} T^2_\de u(s)|^2  \, ds\Big)^{2p}\Big]^{1/4}.
\end{align*}
Given  $u\in Dom(A^{\frac{3}{4}})$ we have
$|A^{\frac{3}{4}} T_\de^2 u| \to 0$ as $\delta\to 0$
while $|A^{\frac{3}{4}} T_\de^2 u| \le 2 |A^{\frac{3}{4}} u|$.
 Hence  the dominated convergence theorem yields
$ 
\EX \Big[\int_0^{T} \! \Big|\big\langle  B(u(s)),
 A^{\frac{1}{2}} T^2_\de u(s)\big\rangle \Big|  \, ds\Big]^{p} \to 0 $ as $\delta \to 0$.
A similar argument can be applied to the term
$ \int_0^{T} \! \left|\big\langle  R (u(s)) -
  G(u(s)) h(s),  A^{\frac{1}{2}} T^2_\de u(s)\big\rangle \right|  \, ds $.
Thus we obtain that \eqref{conv-cont} holds with $p>0$ small enough.
\end{proof}

\subsection{Examples of models}
In Remark \ref{re:appl} we have already shown that 
Theorems~\ref{th:WZ-appr} and \ref{th:support}
can be applied  to  periodic stochastic 2D Navier-Stokes equations  
and also to some  shell models of turbulence. 
The corresponding arguments involve either 
the additional symmetry  of the bilinear 
operator $B$  (see \eqref{Ns-per}) or some 
additional  regularity 
provided by the  discrete structure of shell type models.
These properties  are not true for other 2D hydrodynamical problems 
which we have in mind (see Section 2.1 in \cite{ch-mi}).
 However  the properties stated in (\ref{CApp}) and also in Conditions 
{\bf (BS+)} and {\bf (GR1)} provide us with another set of sufficient hypotheses
on the operators in (\ref{main-eq}) which guarantee the requirements
(i) and (ii) concerning solutions in Theorem~\ref{th:WZ-appr}.
 They allow us  to cover   several important
cases which include: 
\begin{itemize}
\item  2D Navier-Stokes equations   with
 Dirichlet boundary conditions,
\item 2D Boussinesq model for the B\'enard convection,
\item 2D  MHD equations and 2D magnetic B\'enard problem in bounded domains.
\end{itemize}
For more details concerning the models mentioned in this section
we refer to \cite{ch-mi} and to the references therein.
In all these cases
 a direct analysis
based on results of   interpolation of intersections (\cite{Tribel78})
makes it possible to prove that  $Dom(A^{1/4})$
is embedded  into $L_4$ type spaces 
and thus  (due to the considerations in 
\cite{ch-mi}) the basic hypotheses 
in
Condition  {\bf (B)} holds with $\HH=Dom(A^{1/4})$.
Thus we can apply 
Theorems~\ref{th:WZ-appr} and \ref{th:support} 
assuming the additional properties (\ref{CApp}), (\ref{s-sm1-4}) and 
(\ref{growth1}) concerning $R$, $G$, $\sigma$ and $\ts$.
\medskip

\noindent {\bf Acknowledgments:} We would like to thank 
anonymous referees for pointing
out references of related works on the Wong-Zakai approximation 
of  infinite dimensional stochastic evolution equations,  and for valuable remarks.

\end{document}